\renewcommand*{\eqref}[1]{%
	\hyperref[{#1}]{\textup{\tagform@{\!\!\ref*{#1}}}}%
}\makeatother %add when putting this file to arXiv
\theoremstyle{plain}
\newtheorem{theorem}{Theorem}[section]
\newtheorem{lemma}[theorem]{Lemma}
\newtheorem{proposition}[theorem]{Proposition}
\newtheorem{corollary}[theorem]{Corollary}
\theoremstyle{definition}
\newtheorem{definition}[theorem]{Definition}
\def\R{{\mathbb{R}}}
\def\i{{\rm i}}
\def\d{{\rm d}}
\def\<{{\langle}}
\def\>{{\rangle}}
\def\i{{\rm i}}
\def\d{{\rm d}}
\def\<{{\langle}}
\def\>{{\rangle}}
\title
[Uncertainty Principle and Schr\"{o}dinger Evolutions]{Dynamical versions of Morgan's Uncertainty Principle and Electromagnetic Schr\"{o}dinger Evolutions}
\author{Shanlin Huang \quad Zhenqiang Wang}
\address{Shanlin Huang,  School of Mathematics (Zhuhai), Sun Yat-sen University, Zhuhai 519082, Guangdong, China}
\email{shanlin\_huang@hust.edu.cn}
\address{Zhenqiang Wang, School of Mathematics and Statistics, Huazhong University of Science and Technology, Wuhan 430074, Hubei, China}
\email{zhq\_wang@hust.edu.cn}
\subjclass[2020]{35B60, 35J10, 35Q41}
\keywords{Electromagnetic potentials, unique continuation, uncertainty principle}
\thanks{}
\begin{document}
	
	\begin{abstract}
		This paper investigates the unique continuation properties of solutions of the
		electromagnetic Schr\"{o}dinger equation
		$$
		\i\partial_{t}u(x,t)+(\nabla-\i A)^{2}u(x,t)=V(x,t)u(x,t)\,\,\,\, \mbox{in} \,\,\,\mathbb{R}^{n}\times [0,1],
		$$
		where $A$ represents a time-independent magnetic vector potential and $V$ is a bounded, complex valued time-dependent potential. 
		Given $1<p<2$ and $1/p+1/q=1$, we prove that if
		\begin{equation*}
			\int_{\mathbb{R}^{n}}|u(x,0)|^{2}e^{2\alpha^{p}|x|^p/p}\ \d x
			+\int_{\mathbb{R}^{n}}|u(x,1)|^{2}e^{2\beta^{q}|x|^q/q}\ \d x
			<\infty,
		\end{equation*}
		for some $\alpha,\beta>0$ and there exists $N_{p}>0$ such that
		\begin{equation*}
			\alpha\beta>N_p, 
		\end{equation*}
		then $u\equiv 0$.
		These results can be interpreted as  dynamical versions of the uncertainty principle of Morgan's type.
		Furthermore, as an application, our results extend to a large class of semi-linear Schr\"{o}dinger equations.
	\end{abstract}
	
	\maketitle
	
	\section{Introduction}\label{section1}
	
	\subsection{Background and motivation}
	\
	
	The uncertainty principle, which states that a non-zero function and its Fourier transform cannot be simultaneously sharply localized, is ubiquitous in harmonic analysis. The Fourier transform is given by
	\begin{equation*}
		\hat{f}(\xi):=(2\pi)^{-\frac{n}{2}}\int_{\mathbb{R}^n}f(x)e^{-\i x\cdot\xi}\,\d x.
	\end{equation*}
	In \cite{M34},  Morgan proved the following uncertainty principle in one dimension: Given $1<p\leqslant 2$, if
	%$\frac{1}{p}+\frac{1}{q}=1$,, $\epsilon> 0$.
	\begin{equation}\label{Introduction-3}
		f(x)=O\Big(e^{-\frac{\alpha^p|x|^p}{p}}\Big)\,\,\,\mbox{and}\,\,\, \hat{f}(\xi)=O\Big(e^{-\frac{\beta^q|\xi|^q}{q}}\Big),\,\,\frac{1}{p}+\frac{1}{q}=1,\,\,\,\alpha, \beta>0,
	\end{equation}
	with 
	\begin{equation}\label{alpha-beta}
		\alpha\beta>\Big|\cos{\Big(\frac{p\pi}{2}\Big)}\Big|^{\frac{1}{p}},
	\end{equation}
	then $f\equiv 0$. Moreover, the above constant is sharp. Several related remarks are given as follows:
	\begin{itemize}
		\item [($\textbf{a}_1$)] The above theorem can be viewed as a generalization of the well-known Hardy uncertainty principle \cite{GHH33}, which corresponds to  $p=q=2$ in \eqref{Introduction-3}. In \cite{GS53}, Gel'fand and Shilov introduced the class $Z_p^p$ for $p \geqslant 1 $ as  the space of all functions $\varphi(z_1, \ldots, z_n)$ which are analytic for all values of $ z_1, \ldots, z_n \in \mathbb{C}$ such that
		$$|\varphi(z_1, \ldots, z_n)| \leqslant C_0 e^{\sum_{j=1}^{n} \epsilon_j C_j |z_j|^p},\,\,\,C_j>0,\,\, j = 0, 1, \ldots, n .$$
		Here $\epsilon_j = 1$ for $ z_j$ non-real and $\epsilon_j = -1$ for  $z_j$ real, with $j = 1, \ldots, n$.
		%the space of analytic functions satisfying similar conditions, and 
		They demonstrated that 
		\begin{equation}\label{Zpp}
			\widehat{Z_p^p}=Z_q^q,
		\end{equation}
		where $\frac{1}{p} + \frac{1}{q} = 1$, i.e., the  Fourier transform of the space $Z_p^p$ is the space $Z_q^q$.
		
		\item [($\textbf{a}_2$)] The following higher dimensional version is established in \cite[Theorem 1.4]{BDJ03}: Let $p\in (1,2)$, $1/p+1/q=1$ and $\alpha, \beta>0$. Assume that $f\in L^2(\R^n)$ and
		\begin{equation}\label{Introduction-5-6}
			\int_{\mathbb{R}^{n}}|f(x)|e^{\alpha^p|x_j|^p/p}\ \d x+\int_{\mathbb{R}^{n}}|\hat{f}(\xi)|e^{\beta^q|\xi_j|^q/q}\ \d\xi<\infty,
		\end{equation}
		for some $j\in \{1,2,\ldots,n\}$. Then $ f\equiv 0$ if \eqref{alpha-beta} holds. Moreover, the constant in \eqref{alpha-beta} is sharp.

		\item [($\textbf{a}_3$)]
		This result  is also related to the following Beurling-H\"{o}rmander uncertainty principle (see \cite{LH91} for $n=1$ and \cite{BDJ03} for higher dimensions): 
		\begin{equation*}\label{Introduction-5}
			f\in L^{2}(\mathbb{R}^{n})\,\,\,\mbox{and}\,\,\,\int_{\mathbb{R}^{n}}\int_{\mathbb{R}^{n}}|f(x)||\hat{f}(\xi)|e^{|x\cdot\xi|}\ \d x \ \d\xi<\infty\ \Longrightarrow \ f\equiv 0.
		\end{equation*}
		Indeed, it implies immediately the following result: 
		\begin{equation}\label{Introduction-6}
			\int_{\mathbb{R}^{n}}|f(x)|e^{\alpha^p|x|^p/p}\ \d x+\int_{\mathbb{R}^{n}}|\hat{f}(\xi)|e^{\beta^q|\xi|^q/q}\ \d\xi<\infty, \ \alpha \beta \geqslant 1\ \Longrightarrow \ f\equiv 0.
		\end{equation}
		It seems that the optimal constant in \eqref{Introduction-6} is still unknown.
		
		\item [($\textbf{a}_4$)]  We refer to \cite{Na93} and the monograph \cite{HJ} for further generalizations.
	\end{itemize}
	In the following, we call \eqref{Introduction-3}, as well its $L^1$-version, \eqref{Introduction-5-6} and \eqref{Introduction-6}, the uncertainty principle of Morgan's type.
	
	There exists an interesting dynamical interpretation of the aforementioned uncertainty principles (see e.g. in \cite{EKPV06}). More precisely, recall that the solution of the free Schr\"{o}dinger equation, with initial datum $u_0\in L^2$, satisfies 
	\begin{align}\label{intro-free}
		u(x,t):=&e^{\i t\Delta}u_{0}(x) \notag\\
		=&(4\pi \i t)^{-n/2}\int_{\mathbb{R}^{n}}e^{\i |x-y|^2/4t}u_{0}(y)\ \d y
		=(2\pi \i t)^{-n/2}e^{\i |x|^2/4t}\widehat{\big(e^{\i |\cdot|^2/4t}u_0\big)}\Big(\frac{x}{2t}\Big).
	\end{align}		
	Thus one can reformulate \eqref{Introduction-6} into the following uniqueness result for the free Schr\"{o}dinger equation: Assume that $T\ne 0$, 
	% if $u_0\in L^{1}(\mathbb{R})$ or $u_0\in L^{2}(\mathbb{R}^n)$ for $n\geqslant 2$, and for some $t\neq 0$,
	\begin{equation}\label{Introduction-7}
		\int_{\mathbb{R}^{n}}|u_0(x)|e^{\alpha^p|x|^p/p}\ \d x+\int_{\mathbb{R}^{n}}|u(x,T)|e^{\beta^q|x|^q/q(2T)^q}\ \d x<\infty,  \, \alpha \beta \geqslant 1\, \,   \Longrightarrow \, u_0\equiv 0.
	\end{equation}

	%Motivated by the above connection, 
	In a series of papers \cite{EKPV06,EKPV08,EKPV08-1,EKPV10,EKPV11},  Escauriaza, Kenig, Ponce and Vega generalized the uniqueness result in this direction for  Schr\"{o}dinger equation with  potentials, namely,
	\begin{equation}\label{Introduction-9}
		\partial_{t}u=\i (\Delta u+V(x,t)u) \,\,\,\, \mbox{in} \,\,\,\mathbb{R}^{n}\times [0,1].
	\end{equation}
	%see also \cite{Cowling10} associated with Schr\"{o}dinger evolutions, the following result was proved in
	
	For the dynamical versions of Hardy's uncertainty principles, it was proved in \cite{EKPV08-1} that the solutions of \eqref{Introduction-9} (with bounded potentials) satisfy the $L^2$ version of statement \eqref{Introduction-7} with parameters $p=q=2$, $T=1$, and under the stronger assumption $\alpha \beta>2$, specifically,
	\begin{equation*}\label{Introduction-777}
		\int_{\mathbb{R}^{n}}|u_0(x)|^2e^{\alpha^2|x|^2}\ \d x+\int_{\mathbb{R}^{n}}|u(x,1)|^2e^{\beta^2|x|^2/4}\ \d x<\infty,  \, \alpha \beta > 2\, \,   \Longrightarrow \, u_0\equiv 0.
	\end{equation*}
	Subsequently, this result was refined to the optimal condition $\alpha \beta>1$ in \cite{EKPV10}.
	
	In terms of the  dynamical versions of the Morgan type uncertainty principles,  it was proved in \cite[Corollary 1]{EKPV11} that  if $u\in C([0,1];\,\ L^{2}(\mathbb{R}^n))$ is a solution of \eqref{Introduction-9}, where $V=V(x,t)$ is complex-valued, bounded and
	\begin{equation*}
		\lim\limits_{R\to +\infty} \|V\|_{L^{1}([0,1];L^{\infty}(\mathbb{R}^{n}\setminus B_{R}))}=0.
	\end{equation*}
	Let $1<p<2$ and  $1/p+1/q=1$. If 
	\begin{equation}\label{Intro-integral}
		\int_{\mathbb{R}^{n}}|u(x,0)|^2e^{\alpha^p|x|^p/p}\ \d x
		+\int_{\mathbb{R}^{n}}|u(x,1)|^2e^{\beta^q|x|^q/q}\ \d x<\infty,
	\end{equation}
	%\begin{nonumbertheorem}{\cite[Corollary 1]{EKPV11}} (that is $\|V\|_{L^{\infty}(\mathbb{R}^{n}\times [0,1])}\leqslant C$) 
	and  there exists $N_p>0$ such that
	\begin{equation}\label{Introduction-10}
		\alpha \beta>N_p,
	\end{equation}
	then $u\equiv0$.

	% and there exist $\alpha, \beta>0$ such that
	
	\textbf{Aim and Motivation.}
	It is quite natural to explore how magnetic potentials affect the unique continuation properties for the equation of the form
	\begin{equation}\label{Introduction-1}
		\i \partial_{t}u+\Delta_{A}u=Vu,
	\end{equation}
	where $V=V(x,t): \mathbb{R}^{n}\times [0,1]\rightarrow \mathbb{C}$ and
	\begin{align*}
		\Delta_{A}:=\nabla_{A}^{2}, \ \ \nabla_{A}:=\nabla-\i A, \ \ A=A(x):\mathbb{R}^{n}\rightarrow \mathbb{R}^{n}.
	\end{align*} 
	% We are partially motivated by the existing fact for the magnetic-free case $A\equiv\!0$ by Escauriaza, Kenig, Ponce, and Vega in \cite{EKPV11}. Earlier than \cite{EKPV11}, they studied the magnetic-free case through purely real analytical methods in recent years in the sequel of papers \cite{EKPV06, EKPV08, EKPV08-1, EKPV10} and later with Cowling in \cite{Cowling10}. Afterward, Fanelli, Guti\'{e}rrez, Ruiz and Vilela   inspired by the techniques in \cite{EKPV08-1}.a time-independent magnetic vector potential $A$
	Barcel\'{o} et al. \cite{BFGRV13} investigated a dynamical version of the Hardy uncertainty principle associated with equation \eqref{Introduction-1}. More precisely, they proved that if \eqref{Intro-integral}-\eqref{Introduction-10} hold with $p=q=2$ and $N_2=2$, then the solution $u$ vanishes. Later, this result was further improved by Cassano and Fanelli \cite{CF15} under the condition $N_2=1$, which is sharp.
	However, to the best of our knowledge, the general case $1<p<2$ has not been touched upon.
	%{\noindent \textbf{Aim.}} \ 
	The goal of this paper is to establish a dynamical version of the uncertainty principle of Morgan type associated with the electromagnetic Schr\"{o}dinger equation  given by \eqref{Introduction-1}.
	%which is based on logarithmic convexity properties of Schr\"{o}dinger equations. 
	
	It is worth mentioning that Escauriaza et al. also proved Hardy's uncertainty principle in the context of  the heat equation with potentials \cite{EKPV16}. Additionally, Fern\'{a}ndez-Bertolin \cite{Fer19} obtained the dynamical version of the Hardy uncertainty principle for the discrete Schr\"{o}dinger equations, see also \cite{Fer17}.
	We refer to the recent survey paper \cite{FM21} for further discussions.

	\subsection{Main results}
	\
	
	%Let $A=A(x)=(A_1(x),\ldots,A_n(x)): \mathbb{R}^{n}\rightarrow \mathbb{R}^{n}$ be a real magnetic potential, and
	%In dimension $n=2$, $B$ is identified with the scalar field $B\cong {\rm curl} \ A$ \ $=\partial_{x_2}A_{1}-\partial_{x_1}A_{2}$. The same identification holds in dimension $n=3$, where now ${\rm curl} \ A$ is a vector field.
	
	%We will now give sufficient conditions on solutions \emph{$u\equiv 0$} for the equation \eqref{Introduction-1} and $u_1\equiv u_2$ for the one \eqref{Introduction-2}, where the potential $A$ satisfies the following condition:
	The assumptions of the potentials $A$ and $V$ in \eqref{Introduction-1} are collected in the following condition.
	
	\textbf{Condition A.} Let $A=A(x)=(A_{1}(x),\ldots,A_{n}(x)):\mathbb{R}^{n}\rightarrow \mathbb{R}^{n}$ be a real magnetic potential and $V=V(x,t)$ be complex-valued.  The magnetic field, denoted by $B\in\mathcal{M}_{n\times n}(\mathbb{R})$, is the anti-symmetric gradient of $A$, i.e.,
	\begin{equation}\label{Introduction-11}
		B=B(x)=DA(x)-DA^{t}(x), \ \ \ B_{jk}(x)=\partial_{x_j}A_{k}(x)-\partial_{x_k}A_{j}(x).
	\end{equation} 
	Assume the following conditions are satisfied:
	
	\noindent \emph{(\romannumeral1) }  The integral 
	\begin{equation}\label{thm1.1-3}
		\int_{0}^{1}A(sx)\ \d s\in \mathbb{R}^{n}
	\end{equation}
	is well defined at almost every $x\in \mathbb{R}^{n}$.
	
	\noindent \emph{(\romannumeral2) } 
	%denote $B=\!B(x)\!=DA-DA^{t}\!\in \!\mathcal{M}_{n\times n}(\mathbb{R})$, $B_{jk}=\partial_{x_j}A_{k}-\partial_{x_k}A_{j}$.
	Assume that 
	\begin{equation}\label{M_B}
		\|x^{t}B\|_{L^{\infty}}=:M_B<\infty,   
	\end{equation}
	and for $e_{1}=(1,0,\ldots,0)$, it holds that
	\begin{equation}\label{thm1.1-4}
		e_{1}^{t}B(x)=0.
	\end{equation}
	\noindent \emph{(\romannumeral3) } 
	Define $\Psi(x):=x^{t}B(x)\in \mathbb{R}^{n}$, $\Theta(x):=\int_{0}^{1}\Psi(sx)\ \d s\in \mathbb{R}^{n}$. There exists $\varepsilon'_0, \varepsilon''_0>0$ such that
	\begin{equation}\label{thm1.1-5}
		\|\ |\Theta|^{2}\|_{L^{\infty}}\leqslant \varepsilon'_0,\,\,\, \|\partial_{x_j}\Theta_{j}\|_{L^{\infty}}\leqslant \varepsilon''_0,\,\,\, j=2,\ldots,n.
	\end{equation}
	\noindent \emph{(\romannumeral4) }
	In addition, we assume
	\begin{equation}\label{M_V}
		\|V\|_{L^{\infty}(\mathbb{R}^{n}\times [0,1])}=:M_V<\infty,   
	\end{equation}
	and
	\begin{equation}\label{thm1.1-2}
		\lim\limits_{R\to +\infty} \|V\|_{L^{1}([0,1];L^{\infty}(\mathbb{R}^{n}\setminus B_{R}))}=0.
	\end{equation} 
	We make the following observations concerning to \textbf{Condition A}:
	\begin{itemize}
		
		\item [($\textbf{b}_1$)] It has been proved in \cite[Proposition 2.6]{BFGRV13} that assumptions \eqref{thm1.1-3} and \eqref{M_B} ensure the self-adjointness of $\Delta_{A}$ in $L^2$.

		\item [($\textbf{b}_2$)] Condition \eqref{thm1.1-4}  plays a key role in demonstrating the linear exponential decay estimate (Lemma \ref{Interpolation}) and the Carleman inequality (Lemma \ref{carleman estimate}). These constitute the foundational tools for proving the main results. 
		
		\item [($\textbf{b}_3$)]  Assumptions \eqref{thm1.1-5}--\eqref{thm1.1-2}  impose certain boundedness constraints. It is important to note that these do not necessitate any smallness conditions on the  potentials $A$ and $V$.

	\end{itemize}

	The main result of this paper is as follows.

	\begin{theorem}\label{morgan for p-q}
		Let $n\geqslant 3$ and $1<p<2$. Assume that \textbf{Condition A}  holds. There exists some $N_{p}>0$ such that for any solution  $u\in C([0,1];L^{2}(\mathbb{R}^n))$  of \eqref{Introduction-1} that meets the following criteria for some positive constants $\alpha,\beta>0$:
		%Let $n\geqslant 3$, $u\in C([0,1];L^{2}(\mathbb{R}^n))$ is a solution of \eqref{Introduction-1}, with $V=V(x,t)$ complex-valued, bounded(that is, $\|V\|_{L^{\infty}(\mathbb{R}^{n}\times [0,1])}\leqslant C$) and
		\begin{equation}\label{cor1.1-1}
			\int_{\mathbb{R}^{n}}|u(x,0)|^{2}e^{2\alpha^{p}|x|^p/p}\ \d x
			+\int_{\mathbb{R}^{n}}|u(x,1)|^{2}e^{2\beta^{q}|x|^q/q}\ \d x
			<\infty,
		\end{equation}
		where $1/p+1/q=1$. If 
		\begin{equation}\label{cor1.1-2}
			\alpha\beta>N_p, 
		\end{equation}
		then $u\equiv 0$.
	\end{theorem}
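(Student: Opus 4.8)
The plan is to adapt the Escauriaza--Kenig--Ponce--Vega scheme for dynamical uncertainty principles to the electromagnetic setting, in three stages: a gauge reduction that absorbs the magnetic potential, an upper bound obtained by propagating the endpoint decay through the flow, and a rigidity lower bound coming from a Carleman inequality. First I would pass to the transversal (Poincar\'e) gauge by writing $u = e^{\i\varphi}w$ with $\varphi(x) = \int_0^1 x\cdot A(sx)\,\d s$. Since this magnetic gauge change is exact, $w$ solves $\i\partial_t w + \Delta_{\wtilde A}w = Vw$ with the \emph{same} electric potential $V$ and with $\wtilde A = A - \nabla\varphi$; a short computation identifies $\wtilde A$ with the vector field $\Theta(x) = \int_0^1 x^tB(sx)\,\d s$ of Condition~A(iii), so \eqref{thm1.1-5} makes $\|\wtilde A\|_{L^\infty}$ small and $\div\wtilde A$ bounded, while \eqref{thm1.1-4} forces the first row and column of $B$ to vanish and hence $\wtilde A_1\equiv 0$. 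Because $|e^{\i\varphi}|=1$, the two weighted integrals in \eqref{cor1.1-1} are unchanged, so it suffices to prove the theorem under the extra assumptions that $A$ is transversal with $A_1\equiv 0$, $\|A\|_{L^\infty}$ small, $\div A$ bounded, and $\|x^tB\|_{L^\infty}=M_B<\infty$.

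Next, the upper bound. Since $p>1$ and $q>2$, both $|x|^p$ and $|x|^q$ dominate $|x|$, so \eqref{cor1.1-1} yields $u(\cdot,0),u(\cdot,1)\in L^2(e^{2\lambda|x|}\,\d x)$ for every $\lambda>0$; Lemma~\ref{Interpolation} then propagates this to $\sup_{t\in[0,1]}\|e^{\lambda|x|}u(\cdot,t)\|_{L^2}<\infty$ for all $\lambda>0$. With this qualitative decay secured — enough to legitimize integrations by parts against truncated exponential weights — I would run the weighted energy method: conjugate the equation by $e^{\phi(x,t)}$, monitor $\frac{\d}{\d t}\int|e^{\phi}u|^2\,\d x$, and choose $\phi(x,t)$ as a convex-in-$t$ interpolation between $\alpha^p|x|^p/p$ at $t=0$ and $\beta^q|x|^q/q$ at $t=1$ (truncated at radius $R$, then $R\to\infty$). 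The magnetic error terms $2A\cdot\nabla$, $\div A$ and $x^tB\cdot\nabla$ produced by the conjugation are absorbed using $A_1\equiv 0$, the smallness of $\|A\|_{L^\infty}$ and $\|\div A\|_{L^\infty}$, and the bound $M_B$, while the electric term is handled via $M_V$ and the vanishing condition \eqref{thm1.1-2}. The output is a quantitative interior estimate: at some time $t_0\in(0,1)$ one has $\|e^{\theta|x|^r}u(\cdot,t_0)\|_{L^2}<\infty$ for some exponent $r>1$ with coefficient $\theta=\theta(\alpha,\beta)$ increasing with the product $\alpha\beta$, so that a nonzero solution would be forced to decay anomalously fast once $\alpha\beta$ is large.

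Then, the rigidity step. Lemma~\ref{carleman estimate} limits how fast a nontrivial solution can decay. Feeding the interior-time decay from the previous step together with the endpoint decay \eqref{cor1.1-1} into this Carleman inequality — with a weight adapted to the $|x|^p$ scaling and translated in the $e_1$ direction so as to exploit $e_1^tB=0$ (here $n\geq 3$ also enters), and with the Carleman parameter tending to infinity — bounds the $L^2$ mass of $u$ on a space--time region by a quantity that tends to $0$, provided $\alpha\beta$ exceeds a threshold $N_p$ determined by the constants in Lemmas~\ref{Interpolation}--\ref{carleman estimate} and by $M_V,M_B,\varepsilon_0',\varepsilon_0''$. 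This forces $u\equiv 0$; undoing the gauge transformation of the first step then gives the theorem.

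The hard part will be the propagation/upper-bound step. Unlike the Hardy case $p=q=2$, the Schr\"odinger flow has no conformal (Appell-type) symmetry matching the weights $|x|^p$ and $|x|^q$, so one cannot collapse the problem onto a single self-similar Gaussian profile; the interpolating weight $\phi(x,t)$ and the verification that the relevant quadratic form stays non-negative in $t$ must be engineered directly, and this is exactly where $e_1^tB=0$ (equivalently $A_1\equiv 0$ after gauging) and the boundedness/smallness in Condition~A(iii) are indispensable for controlling the magnetic commutators. A secondary point is that the $N_p$ produced this way is not the sharp Morgan constant $|\cos(p\pi/2)|^{1/p}$, and no sharpness is claimed.
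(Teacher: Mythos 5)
Your gauge reduction and your reliance on Lemma~\ref{Interpolation} and Lemma~\ref{carleman estimate} put you in the right neighborhood, but the central step of your proposal — ``conjugate by $e^{\phi(x,t)}$ with $\phi$ a convex-in-$t$ interpolation between $\alpha^p|x|^p/p$ and $\beta^q|x|^q/q$, and show the weighted $L^2$ norm is controlled in between'' — is exactly the log-convexity mechanism that the paper identifies as \emph{unavailable} when $p\neq 2$. The reason is structural, not technical: for the free flow, if $u(0)$ decays like $e^{-c|x|^p}$ then $u(t)$ for $t\neq 0$ decays only like $e^{-c'|x|^q}$, never like $e^{-c''|x|^p}$ again (this is the $Z_p^p\to Z_q^q$ phenomenon from remark~($\textbf{a}_1$)). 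So there is no single interior weight of the type you describe that a weighted energy estimate could preserve; you would find your quadratic form is not sign-definite, and no ``direct engineering'' of $\phi$ can fix it. This is a genuine gap, not a detail to be filled in.

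What the paper does instead, and what you have not reproduced, is a two-step evasion of that obstruction. First, Theorem~\ref{morgan for p-q} is \emph{not} attacked directly; it is reduced to the variant Theorem~\ref{morgan for p-p}, where \emph{both} endpoints carry the same power $|x|^p$, but at time~$1$ the coefficient is an arbitrary $k^m$ \emph{at the price of a constant growing like} $e^{2a_1 k^{m/(2-p)}}$. This reduction is just Young's inequality/$L^\infty$-optimization applied to $e^{2k^m|x|^p-2b|x|^q}$ in \eqref{pf-of-thm1.1-1}–\eqref{pf-of-cor1.2-2}. Second, once both endpoints carry $|x|^p$-weights, the Appell transform (Lemma~\ref{Appell trans}) \emph{is} applicable — contrary to your claim that no such symmetry can be used — with the $k$-dependent choice $\alpha=a_0^{1/p}$, $\beta=k^{m/p}$, which equalizes the two coefficients at $\gamma=(k^m a_0)^{1/2}$. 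Then the quantitative version of Lemma~\ref{Interpolation}, fed through the subordination identity
\begin{equation*}
e^{\gamma|x|^p/p}\simeq\int_{\mathbb{R}^n}e^{\gamma^{1/p}\lambda\cdot x-|\lambda|^q/q}\,|\lambda|^{n(q-2)/2}\,\d\lambda,
\end{equation*}
converts the linear exponential propagation into the uniform $|x|^p$-weighted bound $\sup_{[0,1]}\|e^{\gamma|x|^p}\tilde u(t)\|_{L^2}\leqslant c^*k^{c_{p,m}}e^{a_1 k^{m/(2-p)}}$. The $k$-growing right side is not a blemish — it is the whole point: the Carleman parameter is also set up to grow like $k^{m/(2-p)}$, and the theorem holds precisely when the Carleman gain dominates that growth, which translates into $a_0 a_1^{p-2}>M_p$ and hence $\alpha\beta>N_p$. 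Without the explicit $k$-dependent family, there is nothing for the Carleman inequality to beat, so your rigidity step, while correct in spirit, cannot close. In short: you need to (i) pass to the $p$-$p$ formulation with $k$-dependent constants, (ii) use a $k$-dependent Appell transform to collapse to matching weights, and (iii) use Lemma~\ref{Interpolation} \emph{quantitatively} through the subordination formula rather than merely as qualitative a priori decay — none of which appears in the proposal, and which cannot be replaced by the log-convexity argument you outlined.
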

	
	As a direct application of Theorem \ref{morgan for p-q}, we obtain the uniqueness of solutions for non-linear equations of the form 
	\begin{equation}\label{Introduction-2}
		\i\partial_{t}u+\Delta_{A}u=F(u,\bar{u}).
	\end{equation}
	
	\begin{corollary}\label{apply morgan}
		Let $n\geqslant 3$ and $1<p<2$. Assume that \textbf{Condition A}  holds and $F:\mathbb{C}^{2}\rightarrow \mathbb{C}, F\in C^{k}$, $F(0)=\partial_{u}F(0)=\partial_{\bar{u}}F(0)=0$, where $k\in \mathbb{Z}^{+}, k>n/2$. There exists some $N_p>0$ such that for any strong solutions $u_{1}, u_{2}\in C\big([0,1]; H^{k}(\mathbb{R}^{n})\big)$ of \eqref{Introduction-2} that meet the following criteria for some positive constants $\alpha,\beta>0$: 
		\begin{equation}\label{thm1.3-1}
			e^{\alpha^{p}|x|^p/p}(u_{1}(0)-u_{2}(0)),\,\, e^{\beta^{q}|x|^q/q}(u_{1}(1)-u_{2}(1))\in L^{2}(\mathbb{R}^{n}),
		\end{equation}
		where $1/p+1/q=1$. If
		\begin{equation}\label{thm1.3-2}
			\alpha\beta>N_p,
		\end{equation}
		then $u_{1}\equiv u_{2}$.
	\end{corollary}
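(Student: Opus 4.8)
The plan is to deduce Corollary~\ref{apply morgan} from Theorem~\ref{morgan for p-q} by linearising around the difference of the two solutions, so that the nonlinear term gets absorbed into an admissible time-dependent potential $V$ and \textbf{Condition~A} is reduced to the hypotheses already assumed. Set $w:=u_{1}-u_{2}\in C([0,1];H^{k}(\mathbb{R}^n))$. Subtracting the two instances of \eqref{Introduction-2} and applying the fundamental theorem of calculus along the segment $\theta\mapsto u_{2}+\theta w$, $\theta\in[0,1]$, one writes
\begin{equation*}
	F(u_{1},\bar u_{1})-F(u_{2},\bar u_{2})=a(x,t)\,w+b(x,t)\,\bar w,
\end{equation*}
with $a:=\int_{0}^{1}(\partial_{u}F)(u_{2}+\theta w,\overline{u_{2}+\theta w})\,\d\theta$ and $b:=\int_{0}^{1}(\partial_{\bar u}F)(u_{2}+\theta w,\overline{u_{2}+\theta w})\,\d\theta$. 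Defining $V:=a+b\,\bar w/w$ on $\{w\neq 0\}$ and $V:=a$ elsewhere, we see that $w\in C([0,1];L^{2}(\mathbb{R}^n))$ is a solution of the linear electromagnetic equation $\i\partial_{t}w+\Delta_{A}w=Vw$ of the form \eqref{Introduction-1}, with $|V|\leqslant|a|+|b|$ pointwise; the quotient $\bar w/w$ is harmless since $V$ only ever appears multiplied by $w$.

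The one substantive point is to verify that $V$ fulfils \textbf{Condition~A}~(\romannumeral4) (the parts of \textbf{Condition~A} concerning $A$ are hypotheses of the corollary and are untouched). Since $F\in C^{k}$ with $F(0)=\partial_{u}F(0)=\partial_{\bar u}F(0)=0$, on every ball $\{|z|\leqslant R_{0}\}$ we have $|\partial_{u}F(z,\bar z)|+|\partial_{\bar u}F(z,\bar z)|\leqslant C(R_{0})\,|z|$. Because $k>n/2$, the Sobolev embedding $H^{k}(\mathbb{R}^n)\hookrightarrow C_{0}(\mathbb{R}^n)$, combined with the continuity of $t\mapsto u_{i}(t)$ into $H^{k}$ and the compactness of $[0,1]$, shows that $u_{1},u_{2}$ are bounded on $\mathbb{R}^n\times[0,1]$ and that $\sup_{t\in[0,1]}|u_{i}(x,t)|\to 0$ as $|x|\to\infty$ (a compact subset of $C_{0}$ is uniformly decaying). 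Hence $a,b\in L^{\infty}(\mathbb{R}^n\times[0,1])$, so $M_{V}=\|V\|_{L^{\infty}}<\infty$, which is \eqref{M_V}, and $\sup_{t\in[0,1]}\bigl(|a(x,t)|+|b(x,t)|\bigr)\to 0$ as $|x|\to\infty$, so that
\begin{equation*}
	\|V\|_{L^{1}([0,1];L^{\infty}(\mathbb{R}^n\setminus B_{R}))}\leqslant\|V\|_{L^{\infty}((\mathbb{R}^n\setminus B_{R})\times[0,1])}\longrightarrow 0\quad\text{as }R\to\infty,
\end{equation*}
which is \eqref{thm1.1-2}.

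With this in place the conclusion is immediate: the weighted integrability \eqref{thm1.3-1} is exactly hypothesis \eqref{cor1.1-1} written for $w$, and \eqref{thm1.3-2} is \eqref{cor1.1-2}, so Theorem~\ref{morgan for p-q} (with the same constant $N_{p}$) applies to $w$ and forces $w\equiv 0$, i.e. $u_{1}\equiv u_{2}$. I expect the only genuinely delicate step to be the decay at spatial infinity of the linearised potential $V$: it rests on $F$ vanishing to second order at the origin, so that $\partial_{u}F$ and $\partial_{\bar u}F$ are controlled by $|u_{i}|$, together with the fact that $H^{k}$-functions with $k>n/2$ lie in $C_{0}$ and therefore decay uniformly in $t\in[0,1]$. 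The remaining bookkeeping—boundedness of $V$, continuity of $w$ into $L^{2}$, and matching the weighted conditions—is routine.
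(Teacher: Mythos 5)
Your proof is correct and follows essentially the same route as the paper's: the paper also sets $u=u_1-u_2$, defines $V=\bigl(F(u_1,\bar u_1)-F(u_2,\bar u_2)\bigr)/(u_1-u_2)$, and invokes Theorem \ref{morgan for p-q}, leaving the verification of \textbf{Condition~A}~(iv) for $V$ implicit. You supply exactly the missing bookkeeping: the Hadamard-type decomposition $F(u_1,\bar u_1)-F(u_2,\bar u_2)=aw+b\bar w$ together with the Sobolev embedding $H^k\hookrightarrow C_0$ (for $k>n/2$) and compactness of $t\mapsto u_i(t)$ in $H^k$, which gives both $\|V\|_{L^\infty}<\infty$ and the decay \eqref{thm1.1-2}; the observation $|V|\leqslant|a|+|b|$ on $\{w\neq 0\}$ because $|\bar w/w|=1$ is the right way to justify boundedness of the paper's quotient.
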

	
	Several remarks on Theorem \ref{morgan for p-q} and Corollary \ref{apply morgan} are as follows:
	\begin{itemize}
		\item [($\textbf{c}_1$)]As far as we are aware, Theorem \ref{morgan for p-q} seems to be the first work to give the connection between unique continuation properties of electromagnetic Schr\"{o}dinger equation \eqref{Introduction-1} and the Morgan uncertainty principle. In the absence of a magnetic field ($A\equiv 0$), Theorem \ref{morgan for p-q} was established in \cite{EKPV11}; whereas for electromagnetic Schr\"{o}dinger equation \eqref{Introduction-1}, uniqueness results of Hardy type  were proved in \cite{BFGRV13,CF15}.
		%It deserves mentioning that 
		
		\item [($\textbf{c}_2$)] we do not provide an estimate of the universal constant $N_{p}$. It is worth noting that, even for the free Schr\"{o}dinger equation,  the optimal value of $N_p$ seems unknown in higher dimensions $n\geqslant 2$ (see remark ($\textbf{a}_3$)).
		%one-dimensional condition $\alpha\beta>|\cos(p\pi/2)|^{1/p}$ established in \cite{M34} (see also \cite{BDJ03}) is unknown in higher dimensions $n\geqslant 2$.
		We also remark that we cannot prove the result in dimension $n=2$. This inability stems from our heavy reliance on condition \eqref{thm1.1-4}; however,  there are no $2\times2$ anti-symmetric matrices with non-trivial kernel.
		
		\item [($\textbf{c}_3$)] We emphasize  that the proof strategy for Theorem \ref{morgan for p-q} is quite different from those employed in the Hardy type results presented  in \cite{BFGRV13,CF15}. Those results fundamentally depend on the logarithmic convexity of the quantity $H(t)=\|e^{g(x, t)}u(t)\|_{L^2}$, where $g$ is some suitable weight function, quadratically growth with respect to $x$. This property ensures that a Gaussian decay observed at times $0$ and $1$ is preserved for intermediate times. However, this phenomenon generally fails when $g(x,t)\approx|x|^p$ for $p\ne 2$. Indeed,  for the free Schr\"{o}dinger evolution, if $u(0)\in Z_p^p$ (see remark ($\textbf{a}_1$)) for some $1<p<2$, then by the formular \eqref{intro-free}, $u(t)\in Z_q^q$ holds for any $t\ne 0$ with $1/p+1/q=1$. Consequently, the solution cannot maintain the decay rate $e^{-c|x|^p}$ uniformly over the interval $0\leqslant t\leqslant 1$.
		Instead, our approach is inspired by the linear exponential decay estimate for the operator $\i\partial_t+\Delta$  established in \cite{KPV03}. We aim to establish a similar upper bound for the magnetic case $\i\partial_t+\Delta_{A}$ (see Lemma \ref{Interpolation}), a result that may hold independent interest. Then we incorporate  ideas from \cite{EKPV11} by combining the lower bounds on solutions with a  Carleman inequality for  $\i\partial_t+\Delta_{A}$ (see Lemma \ref{carleman estimate}).
		%is as follows: We apply Lemma \ref{Interpolation}(the magnetic-free case was established in \cite{KPV03} firstly) to obtain solutions and potentials, whose upper bounds depend on $k\in \mathbb{Z}^{+}$. Finally, by combining the lower bounds on solutions and Carleman estimates, we obtain the final result.
		
	\end{itemize}

	{\noindent \textbf{Plan of the paper.}} \ The rest of the paper is organized as follows:
	In Section \ref{Preliminaries}, we present some preliminary results; 
	In Section \ref{proof of theorem}, we establish the  linear exponential decay estimates for solutions of the electromagnetic Schr\"{o}dinger equation \eqref{gauge1-0} below and the Carleman inequality for the magnetic operator $\i\partial_t+\Delta_{A}$; In Section \ref{proof of corollary and theorem}, we prove Theorem \ref{morgan for p-q} and Corollary \ref{apply morgan}.

	\section{Preliminaries}\label{Preliminaries}
	
	In this section, we present some tools and  preliminary results that will be used in the proofs of the main results.
	More precisely, consider
	\begin{equation}\label{gauge1-0}
		\partial_{t}u=\i(\Delta_{A}u+V(x,t)u+F(x,t)),
	\end{equation}
	where $A=A(x,t): \mathbb{R}^{n}\times [0,1]\rightarrow \mathbb{R}^{n}$, $V(x,t), F(x,t): \mathbb{R}^{n}\times [0,1]\rightarrow \mathbb{C}$.
	The above equation exhibits gauge invariance in the following sense:  Suppose $u$
	is a solution to \eqref{gauge1-0}. Define $\tilde{A}=A+\nabla\varphi$, where $\varphi=\varphi(x):\mathbb{R}^{n}\rightarrow \mathbb{R}$. Under this transform, the function $\tilde{u}=e^{\i\varphi}u$ is a solution to
	\begin{equation*}
		\partial_{t}\tilde{u}=\i(\Delta_{\tilde{A}}\tilde{u}+V(x,t)\tilde{u}+e^{\i\varphi}F(x,t)).
	\end{equation*}
	\subsection{The Cronstr\"{o}m gauge}\label{Cronst gauge}
	\
	
	%Now we shall give the Cronstr\"{o}m gauge(also called Poincar${\rm \acute{e}}$ gauge or transversal gauge) to simplify the computations below.
	\begin{definition}\label{def-crons}
		A connection $\nabla-\i A(x)$ is said to be in the Cronstr\"{o}m gauge (alternatively referred to as  Poincar${\rm \acute{e}}$ gauge or transversal gauge) if the vector potential $A$  is orthogonal to the position vector $x$ for all $x\in \R^n$, i.e., 
		$$A(x)\cdot x=0,\,\,\, \mbox{for all}\,\,\,  x\in \R^n.$$
	\end{definition}
	
	The following result comes from \cite[Lemma 2.2 and Corollary 2.3]{BFGRV13}. 
	\begin{lemma}\label{Cronstrom gauge}
		Let $A=A(x)=(A_1(x),\ldots,A_n(x)):\mathbb{R}^{n}\rightarrow \mathbb{R}^{n}$, for $n\geqslant 2$, and denote $B=DA-DA^{t}\in \mathcal{M}_{n\times n}(\mathbb{R})$, 
		$B_{jk}={\partial_{x_j}A_k}-{\partial_{x_k}A_j}$, and $\Psi(x):=x^{t}B(x)\in \mathbb{R}^{n}$. Assume that the two vector quantities
		\begin{equation}\label{gauge1-1}
			\int_{0}^{1}A(sx)\ \d s\in \mathbb{R}^{n}, \ \ \ \int_{0}^{1}\Psi(sx)\ \d s\in \mathbb{R}^{n}
		\end{equation}
		are finite for almost every $x\in \mathbb{R}^{n}$; moreover, define the function
		\begin{equation}\label{gauge1-2}
			\varphi(x):=-x\cdot\int_{0}^{1}A(sx)\ \d s\in \mathbb{R}.
		\end{equation}
		Then the following two identities hold:
		\begin{align}
			\label{gauge1-3}\tilde{A}(x):=A(x)+\nabla \varphi(x)&=-\int_{0}^{1}\Psi(sx)\ \d s,\\
			\label{gauge1-4}x^{t}D\tilde{A}(x)&=-\Psi(x)+\int_{0}^{1}\Psi(sx)\ \d s.
		\end{align}
		In particular, we have
		\begin{equation}\label{gauge1-5}
			x\cdot \tilde{A}(x)=0, \ \ \ x\cdot x^{t}D\tilde{A}(x)=0.
		\end{equation}
	\end{lemma}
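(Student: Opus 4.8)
\textbf{Proof proposal for Lemma \ref{Cronstrom gauge}.}

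The plan is to verify the three displayed identities \eqref{gauge1-3}, \eqref{gauge1-4}, \eqref{gauge1-5} by direct computation, treating \eqref{gauge1-5} as an immediate corollary of the first two. Throughout, the hypothesis \eqref{gauge1-1} guarantees that the integrals defining $\varphi$ and $\tilde A$ converge for a.e.\ $x$, so all manipulations below are justified pointwise a.e. Since $\varphi(x) = -x\cdot\int_0^1 A(sx)\,\d s = -\int_0^1 x\cdot A(sx)\,\d s$, the heart of the matter is to differentiate this expression. I would first record the scaling identity: for fixed $x$, the map $s\mapsto x\cdot A(sx)$ has derivative $\frac{\d}{\d s}\big(x\cdot A(sx)\big) = x^t DA(sx)\,x$; equivalently, writing $g(x):=x\cdot A(x)$ one has $x\cdot\nabla g(x) = x\cdot A(x) + x^t DA(x)\,x$. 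This is the kind of Euler-type relation that makes the transversal gauge work.

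Next I would compute $\nabla\varphi$. Differentiating under the integral sign in $\varphi(x) = -\int_0^1 x\cdot A(sx)\,\d s$ and using $\partial_{x_k}\big(x\cdot A(sx)\big) = A_k(sx) + s\,x^t\partial_k A(sx) = A_k(sx) + s\,(x^t DA(sx))_k$, I get
\begin{equation*}
	\partial_{x_k}\varphi(x) = -\int_0^1\Big(A_k(sx) + s\,(x^t DA(sx))_k\Big)\,\d s.
\end{equation*}
Hence $\tilde A(x) = A(x) + \nabla\varphi(x) = A(x) - \int_0^1 A(sx)\,\d s - \int_0^1 s\,x^t DA(sx)\,\d s$. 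The remaining task is to recognize the right-hand side as $-\int_0^1 \Psi(sx)\,\d s = -\int_0^1 (sx)^t B(sx)\,\d s$. Using $B = DA - DA^t$, one has $(sx)^t B(sx) = s\,x^t DA(sx) - s\,(DA^t(sx))\,x$, i.e.\ the $j$-th component is $s\big((x^t DA(sx))_j - (DA(sx)\,x)_j\big) = s\big(\sum_k x_k\partial_j A_k(sx) - \sum_k x_k \partial_k A_j(sx)\big)$. The second sum telescopes: $\frac{\d}{\d s}\big(A_j(sx)\big) = \sum_k x_k\partial_k A_j(sx)$, so $\int_0^1 s\sum_k x_k\partial_k A_j(sx)\,\d s = \int_0^1 s\,\frac{\d}{\d s}A_j(sx)\,\d s = A_j(x) - \int_0^1 A_j(sx)\,\d s$ after an integration by parts in $s$. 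Substituting this into the expression for $-\int_0^1\Psi(sx)\,\d s$ and comparing termwise with the formula for $\tilde A$ above yields \eqref{gauge1-3}.

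For \eqref{gauge1-4}, I would apply the same Euler-relation machinery to $\tilde A$: from \eqref{gauge1-3}, $x^t D\tilde A(x)$ is obtained by differentiating $x\mapsto -\int_0^1\Psi(sx)\,\d s$ and contracting with $x$; using $\frac{\d}{\d s}\Psi(sx) = $ (derivative of $s\mapsto (sx)^tB(sx)$) together with the $1$-homogeneity structure of $\Psi$ after the $sx$ substitution, one again integrates by parts in $s$ to collapse the integral, arriving at $x^tD\tilde A(x) = -\Psi(x) + \int_0^1\Psi(sx)\,\d s$. Finally \eqref{gauge1-5} follows at once: contracting \eqref{gauge1-3} with $x$ gives $x\cdot\tilde A(x) = -\int_0^1 x^t(sx)^tB(sx)\,\d s$, and since $B$ is antisymmetric each integrand $x^t(sx)^tB(sx) = \frac{1}{s}(sx)^tB(sx)(sx)\cdot\frac{1}{s}$... more cleanly, $y^tB(y)y = 0$ for any $y$ by antisymmetry, so with $y = sx$ the integrand vanishes identically; similarly contracting \eqref{gauge1-4} with $x$ gives $x^t x^tD\tilde A(x) = -x^t\Psi(x) + \int_0^1 x^t\Psi(sx)\,\d s = -x^tB(x)x\cdot(\text{stuff}) = 0$ again by antisymmetry of $B$ (note $x\cdot\Psi(x) = x^tB(x)x = 0$ and likewise $x\cdot\Psi(sx) = 0$ since $x$ and $sx$ are parallel). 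The main obstacle here is purely bookkeeping: getting the integration-by-parts-in-$s$ boundary terms and the antisymmetrization of $DA$ to line up correctly; there is no analytic difficulty beyond the a.e.\ finiteness already assumed in \eqref{gauge1-1}, and since this is quoted verbatim from \cite[Lemma 2.2 and Corollary 2.3]{BFGRV13}, one may alternatively just cite that reference.
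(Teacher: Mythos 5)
Your argument is correct in substance, and it is a genuinely different route from the paper's: the paper offers no proof at all, quoting the statement verbatim from \cite[Lemma 2.2 and Corollary 2.3]{BFGRV13}, whereas you verify it directly by differentiating under the integral, applying the fundamental theorem of calculus to $s\mapsto sA_j(sx)$, integrating by parts in $s$, and using antisymmetry of $B$ for \eqref{gauge1-5}. Your treatment of \eqref{gauge1-4} also works as sketched (no homogeneity of $\Psi$ is needed, only the chain rule identity $\sum_j x_j\partial_{x_j}[\Psi_k(sx)]=s\,\tfrac{\d}{\d s}\Psi_k(sx)$ followed by one more integration by parts, which reproduces $-\Psi(x)+\int_0^1\Psi(sx)\,\d s$ consistently with \eqref{gauge1-3}), and the contraction argument for \eqref{gauge1-5} is fine. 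The only analytic caveat, differentiation under the integral sign, needs $A$ locally $C^1$ (or a distributional reading), which is implicit here exactly as in \cite{BFGRV13}; the self-contained computation is what the citation buys the paper for free, while your version makes the mechanism visible.

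One bookkeeping point you should repair, since you flagged bookkeeping as the main risk: with the paper's displayed convention $B_{jk}=\partial_{x_j}A_k-\partial_{x_k}A_j$ together with $B=DA-DA^t$, necessarily $(DA)_{jk}=\partial_{x_j}A_k$, so $(x^tDA)_k=\sum_j x_j\partial_{x_j}A_k$ and $(DA\,x)_k=\sum_j x_j\partial_{x_k}A_j$; your formulas swap these two, i.e.\ you are silently using the Jacobian convention $(DA)_{jk}=\partial_{x_k}A_j$, equivalently reading $\Psi$ as $Bx$ rather than as the row vector $x^tB$. Under your reading the termwise comparison closes exactly as stated; under the literal row-vector reading the same computation gives
\begin{equation*}
\tilde A_k(x)=A_k(x)-\int_0^1 A_k(sx)\,\d s-\int_0^1 s\sum_j x_j(\partial_{x_k}A_j)(sx)\,\d s=\int_0^1 s\sum_j x_j B_{jk}(sx)\,\d s=+\int_0^1\Psi_k(sx)\,\d s,
\end{equation*}
with the opposite sign; a quick check with $A(x)=(-x_2,0)$ in $\R^2$ gives $\varphi=x_1x_2/2$, $\tilde A=(-x_2/2,\,x_1/2)$ and $x^tB=(-x_2,\,x_1)$, confirming the plus sign in that convention. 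Since $\Psi\mapsto-\Psi$ flips \eqref{gauge1-3} and \eqref{gauge1-4} together and leaves \eqref{gauge1-5} (and every later use, which involves only $x\cdot\tilde A=0$, $|\Theta|$, $\|x^tB\|_{L^\infty}$) unchanged, this is a conventions issue rather than a mathematical error, but you should state explicitly which convention for $DA$ (hence for $\Psi$) you are using so that the signs in your termwise comparison are airtight.
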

	
	\subsection{The Appell transformation}
	\ 
	
	To simplify the analysis, we consider the scenario where the initial and final states, $u(0)$ and $u(1)$, exhibit identical Gaussian decay properties. This is achieved by applying the following Appell transformation \eqref{Appell1-2},  also known as a pseudo-conformal transformation (see \cite[Lemma 2.7]{BFGRV13}).
	\begin{lemma}\label{Appell trans}
		Let $A=A(y,s)=(A_1(y,s),\ldots,A_n(y,s)):
		\mathbb{R}^{n+1}\rightarrow \mathbb{R}^{n}$, $V=V(y,s)$, $F=F(y,s):\mathbb{R}^{n}\times [0,1]\rightarrow \mathbb{C}$,
		$u=u(y,s):\mathbb{R}^{n}\times [0,1]\rightarrow \mathbb{C}$ be a solution to
		\begin{equation}\label{Appell1-1}
			\partial_{s}u=\i(\Delta_{A}u+V(y,s)u+F(y,s))\,\,\,\, \mbox{in} \,\,\,\mathbb{R}^{n}\times [0,1],
		\end{equation}
		and define, for any $\alpha,\beta>0$, the function
		\begin{equation}\label{Appell1-2}
			\tilde{u}(x,t):=\left(\frac{\sqrt{\alpha\beta}}{\alpha(1-t)+\beta t}\right)^{\frac{n}{2}}
			u\left(\frac{\sqrt{\alpha \beta}x}{\alpha(1-t)+\beta t}, \frac{\beta t}{\alpha(1-t)+\beta t}\right)e^{\frac{(\alpha-\beta)|x|^{2}}{4\i(\alpha(1-t)+\beta t)}}.
		\end{equation}
		Then $\tilde{u}$ is a solution to
		\begin{equation}\label{Appell1-3}
			\partial_{t}\tilde{u}=\i\bigg(\Delta_{\tilde{A}}\tilde{u}+\frac{(\alpha-\beta)\tilde{A}\cdot x}{\alpha(1-t)+\beta t}\tilde{u}+\tilde{V}(x,t)\tilde{u}+\tilde{F}(x,t)\bigg) \,\,\,\, \mbox{in} \,\,\, \mathbb{R}^{n}\times [0,1],
		\end{equation}
		where
		\begin{align}
			\label{Appell1-4}\tilde{A}(x,t)&=\frac{\sqrt{\alpha\beta}}{\alpha(1-t)+\beta t}A\left(\frac{\sqrt{\alpha \beta}x}{\alpha(1-t)+\beta t}, \frac{\beta t}{\alpha(1-t)+\beta t}\right),\\
			\label{Appell1-5}\tilde{V}(x,t)&=\frac{\alpha\beta}{(\alpha(1-t)+\beta t)^{2}}V\left(\frac{\sqrt{\alpha \beta}x}{\alpha(1-t)+\beta t}, \frac{\beta t}{\alpha(1-t)+\beta t}\right),\\
			\label{Appell1-6}\tilde{F}(x,t)&=\left(\frac{\sqrt{\alpha\beta}}{\alpha(1-t)+\beta t}\right)^{\frac{n}{2}+2}F\left(\frac{\sqrt{\alpha \beta}x}{\alpha(1-t)+\beta t}, \frac{\beta t}{\alpha(1-t)+\beta t}\right)e^{\frac{(\alpha-\beta)|x|^{2}}{4\i(\alpha(1-t)+\beta t)}}.
		\end{align}
	\end{lemma}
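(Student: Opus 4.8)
The statement is a direct (if lengthy) verification that the map \eqref{Appell1-2} is the magnetic pseudo-conformal transformation intertwining \eqref{Appell1-1} with \eqref{Appell1-3}, so the plan is essentially to set up good notation and then differentiate and match. I would abbreviate $\lambda(t):=\alpha(1-t)+\beta t$, $\mu(t):=\sqrt{\alpha\beta}/\lambda(t)$, $y:=\mu(t)x$, $s:=\beta t/\lambda(t)$ and $\rho(x,t):=(\alpha-\beta)|x|^{2}/\big(4\i\lambda(t)\big)$, so that $\tilde u=\mu^{n/2}e^{\rho}\,u(y,s)$ and, by \eqref{Appell1-4}, $\tilde A(x,t)=\mu(t)\,A(y,s)$. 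The first task is to record the elementary differentiation identities
$$\frac{\d s}{\d t}=\frac{\alpha\beta}{\lambda^{2}}=\mu^{2},\qquad \frac{\mu'}{\mu}=\frac{\alpha-\beta}{\lambda},\qquad \partial_{x_j}y_k=\mu\,\delta_{jk},\qquad \partial_t y_k=\frac{(\alpha-\beta)\mu}{\lambda}\,x_k,$$
together with $\partial_{x_j}\rho=\frac{(\alpha-\beta)x_j}{2\i\lambda}$, $\sum_j\partial^2_{x_j}\rho=\frac{n(\alpha-\beta)}{2\i\lambda}$, $\sum_j(\partial_{x_j}\rho)^2=-\frac{(\alpha-\beta)^2|x|^2}{4\lambda^2}$ and $\partial_t\rho=\frac{(\alpha-\beta)^2|x|^2}{4\i\lambda^2}$. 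Since $\d s/\d t=\mu^{2}>0$ with $s(0)=0$ and $s(1)=1$, the variable $s$ sweeps $[0,1]$ monotonically as $t$ does, so \eqref{Appell1-3} is genuinely posed on $\R^{n}\times[0,1]$ whenever \eqref{Appell1-1} is; the formal manipulations below are justified in the function class in which $u$ is a solution, and $\tilde u$ inherits the same regularity.

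The core of the argument is the magnetic covariance of the transform. Using $\partial_{x_j}\big(u(y,s)\big)=\mu\,(\partial_{y_j}u)(y,s)$ and $\tilde A_j=\mu A_j(y,s)$, one finds
$$(\partial_{x_j}-\i\tilde A_j)\tilde u=\mu^{n/2}e^{\rho}\big[\mu\,(\partial_{y_j}-\i A_j)u+(\partial_{x_j}\rho)\,u\big].$$
Applying $\partial_{x_j}-\i\tilde A_j$ a second time, expanding and summing over $j$, the purely second-order contributions reassemble into $\mu^{2}\Delta_{A}u$ (here one uses $\partial_{x_j}\big[(\partial_{y_j}-\i A_j)u\big]=\mu\,\partial_{y_j}\big[(\partial_{y_j}-\i A_j)u\big]$), while the first-order cross terms collapse by the elementary identity $(\partial_{y_j}-\i A_j)u+\partial_{y_j}u-\i A_j u=2(\partial_{y_j}-\i A_j)u$, yielding
$$\Delta_{\tilde A}\tilde u=\mu^{n/2}e^{\rho}\Big[\mu^{2}\Delta_{A}u+2\mu\,\textstyle\sum_j(\partial_{x_j}\rho)(\partial_{y_j}-\i A_j)u+\big(\textstyle\sum_j\partial^2_{x_j}\rho\big)u+\big(\textstyle\sum_j(\partial_{x_j}\rho)^2\big)u\Big],$$
and substituting the identities above turns the middle term into $\frac{(\alpha-\beta)\mu}{\i\lambda}\,x\cdot\nabla_{A}u$ with $\nabla_{A}=\nabla_y-\i A$.

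Independently, differentiating $\tilde u=\mu^{n/2}e^{\rho}u(y,s)$ in $t$ by the chain rule and inserting $\partial_s u=\i(\Delta_{A}u+Vu+F)$ gives
$$\partial_t\tilde u=\mu^{n/2}e^{\rho}\Big[\tfrac{n}{2}\tfrac{\mu'}{\mu}\,u+(\partial_t\rho)\,u+\i\mu^{2}(\Delta_{A}u+Vu+F)+\tfrac{(\alpha-\beta)\mu}{\lambda}\,x\cdot\nabla_y u\Big].$$
Forming $\partial_t\tilde u-\i\Delta_{\tilde A}\tilde u$ and using the first-paragraph identities, four cancellations occur: the $\i\mu^{2}\Delta_{A}u$ terms cancel; $\tfrac{n}{2}\tfrac{\mu'}{\mu}=\tfrac{n(\alpha-\beta)}{2\lambda}$ cancels the $\sum_j\partial^2_{x_j}\rho$ term; $\partial_t\rho$ cancels the $\sum_j(\partial_{x_j}\rho)^2$ term; and the two first-order terms combine via $\nabla_y u-\nabla_{A}u=\i A u$ into $\i\tfrac{(\alpha-\beta)\mu}{\lambda}(A\cdot x)u=\i\tfrac{(\alpha-\beta)}{\lambda}(\tilde A\cdot x)u$. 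What survives is
$$\partial_t\tilde u-\i\Delta_{\tilde A}\tilde u=\mu^{n/2}e^{\rho}\Big[\i\mu^{2}V(y,s)\,u+\i\mu^{2}F(y,s)+\i\tfrac{\alpha-\beta}{\lambda}(\tilde A\cdot x)\,u\Big],$$
and recognizing $\mu^{2}V(y,s)=\tilde V(x,t)$, $\mu^{n/2+2}e^{\rho}F(y,s)=\tilde F(x,t)$ and $\mu^{n/2}e^{\rho}u=\tilde u$ (by \eqref{Appell1-5}, \eqref{Appell1-6}, \eqref{Appell1-2}) gives exactly \eqref{Appell1-3}.

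No step is conceptually hard; essentially all of the work sits in the second paragraph, where one must differentiate a composed function twice through the magnetic covariant derivative while simultaneously tracking the chain-rule factors $\mu$, the phase derivatives of $\rho$, and the scaling law $\tilde A=\mu A(y,s)$. The only real risk is a bookkeeping slip there — confirming that the first-order cross terms truly collapse, and that the extra drift term $\tfrac{(\alpha-\beta)\tilde A\cdot x}{\lambda}\tilde u$ in \eqref{Appell1-3} is precisely the residue of the mismatch between $\nabla_y$ and $\nabla_{A}$ — after which the matching in the last paragraph is automatic.
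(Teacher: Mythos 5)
Your verification is correct: all the chain-rule identities, the magnetic-covariance computation of $\Delta_{\tilde A}\tilde u$, and the final cancellations (including the residual drift term $\frac{(\alpha-\beta)\tilde A\cdot x}{\alpha(1-t)+\beta t}\tilde u$ arising from $\nabla_y u-\nabla_A u=\i A u$) check out and reproduce \eqref{Appell1-3} exactly. The paper itself gives no proof but simply cites \cite[Lemma 2.7]{BFGRV13}, and your direct computation is essentially the same verification carried out there, so there is nothing further to add.
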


	\subsection{Weighted estimates for $\nabla_{A}$}\,
	
	%Next, we shall give a crucial estimate by means of symmetric and skew-symmetric operators, and the choice of the weight function.
	
	We first recall the following abstract lemma.
	%$$
	%(f,g)=\int_{\R^n}f\bar{g}\ \d x,\,\,\, \|f\|_{L^2}^{2}=(f,f),\,\,\,\|f\|_{L^{\infty}}=\|f\|_{L^{\infty}(\R^n)}\,\,\,\mbox{and}\,\,\, \|V\|_{L^{\infty}}=\|V\|_{L^{\infty}(\R^n\times [0,1])}.
	%$$
	\begin{lemma}\label{log convex}
		Suppose that $\mathcal{S}$ is a symmetric operator, $\mathcal{A}$ is skew-symmetric, both are allowed to depend smoothly on the time variable; $G$ is a positive function, $f(x,t)$ lie in $C^{\infty}([0,1], \mathscr{S}(\R^n))$,
		\begin{equation*}
			H(t)=(f,f)\footnote{Here $(f,g)=\int_{\R^n}f\bar{g}\ \d x$.},\ \ \ D(t)=(\mathcal{S}f,f),\ \ \ \partial_{t}\mathcal{S}=\mathcal{S}_{t}, \ \ \ N(t)=\frac{D(t)}{H(t)}.
		\end{equation*}
		Then
		\begin{align}\label{convex1-1}
			\partial^{2}_{t}H=&2\partial_{t}{\rm Re}(\partial_{t}f-\mathcal{S}f-\mathcal{A}f,f)+2(\mathcal{S}_{t}f+[\mathcal{S},\mathcal{A}]f,f) \notag\\
			&+\|\partial_{t}f-\mathcal{A}f+\mathcal{S}f\|_{L^{2}}^{2}-\|\partial_{t}f-\mathcal{A}f-\mathcal{S}f\|_{L^{2}}^{2}.
		\end{align}
		%and
		%\begin{align*}
		%\dot{N}(t)\geqslant\frac{(\mathcal{S}_{t}f+%[\mathcal{S},\mathcal{A}]f,f)}{H}-\frac{\|\partial_{t}f-\mathcal{A}f-\mathcal{S}f\|_{L^{2}}^{2}}{2H}.
		%\end{align*}
		%Moreover, if
		%\begin{equation}\label{convex1-2}
		%|\partial_{t}f-\mathcal{A}f-\mathcal{S}f|\leqslant M_{1}|f|+G \ \ \ in \ \mathbb{R}^{n}\times [0,1],\ \ \  \mathcal{S}_{t}+[\mathcal{S},\mathcal{A}]\geqslant -M_{0},
		%\end{equation}
		%and
		%\begin{equation*}
		%  M_{2}=\sup_{[0,1]}\frac{\|G(t)\|_{L^{2}}}{\|f(t)\|_{L^{2}}}
		%\end{equation*}
		%is finite, then $log \ H(t)$ is `logarithmically convex' in $[0,1]$ and there is a universal constant $N$ such that
		%\begin{equation}\label{convex1-3}
		%H(t)\leqslant e^{N(M_0+M_1+M_2+M^{2}_{1}+M^{2}_{2})}H(0)^{1-t}H(1)^{t},\,\,\, when \,\, 0\leqslant t \leqslant 1.
		%\end{equation}
	\end{lemma}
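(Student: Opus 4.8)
\textbf{Proof plan for Lemma \ref{log convex}.} This is a purely algebraic identity about a vector-valued function $f(t)$ in Hilbert space, so I would not invoke anything about the magnetic operator; the operators $\mathcal{S}$ and $\mathcal{A}$ are abstract. The plan is to compute $\partial_t H$ and then $\partial_t^2 H$ directly, using only the (skew-)symmetry of $\mathcal{A}$ (respectively symmetry of $\mathcal{S}$) and the product rule, and then reorganize the resulting terms into the claimed combination of a total time derivative, a commutator/$\mathcal{S}_t$ term, and a difference of two squared norms.

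First I would write $\partial_t H = 2\,{\rm Re}(\partial_t f, f)$. The trick is to insert and subtract the ``drift'' $\mathcal{S}f+\mathcal{A}f$: write $\partial_t f = (\partial_t f - \mathcal{S}f - \mathcal{A}f) + \mathcal{S}f + \mathcal{A}f$. Since $\mathcal{A}$ is skew-symmetric, ${\rm Re}(\mathcal{A}f,f)=0$, and since $\mathcal{S}$ is symmetric, $(\mathcal{S}f,f)=D(t)$ is real, so $\partial_t H = 2\,{\rm Re}(\partial_t f - \mathcal{S}f-\mathcal{A}f,f) + 2D(t)$. Differentiating once more in $t$: the first piece contributes exactly the term $2\partial_t{\rm Re}(\partial_t f - \mathcal{S}f - \mathcal{A}f,f)$ that appears in \eqref{convex1-1}, and it remains to compute $2\,\partial_t D(t)$.

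For $\partial_t D(t) = \partial_t(\mathcal{S}f,f)$ I would use $(\mathcal{S}_t f, f) + (\mathcal{S}\partial_t f, f) + (\mathcal{S}f,\partial_t f)$, and again substitute $\partial_t f = (\partial_t f - \mathcal{S}f - \mathcal{A}f) + \mathcal{S}f + \mathcal{A}f$ in the last two inner products. Using symmetry of $\mathcal{S}$ to move it around, the cross terms involving $\mathcal{S}f$ and $\mathcal{A}f$ collapse: $(\mathcal{S}\mathcal{S}f,f)+(\mathcal{S}f,\mathcal{S}f)$ combine, and $(\mathcal{S}\mathcal{A}f,f)+(\mathcal{S}f,\mathcal{A}f) = (\mathcal{S}\mathcal{A}f,f)-(\mathcal{A}\mathcal{S}f,f) = ([\mathcal{S},\mathcal{A}]f,f)$ after using skew-symmetry of $\mathcal{A}$ to flip its sign. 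This produces the $2(\mathcal{S}_t f + [\mathcal{S},\mathcal{A}]f,f)$ term. The remaining terms — those linear in the ``residual'' $R:=\partial_t f - \mathcal{S}f-\mathcal{A}f$ paired against $\mathcal{S}f$, plus the leftover $\pm\|\mathcal{S}f\|^2$ — must be recombined with the already-isolated $2\partial_t{\rm Re}(R,f)$ contribution to form the two squared norms $\|\partial_t f - \mathcal{A}f + \mathcal{S}f\|^2 - \|\partial_t f - \mathcal{A}f - \mathcal{S}f\|^2$. I expect the bookkeeping here to be the only delicate point: one should check that $\|\partial_t f-\mathcal{A}f+\mathcal{S}f\|^2 - \|\partial_t f-\mathcal{A}f-\mathcal{S}f\|^2 = 4\,{\rm Re}(\partial_t f-\mathcal{A}f,\mathcal{S}f)$ and verify this equals precisely the sum of the not-yet-accounted-for real-part terms, so that everything matches \eqref{convex1-1}. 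Since $f\in C^\infty([0,1],\mathscr{S}(\R^n))$ and $G$ is positive (the positivity of $G$ and the definition of $N(t)$ are not actually needed for this identity itself — they are presumably used in the subsequent log-convexity application), all the manipulations are justified with no convergence issues, and the identity follows by collecting terms.
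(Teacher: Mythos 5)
Your plan is correct, and the bookkeeping you flag as the only delicate point does check out. Writing $R:=\partial_t f-\mathcal{S}f-\mathcal{A}f$ and $u:=\partial_t f-\mathcal{A}f=R+\mathcal{S}f$, one has $\partial_t D = (\mathcal{S}_t f,f) + 2\,\mathrm{Re}(\partial_t f,\mathcal{S}f)$; substituting $\partial_t f=R+\mathcal{S}f+\mathcal{A}f$ and using $2\,\mathrm{Re}(\mathcal{A}f,\mathcal{S}f)=([\mathcal{S},\mathcal{A}]f,f)$ gives $2\partial_t D = 2(\mathcal{S}_t f+[\mathcal{S},\mathcal{A}]f,f)+4\,\mathrm{Re}(R,\mathcal{S}f)+4\|\mathcal{S}f\|^2$, and the polarization identity $\|u+\mathcal{S}f\|^2-\|u-\mathcal{S}f\|^2=4\,\mathrm{Re}(u,\mathcal{S}f)=4\,\mathrm{Re}(R,\mathcal{S}f)+4\|\mathcal{S}f\|^2$ closes the gap exactly.

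For comparison: the paper does not prove this lemma at all — it simply cites \cite[Lemma 2]{EKPV08-1}. Your direct computation (split off the residual $R$, use symmetry/skew-symmetry to extract the commutator, polarize to produce the difference of squared norms) is the standard argument found in that reference, so you have reconstructed the intended proof rather than found a different one. Your side remark that $G$, $N(t)$ and positivity are not actually used in \eqref{convex1-1} is accurate; they are vestigial in this statement and only matter in the logarithmic-convexity corollaries that follow. One small precision worth adding if you write this out: all manipulations require that the pairings $(\mathcal{S}f,f)$, $(\mathcal{A}f,f)$ etc.\ make sense and that $\mathcal{S}$, $\mathcal{A}$ map $\mathscr{S}(\R^n)$ into itself (or at least into $L^2$) so the differentiations under the inner product are legitimate; the hypothesis $f\in C^\infty([0,1],\mathscr{S}(\R^n))$ is there precisely to make these formal steps rigorous.
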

	\begin{proof}
		See \cite[Lemma 2]{EKPV08-1}.
	\end{proof}
	%Next, we prove a upper bound estimate, which will be used in the next section to prove the main result.
	
	To state the result, we introduce several auxiliary functions. We define 
	\begin{equation}\label{bounded1-5}
		\phi(r):=r\cdot e^{-\int_{0}^{\frac{1}{r}}\frac{e^{-t}-1}{t}\ \d t},\,\,\, r\geqslant 1.
	\end{equation}
	Let $\sigma(x)=|x|$ and define 
	\begin{equation}\label{bounded1-6}
		w(x):=\phi(\sigma(x))=\phi(r),\,\,\, r=|x|\geqslant 1.
	\end{equation}
	For $1<p<2$, we  further  define
	%\begin{equation}\label{bounded1-7}
	%\varphi(x):=|x|^{p}+p(2-p)w(x),
	%\end{equation}
	\begin{equation}\label{def-varphi}
		\varphi(x)=\begin{cases}
			|x|^{p}+p(2-p)w(x), & |x|\geqslant 1,\\
			s_1|x|^{2}+s_2, &  |x|<1,
		\end{cases}    
	\end{equation}
	where 
	$$
	s_1=\frac{1}{2}\Big(p+p(2-p)e^{-1}\phi(1)\Big),
	$$
	and
	$$
	s_2=\frac{1}{2}\Big(2-p+p(2-p)(2-e^{-1})\phi(1)\Big).
	$$
	When $r=|x|\geqslant 1$, by the definitions of $\phi(r), w(x), \varphi(x)$, we obtain the following results:
	
	\noindent (i) $\phi(r)$ is an increasing function, and $\phi(r)=O(r)$.
	
	\noindent (ii) $w(x)=O(|x|)$,\,\,$\nabla w(x)=\phi'(r)\frac{x}{r}$ \ and \ $|\nabla w(x)|=O(1)$.
	
	\noindent (iii) $\varphi\geqslant 0$, and 
	\begin{equation}\label{nabla varphi}
		\nabla \varphi(x)=\Big(p|x|^{p-2}+p(2-p)\frac{\phi'(r)}{r}\Big)x, \,\, D^{2}\varphi \geqslant p(p-1)|x|^{p-2}I.    
	\end{equation}

	\noindent In addition, it is easy to verify that $\varphi(x)$ is a strictly convex radial function on $\R^n$, and we have
	\begin{align}\label{bounded1-11}
		\|\partial^{\alpha}\varphi\|_{L^{\infty}(\R^n)}\leqslant c,\ 2\leqslant |\alpha| \leqslant 4;\ \,\,\mbox{and}\,\,\,\|\partial^{\alpha}\varphi\|_{L^{\infty}(|x|\leqslant 2)}\leqslant c,\ |\alpha|\leqslant 4,
	\end{align}
	where $c>0$ is a constant, $\alpha$ denotes a multi-index.
	
	%From now on, let $c_n$ denote a constant depending only on the dimension $n$, $c_p$ denote a constant that depends only on $p,n$, $c_{p,m}$ denote a constant that depends only on $p,m,n$ and $c^{*}$ denote a constant depending on $p,m,n,a_0,a_1,M_B,M_V$ but not depending on $k$.
	\begin{proposition}\label{bound h}
		Let $v\in C([0,1];L^{2}(\mathbb{R}^{n}))$ be a solution to
		\begin{equation}\label{bounded1-1}
			\partial_{t}v=\i(\Delta_{A}v+Vv)\,\,\,\,\mbox{in}\,\,\,\mathbb{R}^{n}\times [0,1],
		\end{equation}
		$A=A_k=A_k(x,t):\mathbb{R}^{n+1}\rightarrow \mathbb{R}^{n}$, $V=V_k=V_k(x,t):\mathbb{R}^{n+1}\rightarrow \mathbb{C}$. Assume that
		\begin{equation}\label{bounded1-2}
			x\cdot A(x)=x\cdot \partial_{t}A(x)=0.
		\end{equation} 
		Denote $B=B_k=B_k(x,t)=D_{x}A_k-D_{x}A_k^{t}$, where $k$ is a parameter. Suppose that there exists some large and fixed constant $k_0$, such that
		\begin{equation}\label{MBMV}
			\sup_{t\in[0,1]}\|x^{t}B(\cdot,t)\|_{L^{\infty}}\leqslant \Big(\frac{k^m}{a_0}\Big)^{\frac{1}{2p}}M_B,\,\,
			\sup_{t\in[0,1]}\|V(\cdot,t)\|_{L^{\infty}}\leqslant \Big(\frac{k^m}{a_0}\Big)^{\frac{1}{p}}M_{V}, \,\,\, \mbox{if}\,\,\, k\geqslant k_0, 
		\end{equation}
		where $M_B, M_V$ are defined in \eqref{M_B}, \eqref{M_V} respectively, $a_0,m$ are the constants in \eqref{thm1.1-6}-\eqref{thm1.1-7}. 
		
		\noindent For $1<p<2$, assume that
		\begin{equation}\label{bounded1-4}
			\sup_{t\in[0,1]}\|e^{\theta |x|^{p}}v(t)\|_{L^{2}}\leqslant c^{*}k^{c_{p,m}}e^{a_{1}k^{\frac{m}{2-p}}},\,\,\, \theta=(k^m a_0)^{\frac{1}{2}}
		\end{equation}
		holds for $k\geqslant k_{0}$\footnote{Here and in what follows, $c_{p,m}$ denotes a constant that depends only on $p,m,n$; $c^{*}$ represents  a constant that is dependent on $p,m,n,a_0,a_1,a_2,M_B,M_V$, but is independent of $k$. Although the  specific values of these constants may change from line to line, their dependency relationships remain consistent in subsequent discussions.}, where $k_0,a_0,m$ are the constants mentioned above, $a_1$ is the constant in \eqref{thm1.1-7}. Let
		\begin{equation}\label{bounded1-8}
			h(x,t)=e^{\tilde{\theta}\varphi}v(x,t),\,\,\,\tilde{\theta}=\frac{\theta}{2},
		\end{equation}
		where $\varphi$ is defined as in \eqref{def-varphi}. Then we have
		\begin{align}\label{bounded1-9}
			8\tilde{\theta}&\int_{0}^{1}\int_{\mathbb{R}^{n}}t(1-t)\nabla_{A}h\cdot D^{2}\varphi \overline{\nabla_{A}h}\ \d x\ \d t
			+8{\tilde{\theta}}^{3}\int_{0}^{1}\int_{\mathbb{R}^{n}}t(1-t)D^{2}\varphi \nabla \varphi\cdot \nabla \varphi|h|^{2}\ \d x \ \d t\notag\\
			&\leqslant c^{*}k^{c_{p,m}}e^{2a_{1}k^{\frac{m}{2-p}}}.
		\end{align}
	\end{proposition}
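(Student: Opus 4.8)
The plan is to apply the abstract logarithmic-convexity identity of Lemma~\ref{log convex} to the conjugated function $h=e^{\tilde\theta\varphi}v$, with a judicious splitting of the operator $e^{\tilde\theta\varphi}(\i\partial_t+\Delta_A)e^{-\tilde\theta\varphi}$ into its symmetric and skew-symmetric parts, and then integrate the resulting differential inequality in time against the weight $t(1-t)$. First I would compute that, since $v$ solves \eqref{bounded1-1}, the function $h$ satisfies $\partial_t h = \mathcal{S}h+\mathcal{A}h+\mathcal{R}h$, where $\mathcal{S}$ is the symmetric operator coming from the ``second-order'' piece of the conjugation — concretely $\mathcal{S}=\i\big(\tilde\theta(\Delta\varphi)+2\tilde\theta\nabla\varphi\cdot\nabla_A\big)$ up to the usual symmetrization, contributing the Hessian term $\nabla_A h\cdot D^2\varphi\,\overline{\nabla_A h}$ and the term $D^2\varphi\,\nabla\varphi\cdot\nabla\varphi\,|h|^2$ in the commutator $[\mathcal{S},\mathcal{A}]$ — while $\mathcal{A}=\i\Delta_A - \i\tilde\theta^2|\nabla\varphi|^2\cdot(\text{skew part})$ carries the Schrödinger evolution, and $\mathcal{R}$ collects the potential term $\i V$ together with the lower-order magnetic contributions governed by $x^tB$ via the Cronström-gauge identities \eqref{gauge1-5}. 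The key algebraic point is that $[\mathcal{S},\mathcal{A}]$ produces exactly the two positive quantities on the left of \eqref{bounded1-9}: the first from commuting $\nabla\varphi\cdot\nabla_A$ against $\Delta_A$ (an integration by parts turning it into $\int \nabla_A h\cdot D^2\varphi\,\overline{\nabla_A h}$ plus curvature terms controlled by $M_B$), the second from commuting the first-order part of $\mathcal{S}$ against the multiplication part $\tilde\theta^2|\nabla\varphi|^2$.

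Next I would run the convexity machine: set $H(t)=(h,h)$, $D(t)=(\mathcal{S}h,h)$, and apply \eqref{convex1-1}. Multiplying by $t(1-t)$ and integrating over $[0,1]$, the ``total-derivative'' term $2\partial_t\mathrm{Re}(\partial_t h-\mathcal{S}h-\mathcal{A}h,h)=2\partial_t\mathrm{Re}(\mathcal{R}h,h)$ integrates against $t(1-t)$ to something bounded by $\int_0^1 (|1-2t|)\,|(\mathcal{R}h,h)|\,\d t$ after integrating by parts in $t$, hence is controlled by $\sup_t\|\mathcal{R}h(t)\|_{L^2}\|h(t)\|_{L^2}$. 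The sign-indefinite last line of \eqref{convex1-1}, $\|\partial_t h-\mathcal{A}h+\mathcal{S}h\|^2-\|\partial_t h-\mathcal{A}h-\mathcal{S}h\|^2$, equals $4\,\mathrm{Re}(\mathcal{S}h,\partial_t h-\mathcal{A}h)=4\,\mathrm{Re}(\mathcal{S}h,\mathcal{R}h)$, again an error term of the same type. Therefore, after throwing away $\int t(1-t)\partial_t^2 H$ (which integrates by parts to $-2\int_0^1 H\,\d t \le 0$, moved to the favourable side, or simply bounded), one is left with
\[
8\tilde\theta\!\int_0^1\!\!\int_{\R^n}\! t(1-t)\nabla_A h\cdot D^2\varphi\,\overline{\nabla_A h}
+8\tilde\theta^3\!\int_0^1\!\!\int_{\R^n}\! t(1-t)D^2\varphi\,\nabla\varphi\cdot\nabla\varphi\,|h|^2
\ \le\ C\sup_{t\in[0,1]}\big(\|h(t)\|_{L^2}+\|\mathcal{R}h(t)\|_{L^2}\big)^2,
\]
up to harmless curvature remainders from $M_B$ absorbed into the left side (using \eqref{thm1.1-5} and the smallness built into $\varepsilon_0',\varepsilon_0''$, or absorbed for $k$ large thanks to the powers of $k$ in \eqref{MBMV}).

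It then remains to bound the right-hand side. Here I would use the quantitative hypothesis \eqref{bounded1-4}: since $\varphi(x)\le |x|^p + Cp(2-p)|x|$ for $|x|\ge 1$ and $\varphi$ is bounded on $|x|\le 1$, and $\tilde\theta=\theta/2$, we get $\|h(t)\|_{L^2}=\|e^{\tilde\theta\varphi}v(t)\|_{L^2}\lesssim \|e^{\theta|x|^p}v(t)\|_{L^2}^{1/2}\cdot(\text{lower-order})\le c^*k^{c_{p,m}}e^{a_1 k^{m/(2-p)}}$ after a Cauchy–Schwarz/interpolation splitting of the weight $e^{\tilde\theta\varphi}$ between $e^{\theta|x|^p}$ and a mild $L^2$ bound on $v$ — the exponent $a_1 k^{m/(2-p)}$ surviving because $\tilde\theta\varphi \le \theta|x|^p$ on the bulk and the linear correction $p(2-p)\tilde\theta w\lesssim \theta|x|$ only costs a factor $e^{C\theta^{2-p'}\cdots}$ that is lower order relative to $e^{a_1 k^{m/(2-p)}}$ (this is where the precise scaling $\theta=(k^m a_0)^{1/2}$ and the exponent $m/(2-p)$ are matched). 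The term $\|\mathcal{R}h(t)\|_{L^2}$ is estimated the same way: $\mathcal{R}$ is multiplication by $\i V$ plus first-order magnetic terms with coefficients $\Theta(x),\partial_{x_j}\Theta_j$, all uniformly bounded by \eqref{M_V}, \eqref{MBMV}, \eqref{thm1.1-5}, so $\|\mathcal{R}h\|_{L^2}\lesssim (k^m/a_0)^{1/p}M_V\|h\|_{L^2}+\|\nabla_A h\|_{L^2}\cdot(\text{bounded})$; the gradient piece is then reabsorbed into the first (positive) term on the left using that $D^2\varphi\gtrsim p(p-1)|x|^{p-2}I$ degenerates only at infinity where $e^{\tilde\theta\varphi}$ decay makes it negligible, or more simply absorbed by choosing $k_0$ large. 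Collecting everything yields \eqref{bounded1-9} with a constant $c^*k^{c_{p,m}}e^{2a_1 k^{m/(2-p)}}$ of the claimed form.

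The main obstacle I anticipate is the bookkeeping of the commutator $[\mathcal{S},\mathcal{A}]$ in the \emph{magnetic} setting: unlike the scalar case, commuting $\nabla\varphi\cdot\nabla_A$ with $\Delta_A$ generates, besides the desired $\int\nabla_A h\cdot D^2\varphi\,\overline{\nabla_A h}$, extra terms of the form $\int \varphi'(r)\,\overline{h}\,(x^tB)\cdot\nabla_A h$ and $\int |\nabla\varphi|^2 (x^tB)\cdot(\text{stuff})$ coming from the curvature of the connection; one must check, using $x\cdot A=0$ and \eqref{gauge1-5} together with the radiality and convexity of $\varphi$, that these are either sign-definite with the right sign or small enough (after the rescaling in \eqref{MBMV}, for $k\ge k_0$) to be absorbed. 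Keeping track of which $k$-powers appear where — so that the final constant is genuinely of the form $c^*k^{c_{p,m}}e^{2a_1k^{m/(2-p)}}$ and no worse — is the delicate part; everything else is a routine, if lengthy, integration-by-parts computation modelled on Lemma~\ref{log convex} and the scalar argument of \cite{EKPV11}.
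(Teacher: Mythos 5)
Your high-level scheme matches the paper's: conjugate by $e^{\tilde\theta\varphi}$, split the conjugated operator into symmetric $\mathcal{S}=-\i\tilde\theta(2\nabla\varphi\cdot\nabla_A+\Delta\varphi)$ and skew $\mathcal{A}=\i(\Delta_A+\tilde\theta^2|\nabla\varphi|^2)$, invoke the convexity identity of Lemma~\ref{log convex}, multiply by $t(1-t)$ and integrate, and recognize that the magnetic commutator contribution
$8\tilde\theta\,\mathrm{Im}\int\!\!\int t(1-t)\,h\,(\nabla\varphi)^tB\cdot\overline{\nabla_A h}$ is the term that must be tamed. This much is correct and is exactly how \eqref{bounded1-13}--\eqref{bounded1-18} proceed. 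One small correction on the set-up: there is no remainder operator $\mathcal{R}$ containing the $x^tB$ terms; the conjugated equation is simply $\partial_th=\mathcal{S}h+\mathcal{A}h+\i e^{\tilde\theta\varphi}Vv$, and the $x^tB$ enters only through the commutator $[\mathcal{S},\mathcal{A}]$ (formula \eqref{bounded1-15}). The potential $V$ lives entirely in the forcing and is controlled by \eqref{MBMV}--\eqref{bounded1-4}; your claim that $\|h\|_{L^2}\lesssim\|e^{\theta|x|^p}v\|^{1/2}_{L^2}\cdot(\cdots)$ should simply be $\|h\|_{L^2}\lesssim\|e^{\theta|x|^p}v\|_{L^2}$, since $\tilde\theta\varphi\leq\theta|x|^p$ outside a ball (because $\tilde\theta=\theta/2$ and $\varphi\leq|x|^p+O(|x|)$) and $\varphi$ is bounded inside it.

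Where the proposal goes astray is in the treatment of the gradient piece of the magnetic error, which is the crux of the proof. You write that the degeneracy of $D^2\varphi\gtrsim p(p-1)|x|^{p-2}I$ at infinity is harmless because ``$e^{\tilde\theta\varphi}$ decay makes it negligible'' — but $e^{\tilde\theta\varphi}$ \emph{grows} at infinity; $h=e^{\tilde\theta\varphi}v$ does not gain decay at large $|x|$, and this is precisely why the degeneracy is dangerous. The paper's actual mechanism is different and sharper: after Young's inequality with exponents $(1/\varepsilon,\varepsilon)$, the gradient piece of the magnetic error on $B_1^c$ carries a coefficient $\sim\varepsilon\tilde\theta\|x^tB\|^{4/3}\big(p|x|^{p-2}+p(2-p)\phi'(r)/r\big)^{4/3}$, and absorbing this into $8\tilde\theta p(p-1)|x|^{p-2}|\nabla_Ah|^2$ works because the \emph{same} power $|x|^{p-2}$ appears on both sides, once one notes $\big(|x|^{p-2}\big)^{1/3}\leq1$ for $|x|\geq1$ and chooses $\varepsilon$ so that $\varepsilon\|x^tB\|^{4/3}$ is small (this forces $\varepsilon\sim k^{-4/(3p)}$ in view of \eqref{MBMV}; see \eqref{varepsilon}). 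Moreover — and this is the point your proposal misses — the absorption cannot be performed post-facto on the already-derived inequality \eqref{bounded1-18}, because the quantity to be absorbed into appears on both sides and its finiteness is not known a priori. The paper instead re-runs the $t(1-t)$-weighted convexity argument \emph{restricted to $B_1^c$} to derive the independent bound \eqref{bounded1-24}--\eqref{bounded1-28}, in which the absorption is already built into the bracket $\big[8\tilde\theta p(p-1)|x|^{p-2}-4\varepsilon\tilde\theta\|x^tB\|^{4/3}(\cdots)^{4/3}\big]$; with $\varepsilon$ as in \eqref{varepsilon} this bracket is nonnegative and \eqref{bounded1-28} gives a genuine a priori estimate on $\int\!\!\int t(1-t)|x|^{p-2}|\nabla_Ah|^2$, which then closes the bound on $IV$ in \eqref{bounded1-29}. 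The phrase ``or more simply absorbed by choosing $k_0$ large'' in your proposal does not capture this two-pass structure and, as stated, would leave a logical gap in the proof.
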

	
	\begin{proof}
		
		We first write \eqref{bounded1-1} as the form
		\begin{equation}\label{bounded1-12}
			\partial_{t}v=\i\Delta_{A}v+\i F, \,\,\, F=Vv.
		\end{equation}
		Since $h(x,t)=e^{\tilde{\theta}\varphi}v(x,t)$, it follows that $h$ verifies
		\begin{equation}\label{bounded1-13}
			\partial_{t}h=\mathcal{S}h+\mathcal{A}h+\i e^{\tilde{\theta}\varphi}F \,\,\,\, \mbox{in} \,\,\, \mathbb{R}^{n}\times [0,1],
		\end{equation}
		where symmetric and skew-symmetric operators $\mathcal{S}$ and $\mathcal{A}$ are given as follows
		\begin{equation}\label{bounded1-14}
			\mathcal{S}=-\i\tilde{\theta}(2\nabla \varphi\cdot \nabla_{A}+\Delta \varphi), \,\,\, \mathcal{A}=\i(\Delta_{A}+\tilde{\theta}^{2}|\nabla \varphi|^{2}).
		\end{equation}
		A calculation shows that (see\cite[Lemma 2.9]{BFGRV13})
		\begin{align}\label{bounded1-15}
			(\mathcal{S}_{t}h+[\mathcal{S},\mathcal{A}]h,h)=&4\tilde{\theta}\int_{\mathbb{R}^{n}}\nabla_{A}h\cdot D^{2}\varphi \overline{\nabla_{A}h}\ \d x
			+4\tilde{\theta}^{3}\int_{\mathbb{R}^{n}}D^{2}\varphi \nabla \varphi\cdot \nabla \varphi|h|^{2}\ \d x\notag\\
			&-\tilde{\theta}\int_{\mathbb{R}^n}|h|^{2}\Delta^{2}\varphi\ \d x-2\tilde{\theta}\int_{\mathbb{R}^{n}}|h|^{2}\nabla \varphi\cdot \partial_{t}A\ \d x \notag\\ &-4\tilde{\theta}\ {\rm Im}\int_{\mathbb{R}^{n}}h(\nabla \varphi)^{t}B\cdot \overline{\nabla_{A}h}\ \d x.
		\end{align}
		Notice that from \eqref{convex1-1} it follows
		\begin{align}\label{bounded1-16}
			\partial^{2}_{t}H\geqslant&2\partial_{t}{\rm Re}(\partial_{t}h-\mathcal{S}h-\mathcal{A}h,h)+2(\mathcal{S}_{t}h+[\mathcal{S},\mathcal{A}]h,h)
			-\|\partial_{t}h-\mathcal{A}h-\mathcal{S}h\|_{L^{2}}^{2}.
		\end{align}
		Multiplying \eqref{bounded1-16} by $t(1-t)$ and integrating in $t\in [0,1]$, we obtain
		\begin{equation}\label{bounded1-17}
			2\int_{0}^{1}t(1-t)(\mathcal{S}_{t}h+[\mathcal{S},\mathcal{A}]h,h)\ \d t\leqslant c_{n}\sup_{[0,1]}\|e^{\tilde{\theta}\varphi}v(t)\|_{L^{2}}^{2}
			+c_{n}\sup_{[0,1]}\|e^{\tilde{\theta}\varphi}F(t)\|_{L^{2}}^{2},
		\end{equation}
		where we used $H=(h,h)$, \eqref{bounded1-13}, as well as the H\"{o}lder inequality in above estimate. We remark that the proof of the above inequality can be made rigorous by parabolic regularization (see \cite[proof of Theorem 5]{EKPV08-1}).
		Hence, it follows that
		\begin{align}\label{bounded1-18}
			8\tilde{\theta}&\int_{0}^{1}\int_{\mathbb{R}^{n}}t(1-t)\nabla_{A}h\cdot D^{2}\varphi \overline{\nabla_{A}h}\ \d x\ \d t+8{\tilde{\theta}}^{3}\int_{0}^{1}\int_{\mathbb{R}^{n}}t(1-t)D^{2}\varphi \nabla \varphi\cdot \nabla \varphi|h|^{2}\ \d x\ \d t\notag\\
			&\leqslant c_{n}\sup_{[0,1]}\|e^{\tilde{\theta}\varphi}v(t)\|_{L^{2}}^{2}+c_{n}\|V\|_{L^{\infty}}^{2}\sup_{[0,1]}\|e^{\tilde{\theta}\varphi}v(t)\|_{L^{2}}^{2}
			+c_{n}\tilde{\theta}\sup_{[0,1]}\|e^{\tilde{\theta}\varphi}v(t)\|_{L^{2}}^{2}\notag\\
			&\ \ \ +4\tilde{\theta}\int_{0}^{1}\int_{\mathbb{R}^{n}}t(1-t)|h|^{2}\nabla \varphi\cdot \partial_{t}A\ \d x\ \d t
			+8\tilde{\theta}\ {\rm Im}\int_{0}^{1}\int_{\mathbb{R}^{n}}t(1-t)h(\nabla \varphi)^{t}B\cdot \overline{\nabla_{A}h}\ \d x\ \d t\notag\\
			&\leqslant c^{*}k^{c_{p,m}}e^{2a_{1}k^{\frac{m}{2-p}}}+8\tilde{\theta}\ {\rm Im}\int_{0}^{1}\int_{\mathbb{R}^{n}}t(1-t)h(\nabla \varphi)^{t}B\cdot \overline{\nabla_{A}h}\ \d x\ \d t, \,\,\,\,\,\mbox{if}\,\,\,\, k\geqslant k_0,
		\end{align}
		where in the first inequality, we used \eqref{bounded1-11},\eqref{bounded1-12}, \eqref{bounded1-15} and \eqref{bounded1-17}; while in the second inequality, we used \eqref{nabla varphi},\eqref{bounded1-2}-\eqref{bounded1-8}. The constants $k,k_0,m,c^*,c_{p,m}$ as in \eqref{MBMV}-\eqref{bounded1-4}.
		
		Next, it remains to bound the last term on the right-hand side of \eqref{bounded1-18}. By Young’s inequality with exponents $(\frac{1}{\varepsilon},\varepsilon)$ where $\varepsilon$ to be determined later (see \eqref{varepsilon}), we obtain
		\begin{align}\label{bounded1-19}
			8&\tilde{\theta}\ {\rm Im}\int_{0}^{1}\int_{\mathbb{R}^{n}}t(1-t)h(\nabla \varphi)^{t}B\cdot \overline{\nabla_{A}h}\ \d x\ \d t\notag\\
			&=8\tilde{\theta}\ {\rm Im} \int_{0}^{1}\int_{B_1} t(1-t)h(\nabla \varphi)^{t}B\cdot \overline{\nabla_{A}h}\ \d x\ \d t 
			+8\tilde{\theta}\ {\rm Im} \int_{0}^{1}\int_{B_{1}^{c}}t(1-t)h(\nabla \varphi)^{t}B\cdot \overline{\nabla_{A}h}\ \d x\ \d t \notag\\
			&\leqslant \frac{4}{\varepsilon}\tilde{\theta} \int_{0}^{1}\int_{B_{1}}t(1-t)|h|^{2}\|(\nabla \varphi)^{t}B\|_{L^{\infty}}^{\frac{2}{3}}\ \d x\ \d t
			+4\varepsilon\tilde{\theta} \int_{0}^{1}\int_{B_{1}}t(1-t)\|(\nabla \varphi)^{t}B\|_{L^{\infty}}^{\frac{4}{3}}|\nabla_{A}h|^{2}\ \d x\ \d t  \notag\\
			&\ \ \ +\frac{4}{\varepsilon}\tilde{\theta} \int_{0}^{1}\int_{B_{1}^{c}}t(1-t)|h|^{2}\|(\nabla \varphi)^{t}B\|_{L^{\infty}}^{\frac{2}{3}}\ \d x\ \d t
			+4\varepsilon\tilde{\theta}\int_{0}^{1}\int_{B_{1}^{c}}t(1-t)\|(\nabla \varphi)^{t}B\|_{L^{\infty}}^{\frac{4}{3}}|\nabla_{A}h|^{2}\ \d x\ \d t\notag\\
			&=:I+II+III+IV.
		\end{align}
		Here and in what follows, $B_1$ denotes the open unit ball in $\R^n$, while $B_1^c=\R^n \backslash B_1$.
		
		Now we estimate each term of the above inequality.
		
		For the term $I, II, III$, we obtain that for $k\geqslant k_0$,
		\begin{align}
			\label{bounded1-20} I&\leqslant \frac{c_{p,n}}{\varepsilon}\|x^{t}B\|_{L^{\infty}}^{\frac{2}{3}}\tilde{\theta}\int_{0}^{1}\int_{B_{1}}t(1-t)|h|^{2}\ \d x\ \d t
			\leqslant  \frac{1}{\varepsilon}c^{*}k^{c_{p,m}}e^{2a_{1}k^{\frac{m}{2-p}}},\\
			\label{bounded1-21} II&\leqslant c_{p,n}\varepsilon\|x^{t}B\|_{L^{\infty}}^{\frac{4}{3}}\tilde{\theta}\int_{0}^{1}\int_{B_{1}}t(1-t)|\nabla_{A}h|^{2}\ \d x\ \d t
			\leqslant \varepsilon c^{*}k^{c_{p,m}}e^{2a_{1}k^{\frac{m}{2-p}}},\\
			\label{bounded1-22} III&\leqslant \frac{c_{p,n}}{\varepsilon}\|x^{t}B\|_{L^{\infty}}^{\frac{2}{3}}
			\sup_{|x|>1}\Big(p|x|^{p-2}+p(2-p) \frac{\phi'(|x|)}{|x|}\Big)^{\frac{2}{3}}\tilde{\theta}\int_{0}^{1}\int_{B_{1}^{c}}t(1-t)|h|^{2}\ \d x\ \d t \notag\\
			&\leqslant \frac{c_{p,n}}{\varepsilon}\tilde{\theta}\int_{0}^{1}\int_{B_{1}^{c}}t(1-t)|h|^{2}\ \d x\ \d t
			\leqslant  \frac{1}{\varepsilon}c^{*}k^{c_{p,m}}e^{2a_{1}k^{\frac{m}{2-p}}},
		\end{align}  
		where $k, k_0,m, c^*,c_{p,m}$ as in \eqref{MBMV}-\eqref{bounded1-4}. In the first two estimates, we used \eqref{def-varphi}, \eqref{MBMV}-\eqref{bounded1-8}; in the third estimate, we used \eqref{nabla varphi}, \eqref{MBMV}-\eqref{bounded1-8}.
		
		For the term $IV$, using \eqref{nabla varphi}, we have
		\begin{equation}\label{bounded1-23}
			IV\leqslant 4\varepsilon \tilde{\theta}\|x^{t}B\|_{L^{\infty}}^{\frac{4}{3}}
			\int_{0}^{1}\int_{B_{1}^{c}}t(1-t)\Big(p|x|^{p-2}+p(2-p) \frac{\phi'(|x|)}{|x|}\Big)^{\frac{4}{3}}|\nabla_{A}h|^{2}\ \d x\ \d t.
		\end{equation}
		To further estimate the right hand side of $\eqref{bounded1-23}$, we multiply \eqref{bounded1-16} by $t(1-t)$ and integrate in $t\in [0,1]$, this yields
		\begin{align*}
			\int_{0}^{1}t(1-t)\partial^{2}_{t}H\ \d t&\geqslant2\int_{0}^{1}t(1-t)\partial_{t}{\rm Re}(\partial_{t}h-\mathcal{S}h-\mathcal{A}h,h)\ \d t+2\int_{0}^{1}t(1-t)(\mathcal{S}_{t}h+[\mathcal{S},\mathcal{A}]h,h)\ \d t\\
			&\ \ \ -\int_{0}^{1}t(1-t)\|\partial_{t}h-\mathcal{A}h-\mathcal{S}h\|_{L^{2}}^{2}\ \d t.
		\end{align*}
		%then we deal with each term to get the estimate for $IV$.  
		We examine each term in the aforementioned inequality separately.  For the first term $\partial_{t}^{2}H$, integrating twice by parts we get
		\begin{align}\label{bounded1-24}
			\int_{0}^{1}t(1-t)\partial_{t}^{2}H(t)\ \d t=H(1)+H(0)-2\int_{0}^{1}H(t)\ \d t\leqslant 2\sup_{[0,1]}\|h(\cdot,t)\|_{L^{2}}^{2}.
		\end{align}
		For the second term, integrating by parts and applying Schwartz inequality, we obtain
		\begin{align}\label{bounded1-25}
			2\int_{0}^{1}\int_{B_{1}^{c}}t(1-t)\partial_{t}{\rm Re}\ \overline{h}(\partial_{t}-\mathcal{S}-\mathcal{A})h\ \d x\ \d t 
			&=-2\int_{0}^{1}\int_{B_{1}^{c}}(1-2t){\rm Re}\ \overline{h}(\partial_{t}-\mathcal{S}-\mathcal{A})h\ \d x\ \d t \notag\\
			&\geqslant -\Big(\sup_{[0,1]}\|\partial_{t}h-\mathcal{S}h-\mathcal{A}h\|_{L^{2}}^{2}+\sup_{[0,1]}\|h(\cdot,t)\|_{L^{2}}^{2}\Big)\notag\\
			&\geqslant -(\|V\|_{L^{\infty}}^{2}+1)\sup_{[0,1]}\|h(\cdot,t)\|_{L^{2}}^{2}.
		\end{align}
		For the third term, we have
		\begin{align}\label{bounded1-26}
			2&\int_{0}^{1}\int_{B_{1}^{c}}t(1-t)\overline{h}(\mathcal{S}_{t}+[\mathcal{S},\mathcal{A}])h \ \d x\ \d t\notag\\
			&\geqslant 8\tilde{\theta}\int_{0}^{1}\int_{B_{1}^{c}}t(1-t)\nabla_{A}h\cdot D^{2}\varphi \overline{\nabla_{A}h}\ \d x\ \d t-2\tilde{\theta}\int_{0}^{1}\int_{B_{1}^{c}}t(1-t)|h|^{2}\Delta^{2}\varphi\ \d x\ \d t\notag\\ 
			&\ \ \ -8\tilde{\theta}\ {\rm Im}\int_{0}^{1}\int_{B_{1}^{c}}t(1-t)h(\nabla \varphi)^{t}B\cdot \overline{\nabla_{A}h}\ \d x\ \d t\notag\\
			&\geqslant -c_{p,n}(\frac{1}{\varepsilon}+1)\tilde{\theta}\sup_{[0,1]}\|h(\cdot,t)\|_{L^{2}}^{2} +\int_{0}^{1}\int_{B_{1}^{c}}t(1-t)\bigg[8\tilde{\theta}p(p-1)|x|^{p-2}-4\varepsilon\tilde{\theta}\|x^{t}B\|_{L^{\infty}}^{\frac{4}{3}}\notag\\
			&\hspace*{5 cm}\cdot\Big(p|x|^{p-2}+p(2-p)\frac{\phi'(|x|)}{|x|}\Big)^{\frac{4}{3}}\bigg]|\nabla_{A}h|^{2}\ \d x\ \d t,
		\end{align}
		where in the first inequality, we used \eqref{nabla varphi},\eqref{bounded1-2}, \eqref{bounded1-15}; while in the second inequality, we used \eqref{nabla varphi}, \eqref{bounded1-11}, and Schwartz inequality. 
		
		\noindent While for the last term $\|\partial_{t}h-\mathcal{A}h-\mathcal{S}h\|_{L^{2}}^{2}$,  we derive from \eqref{bounded1-13} that
		\begin{align}\label{bounded1-27}
			-\int_{0}^{1}t(1-t)\|\partial_{t}h-\mathcal{S}h-\mathcal{A}h\|_{L^{2}(B_{1}^{c})}^{2}\ \d t
			&\geqslant -\sup_{[0,1]}\|\partial_{t}h-\mathcal{S}h-\mathcal{A}h\|_{L^{2}}^{2}\int_{0}^{1}t(1-t)\ \d t\notag\\
			&\geqslant -\frac{1}{6}\|V\|_{L^{\infty}}^{2}\sup_{[0,1]}\|h(\cdot,t)\|_{L^{2}}^{2} .
		\end{align}
		Combining \eqref{bounded1-24}-\eqref{bounded1-27}, we have
		\begin{align}\label{bounded1-28}
			\int_{0}^{1}&\int_{B_{1}^{c}}t(1-t)\bigg[8\tilde{\theta}p(p-1)|x|^{p-2}-4\varepsilon\tilde{\theta}\|x^{t}B\|_{L^{\infty}}^{\frac{4}{3}}
			\Big(p|x|^{p-2}+p(2-p)\frac{\phi'(|x|)}{|x|}\Big)^{\frac{4}{3}}\bigg]|\nabla_{A}h|^{2}\ \d x\ \d t\notag\\
			&\leqslant c_{p,n}(\|V\|_{L^{\infty}}^{2}+\frac{1}{\varepsilon}+1)\tilde{\theta}\sup_{[0,1]}\|h(\cdot,t)\|_{L^{2}}^{2}.
		\end{align}
		We choose
		\begin{equation}\label{varepsilon}
			\varepsilon=2^{-\frac{4}{3}}p^{-\frac{1}{3}}(p-1)a_{0}^{\frac{4}{3p}}k^{-\frac{4}{3p}}M_{B}^{-\frac{4}{3}}\Big(1+(2-p)^{\frac{4}{3}}\phi(1)^{\frac{4}{3}}\Big)^{-1},    
		\end{equation}
		and from \eqref{bounded1-23} and \eqref{bounded1-28}, we obtain
		\begin{align}\label{bounded1-29}
			IV \leqslant c^{*}k^{c_{p,m}}e^{2a_{1}k^{\frac{m}{2-p}}}, \,\,\,\, \mbox{if} \,\,\, k\geqslant k_0,
			%4\varepsilon \tilde{\theta}\|x^{t}B\|_{L^{\infty}}^{\frac{4}{3}}
			%\int_{0}^{1}\int_{B_{1}^{c}}t(1-t)\Big(p|x|^{p-2}+p(2-p) \frac{\phi'(|x|)}{|x|}\Big)^{\frac{4}{3}}|\nabla_{A}h|^{2}\ \d x\ \d t\notag\\
			%&\leqslant \int_{0}^{1}\int_{B_{1}^{c}}t(1-t)\bigg[8\tilde{\theta}p(p-1)|x|^{p-2}-4\varepsilon\tilde{\theta}\|x^{t}B\|_{L^{\infty}}^{\frac{4}{3}}\notag\\
			%&\hspace*{2 cm}\cdot\Big(p|x|^{p-2}+p(2-p)\frac{\phi'(|x|)}{|x|}\Big)^{\frac{4}{3}}\!\bigg]|\nabla_{A}h|^{2}\ \d x\ \d t\notag\\
		\end{align}
		where $k,k_0,m,c^*,c_{p,m}$ as in \eqref{MBMV}-\eqref{bounded1-4}.
		By combining \eqref{bounded1-18}  through \eqref{bounded1-22} with \eqref{bounded1-29} and the  chosen value of $\varepsilon$, we achieve  the desired inequality \eqref{bounded1-9}. The proof of Proposition \ref{bound h} is complete.
	\end{proof}
	
	\section{Linear exponential decay and Carleman inequality}\label{proof of theorem}
	
	We begin by establishing the a linear exponential decay estimate for solutions of the electromagnetic Schr\"{o}dinger equation \eqref{eq-25-2-3}, a result that may be of independent interest.  We  adapt ideas from \cite{KPV03}, with the key step involving energy estimates on the projection of the solution onto both the positive and negative
	frequencies. Additionally, we utilize Calder\'{o}n's first commutator estimate to facilitate the analysis.
	\begin{lemma}\label{Interpolation}
		Suppose that there exists $\varepsilon_{0}>0$ such that $\mathbb{V}: \mathbb{R}^{n}\times[0,1]\rightarrow \mathbb{C}$ satisfies
		\begin{equation*}
			\|\mathbb{V}\|_{L^{1}_{t}L^{\infty}_{x}} \leqslant \varepsilon_{0},
		\end{equation*}
		and $\mathbb{A}=\mathbb{A}(x,t): \mathbb{R}^{n+1}\rightarrow \mathbb{R}^{n}$ with $e_{1}\cdot \mathbb{A}(x,t)=0$ satisfies
		\begin{align*}
			\| \ |\mathbb{A}|^2\|_{L^{1}_{t}L^{\infty}_{x}} \leqslant \varepsilon_{0}', \,\, \,\,\,\mbox{and}\,\,\,\,\,\|\partial_{x_{j}}\mathbb{A}_j\|_{L^{1}_{t}L^{\infty}_{x}}\leqslant \varepsilon_{0}'', \,\, j=2,\ldots,n,
		\end{align*}
		where $\varepsilon_{0}', \varepsilon_{0}''$ are given by \eqref{thm1.1-5}. Let  $u\in C([0,1]; L_{x}^{2}(\mathbb{R}^n))$ be a strong solution of the  perturbed equation
		\begin{align}\label{eq-25-2-3}
			\begin{cases}
				\i\partial_{t}u+\Delta_{\mathbb{A}}u=\mathbb{V}(x,t)u+\mathbb{F}(x,t),& (x,t)\in \mathbb{R}^{n}\times [0,1],\\
				u(x,0)=u_{0}(x),
			\end{cases}
		\end{align}
		with $\mathbb{F}\in L^{1}_{t}([0,1]; L^{2}_{x}(\mathbb{R}^{n}))$. If for some $\lambda\in \mathbb{R}^{n}$, 
		\begin{equation*}
			u_0,\, u_1=u(\cdot,1)\in L^2(e^{2\lambda\cdot x}\ \d x),\,\, \,\,\,\mbox{and}\,\,\,\,\, \mathbb{F}\in L^{1}_{t}([0,1]; L^2(e^{2\lambda\cdot x}\ \d x)),
		\end{equation*}
		then there exists a constant $C>0$ independent of $\lambda$ such that
		\begin{equation}\label{equ lemma1-0}
			\sup_{0\leqslant t \leqslant 1}\|e^{\lambda\cdot x}u(\cdot,t)\|_{L^{2}} \leqslant C\Big(\|e^{\lambda\cdot x}u_0\|_{L^{2}}+\|e^{\lambda\cdot x}u_1\|_{L^{2}}
			+\int_{0}^{1}\|e^{\lambda\cdot x}\mathbb{F}(\cdot,t)\|_{L^{2}}\ \d t\Big).
		\end{equation}
	\end{lemma}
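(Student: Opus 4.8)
The plan is to mimic the frequency-splitting argument of Kenig--Ponce--Vega \cite{KPV03} for $\i\partial_t+\Delta$, tracking the magnetic perturbation and the first-order term it produces. First I would conjugate: set $v=e^{\lambda\cdot x}u$, so that $v$ solves $\i\partial_t v+\Delta_{\mathbb{A}}v+2\lambda\cdot(\nabla-\i\mathbb{A})v+|\lambda|^2 v=\mathbb{V}v+e^{\lambda\cdot x}\mathbb{F}$, where the zeroth-order term $|\lambda|^2 v$ is harmless because it is purely imaginary after multiplying by $\i$ (it does not affect the $L^2$ norm), and the dangerous term is the drift $2\lambda\cdot\nabla v$. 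Expanding $\Delta_{\mathbb{A}}=\Delta-2\i\mathbb{A}\cdot\nabla-\i(\div\mathbb{A})-|\mathbb{A}|^2$, the equation for $v$ becomes $\i\partial_t v+\Delta v = G$, where $G$ collects $-2\lambda\cdot\nabla v$ together with all the $\mathbb{A}$-terms, the potential $\mathbb{V}v$, and the source; note $\div\mathbb{A}=\sum_{j\geqslant 2}\partial_{x_j}\mathbb{A}_j$ since $e_1\cdot\mathbb{A}=0$, so by hypothesis $\|\div\mathbb{A}\|_{L^1_tL^\infty_x}\leqslant (n-1)\varepsilon_0''$, and $\||\mathbb{A}|^2\|_{L^1_tL^\infty_x}\leqslant\varepsilon_0'$, while $\|\mathbb{V}\|_{L^1_tL^\infty_x}\leqslant\varepsilon_0$.

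The core is to control the drift $\lambda\cdot\nabla v$. Following \cite{KPV03}, I would split $v=v_++v_-$ into positive and negative frequency parts with respect to the direction $\lambda/|\lambda|$ — concretely, if $\lambda=|\lambda|e_1$ after a rotation (allowed since $e_1\cdot\mathbb{A}=0$ is not rotation-invariant, so one must instead rotate coordinates so $\lambda$ points along $e_1$ and accept that the condition $e_1\cdot\mathbb{A}=0$ is exactly what guarantees $\div\mathbb{A}$ has no $\partial_{x_1}$ component; this is the role of \eqref{thm1.1-4}) — write $P_\pm$ for the projections onto $\pm\xi_1>0$. The point is that $e^{\i t\Delta}$ commutes with $P_\pm$ and the operator $e^{2\i t|\lambda|\xi_1}$ (the symbol generated by the drift) is bounded on the range of $P_+$ forward in time and on $P_-$ backward in time. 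So I would run the energy method on $w_\pm=e^{\mp 2|\lambda|t\,\partial_{x_1}^{\text{(something)}}}$... more precisely: differentiate $\|v_+(t)\|_{L^2}^2$, use the equation, and observe that the drift term $\mathrm{Re}\langle 2\lambda\cdot\nabla v_+, v_+\rangle = 0$ by antisymmetry, so the genuine obstruction is the \emph{cross terms} $\mathrm{Re}\langle 2\lambda\cdot\nabla v_-, v_+\rangle$ and the commutators $[P_+,\mathbb{A}\cdot\nabla]$, $[P_+,\lambda\cdot\mathbb{A}]$. These last commutators are where Calderón's first commutator estimate enters: $\|[P_+, a]\,\partial_j f\|_{L^2}\lesssim \|\nabla a\|_{L^\infty}\|f\|_{L^2}$-type bounds let me trade the derivative falling on $\mathbb{A}$ for the $L^\infty$ norms of $\mathbb{A}$ and $\partial\mathbb{A}$ that the hypotheses provide, producing an integrable-in-$t$ coefficient.

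After these reductions, the scheme is: derive a differential inequality of the form $\frac{d}{dt}\big(\|v_+(t)\|_{L^2}^2-\|v_-(t)\|_{L^2}^2\big)\geqslant -\eta(t)\big(\|v_+\|^2+\|v_-\|^2\big) - (\text{source terms})$ with $\eta\in L^1(0,1)$ and $\|\eta\|_{L^1}$ controlled by $\varepsilon_0,\varepsilon_0',\varepsilon_0''$, plus the reverse inequality, then combine them with Gronwall run forward from $t=0$ for $v_+$ and backward from $t=1$ for $v_-$; summing, $\sup_t\|v(t)\|_{L^2}\lesssim \|v(0)\|_{L^2}+\|v(1)\|_{L^2}+\|e^{\lambda\cdot x}\mathbb{F}\|_{L^1_tL^2_x}$ with constant independent of $\lambda$, which is \eqref{equ lemma1-0}. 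The hard part will be making the commutator bookkeeping uniform in $\lambda$: the drift coefficient is $O(|\lambda|)$, so one must see that after the frequency splitting the surviving $|\lambda|$-dependent terms either vanish by antisymmetry or get absorbed into the left side (this is precisely the mechanism of \cite{KPV03}), and that the $|\mathbb{A}|^2$ term — which is \emph{not} small pointwise, only small in $L^1_tL^\infty_x$ — can still be swallowed by a Gronwall argument rather than needing to be a perturbation of the flow. Rigor requires the usual approximation (truncate $\lambda$, mollify, take $e^{\lambda\cdot x}\to e^{\lambda\cdot x}(1+\varepsilon|x|^2)^{-1}$ or similar) to justify the formal integrations by parts, exactly as in \cite{EKPV08-1,KPV03}.
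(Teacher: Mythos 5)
Your proposal follows essentially the same route as the paper, which in turn adapts Kenig--Ponce--Vega \cite{KPV03}: conjugate by the exponential weight, split into positive and negative frequencies $P_\pm$ in the $\lambda$-direction, exploit the condition $e_1\cdot\mathbb{A}=0$, control the $\mathbb{A}$-commutators by Calder\'on's first commutator estimate, and close with a forward/backward energy argument plus absorption of the small $L^1_t L^\infty_x$ coefficients. The regularization you mention at the end (truncating the weight) is also the mechanism the paper uses.

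There is one genuine gap in your sketch, precisely in the step you dismiss as ``harmless.'' You claim the zeroth-order term $|\lambda|^2 v$ drops out because it is purely imaginary after multiplying by $\i$. This is true \emph{before} truncation and \emph{before} applying $P_\pm$. But once you replace $e^{\lambda\cdot x}$ by the truncated weight $\phi_n(x_1)=e^{\theta_n(x_1)}$ --- which you need for the integrations by parts to be legitimate --- the constant $|\lambda|^2$ becomes a spatially-varying coefficient of size $|\lambda|^2$, of the form $\beta^2\varphi_n^4(x_1)$. This coefficient no longer commutes with $P_\pm$, so the term is no longer purely imaginary in the $P_\pm$-projected energy identity, and the resulting commutator is $O(|\lambda|^2)\|v_n\|^2$, which cannot be absorbed uniformly in $\lambda$. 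The paper's fix is a second, explicitly time-dependent conjugation $w_n=e^{-\i\beta^2\varphi_n^4(x_1)t}v_n$ that eliminates $\beta^2\varphi_n^4$ entirely, at the cost of producing new first-order and zeroth-order terms whose coefficients ($a^2, b, h$ in the paper's notation) have the crucial $O(1/n^k)$ decay in their derivatives needed for the Calder\'on estimates. Your sketch would run into an unabsorbable $|\lambda|^2$-weighted term without this extra transformation.

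A smaller imprecision: after conjugation and frequency splitting, there is no cross term $\Re\langle 2\lambda\cdot\nabla v_-, v_+\rangle$ in the untruncated picture, because $\lambda\cdot\nabla$ is a Fourier multiplier commuting with $P_\pm$ and $P_+P_-=0$; the cross terms appear only after truncation, when the drift coefficient is $2\beta\varphi_n^2(x_1)$ rather than constant, and that is exactly where Calder\'on's commutator identities (I) and (II) of the paper are used to show the symmetrized drift integrates to zero up to an $O(\|w_n\|^2/n)$ error. Likewise, the final step in the paper is not a Gronwall chain but an absorption argument: pick $t_n$ where $\|w_n(\cdot,t_n)\|$ nearly attains its supremum, integrate the $P_+$ inequality forward from $0$ to $t_n$ and the $P_-$ inequality backward from $1$ to $t_n$, and absorb the small $L^1_t$ coefficients into the supremum. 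Your ``forward/backward Gronwall'' phrasing captures the spirit but not quite the mechanism.
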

	
	\begin{proof}
		By a standard coordinate rotation, it suffices to show that for some $\beta \in \mathbb{R}$,
		\begin{equation*}
			u_0,\, u_1=u(\cdot,1)\in L^2(e^{2\beta x_1}\ \d x),\,\, \,\,\,\mbox{and}\,\,\,\,\, \mathbb{F}\in L^{1}_{t}([0,1]; L^2(e^{2\beta x_1}\ \d x)),
		\end{equation*}
		there exists a constant $C>0$ independent of $\beta$ such that
		\begin{equation}\label{equ lemma1-1}
			\sup_{0\leqslant t \leqslant 1}\|e^{\beta x_1}u(\cdot,t)\|_{L^{2}} \leqslant C\Big(\|e^{\beta x_1}u_0\|_{L^{2}}+\|e^{\beta x_1}u_1\|_{L^{2}}
			+\int_{0}^{1}\|e^{\beta x_1}\mathbb{F}(\cdot,t)\|_{L^{2}}\ \d t\Big).
		\end{equation}
		
		Without loss of generality, we assume that $\beta>0$.
		
		\textbf{Step 1: Regularization and change of variables.}
		
		We define $\varphi_{n}\in C^{\infty}(\mathbb{R})$, such that $0\leqslant \varphi_{n}\leqslant 1$, and
		\begin{align*}
			\varphi_{n}(s)=
			\begin{cases}
				1,&  s\leqslant n,\\
				0,&  s>10n,
			\end{cases}
		\end{align*}
		with
		$$
		|\varphi_{n}^{(k)}(s)|\leqslant \frac{c_k}{n^k},\,\, k=0,1,\ldots.
		$$
		Using $\varphi_{n}$, we define $\theta_{n}\in C^{\infty}(\mathbb{R})$ by
		\begin{align*}
			\theta_{n}(s):=\beta \int_{0}^{s}\varphi_{n}^{2}(\ell)\ d\ell,
		\end{align*}
		which satisfies
		\begin{align*}
			\theta_{n}(s)=
			\begin{cases}
				\beta s,& s\leqslant n,\\
				c_n\beta,& s>10n,
			\end{cases}
		\end{align*}
		and $\theta_{n}'(s)=\beta \varphi_{n}^{2}(s)\leqslant \beta$, with $|\theta_{n}^{(k)}(s)|\leqslant \frac{\beta c_k}{n^{k-1}}$, $k=1, 2,\ldots.$
		Finally, we define the function $\phi_{n}(s)=e^{\theta_{n}(s)}$, which satisfies $\phi_{n}(s)\leqslant e^{\beta s}$ and $\phi_{n}(s)\rightarrow e^{\beta s}$ as $n\rightarrow \infty$.
		
		We shall write the equation for $v_{n}(x,t)=\phi_{n}(x_1)u(x,t)$. 
		%Now we want to find a differential equation that $v_{n}(x,t)$ satisfies. First, we compute
		Observe the following identities:
		\begin{align*}
			\phi_{n}\partial_{t}u&=\partial_{t}v_{n},\\
			\phi_{n}\partial_{x_1}u&=\partial_{x_1}v_{n}-\beta \varphi_{n}^{2}(x_1)v_{n},\\
			\phi_{n}\Delta_{\mathbb{A}}u&=\Delta_{\mathbb{A}}v_{n}-2\beta \varphi_{n}^{2}\partial_{x_1}v_{n}-(2\beta \varphi_{n}\varphi'_{n}-\beta^{2}\varphi_{n}^{4}-2\beta \i \varphi_{n}^{2}\mathbb{A}_{1})v_{n}.
		\end{align*}
		From these, we derive
		\begin{equation*}
			\i\partial_{t}v_{n}+\Delta_{\mathbb{A}}v_{n}=\mathbb{V}v_{n}+2\beta \varphi_{n}^{2}\partial_{x_1}v_{n}+(2\beta \varphi_{n}\varphi'_{n}-\beta^{2}\varphi_{n}^{4}-2\beta \i \varphi_{n}^{2}\mathbb{A}_{1})v_{n}+\phi_{n}(x_1)\mathbb{F}.
		\end{equation*}
		Notice that $e_{1}\cdot \mathbb{A}(x,t)=0$, it follows that  $v_{n}(x,t)$ satisfies the following equation
		\begin{equation}\label{equ lemma1-2}
			\i\partial_{t}v_{n}+\Delta_{\mathbb{A}}v_{n}=\mathbb{V}v_{n}+2\beta \varphi_{n}^{2}\partial_{x_1}v_{n}+(2\beta \varphi_{n}\varphi'_{n}-\beta^{2}\varphi_{n}^{4})v_{n}+\phi_{n}(x_1)\mathbb{F}.
		\end{equation}
		
		To eliminate the term $\beta^{2}\varphi_{n}^{4}$,  %We wish to compute $\partial_{t}\|v_{n}\|_{L^{2}}^{2}$, however, there is a constant multiply of $\|v_{n}\|_{L^{2}}^{2}$ on the right-hand side of \eqref{equ lemma1-2}(to be more precise, it is the term $ )$, which will not be made small when doing estimates at the very end. Hence  we remove this term by another change of variables.
		we introduce a new function
		\begin{equation}\label{equ lemma1-3}
			w_{n}(x,t)=e^{-\i\beta^{2}\varphi_{n}^{4}(x_1)t}v_{n}(x,t)=:e^{\mu}v_{n}(x,t).
		\end{equation}
		We now seek a differential equation  satisfied by  $w_{n}$.
		Denote
		\begin{align*}
			&\tilde{\mathbb{F}}_{n}(x,t)=e^{\mu}\phi_{n}(x_1)\mathbb{F}(x,t), \quad\,  \tilde{\mathbb{F}}(x,t)=e^{\mu}e^{\beta x_1}\mathbb{F}(x,t),\\
			&a^{2}(x_1)=2\beta \varphi_{n}^{2}(x_1), \,\qquad\qquad\,   b(x_1)=-8\beta^{2}\varphi_{n}^{3}(x_1)\varphi'_{n}(x_1),\\
			&h(x_1,t)=-(4\i\beta^{2}\varphi_{n}^{3}\varphi'_{n}t)^{2}-12\i\beta^{2}\varphi_{n}^{2}(\varphi'_{n})^{2}t-4\i\beta^{2}\varphi_{n}^{3}\varphi''_{n}t+2\beta \varphi_{n}\varphi'_{n}+8\i\beta^{3}\varphi_{n}^{5}\varphi'_{n}t.
		\end{align*}
		Following a calculation analogous to that for $v_{n}(x,t)$, we transform equation \eqref{equ lemma1-2}  into the following differential equation for  $w_{n}$:
		\begin{equation}\label{equ lemma1-4}
			\i\partial_{t}w_{n}+\Delta_{\mathbb{A}}w_{n}=\mathbb{V}w_{n}+\tilde{\mathbb{F}}_{n}+hw_{n}+ a^{2}(x_1)\partial_{x_1}w_n+\i tb(x_1)\partial_{x_1}w_n.
		\end{equation}
		We observe the following decay properties of the coefficients in \eqref{equ lemma1-4}, that is,
		\begin{align}
			\label{equ lemma1-5} \|\partial_{x_1}^{k}h(x_1,t)\|_{L^{\infty}(\mathbb{R}\times [0,1])}\leqslant& \frac{c_k}{n^{k+1}}, \ k=0,1,\ldots, \\
			\label{equ lemma1-6} \|\partial_{x_1}^{k}a^{2}(x_1)\|_{L^{\infty}(\mathbb{R})}\leqslant& \frac{c_k}{n^{k}},  \ a^{2}(x_1)\geqslant 0, \ k=0,1,\ldots, \\
			\label{equ lemma1-7} \|\partial_{x_1}^{k}b(x_1)\|_{L^{\infty}(\mathbb{R})}\leqslant& \frac{c_k}{n^{k}}, \ b(x_1) \ \mbox{real},\ k=0,1,\ldots.
		\end{align}
		
		\textbf{Step 2: Projection estimates.}
		
		%We start by introducing a couple of Fourier multipliers.
		
		First, we define $\eta \in C^{\infty}_{0}(\mathbb{R}^{n})$ such that $0\leqslant \eta(x) \leqslant 1$, and
		\begin{align*}
			\eta(x)=
			\begin{cases}
				1,& |x|\leqslant \frac{1}{2},\\
				0,& |x|\geqslant 1,
			\end{cases}
		\end{align*}
		Additionally, we define $\chi_{\pm}(\xi)$ as
		\begin{align*}
			\chi_{\pm}(\xi)=
			\begin{cases}
				1,& \xi_1>0 \ (\xi_1<0),\\
				0,& \xi_1<0 \ (\xi_1>0).
			\end{cases}
		\end{align*}
		For $\varepsilon \in (0,1]$, we introduce two projections:
		\begin{align}\label{projection}
			\widehat{P_{\varepsilon}f}(\xi):=\eta(\varepsilon \xi)\hat{f}(\xi),\,\,\, 
			\widehat{P_{\pm}f}(\xi):=\chi_{\pm}(\xi)\hat{f}(\xi).
		\end{align}
		In this step, we prove the following
		
		\textbf{Claim}. We have  the following inequality:
		\begin{align}\label{equ lemma1-27}
			\partial_{t}\int_{\mathbb{R}^{n}}|P_{\varepsilon}P_{+}w_{n}|^2\ \d x
			\leqslant& 6c\sum_{j=2}^{n}\|\partial_{x_{j}}\mathbb{A}_{j}\|_{L^{\infty}}\|w_n\|_{L^{2}}^{2}
			+2c\|\ |\mathbb{A}|^{2}\|_{L^{\infty}}\left\|w_{n}\right\|_{L^{2}}^{2} \notag\\
			\ \ &+2c\left\|\mathbb{V}\right\|_{L^{\infty}}\left\|w_{n}\right\|_{L^{2}}^{2}+2c\|\tilde{\mathbb{F}}\|_{L^{2}}\left\|w_{n}\right\|_{L^{2}}
			+\frac{c}{n}\left\|w_{n}\right\|_{L^{2}}^{2},
		\end{align}
		where $c>0$ is a constant independent of $\varepsilon\in (0,1]$ and $n\in \mathbb{Z}^{+}$.
		
		To establish this inequality, we apply the projections to each term in \eqref{equ lemma1-4}, yielding
		\begin{align}\label{equ lemma1-8}
			\i&\partial_{t}P_{\varepsilon}P_{+}w_{n}+\Delta P_{\varepsilon}P_{+}w_{n}-P_{\varepsilon}P_{+}(2\i\mathbb{A}\cdot \nabla w_{n})
			-P_{\varepsilon}P_{+}(\i(\operatorname{div} \mathbb{A})w_{n})-P_{\varepsilon}P_{+}(|\mathbb{A}|^{2}w_{n})\notag\\
			&=P_{\varepsilon}P_{+}(\mathbb{V}w_{n})+P_{\varepsilon}P_{+}(\tilde{\mathbb{F}}_{n})+P_{\varepsilon}P_{+}(hw_{n})+P_{\varepsilon}P_{+}(a^{2}\partial_{x_1}w_{n})
			+P_{\varepsilon}P_{+}(\i tb\partial_{x_1}w_{n}).
		\end{align}
		Taking the complex conjugate of \eqref{equ lemma1-8}, we obtain
		\begin{align}\label{equ lemma1-9}
			-&\i\partial_{t}\overline{P_{\varepsilon}P_{+}w_{n}}+\Delta \overline{P_{\varepsilon}P_{+}w_{n}}-\overline{P_{\varepsilon}P_{+}(2\i\mathbb{A}\cdot \nabla w_{n})}
			-\overline{P_{\varepsilon}P_{+}(\i(\operatorname{div} \mathbb{A})w_{n})}-\overline{P_{\varepsilon}P_{+}(|\mathbb{A}|^{2}w_{n})}\notag\\
			&=\overline{P_{\varepsilon}P_{+}(\mathbb{V}w_{n})}+\overline{P_{\varepsilon}P_{+}(\tilde{\mathbb{F}}_{n})}+\overline{P_{\varepsilon}P_{+}(hw_{n})}
			+\overline{P_{\varepsilon}P_{+}(a^{2}\partial_{x_1}w_{n})}+\overline{P_{\varepsilon}P_{+}(\i tb\partial_{x_1}w_{n})}.
		\end{align}
		Multiplying $\eqref{equ lemma1-8}$ by $\overline{P_{\varepsilon}P_{+}w_{n}}$ and $\eqref{equ lemma1-9}$ by $P_{\varepsilon}P_{+}w_{n}$, and then subtracting the latter from the former, we derive
		\begin{align*}%\label{equ lemma1-10}
			\i\partial_{t}|P_{\varepsilon}&P_{+}w_{n}|^2+\Delta P_{\varepsilon}P_{+}w_{n}\cdot \overline{P_{\varepsilon}P_{+}w_{n}}
			-\overline{\Delta P_{\varepsilon}P_{+}w_{n}}\cdot P_{\varepsilon}P_{+}w_{n}\notag\\
			&-P_{\varepsilon}P_{+}(2\i\mathbb{A}\cdot \nabla w_{n})\cdot \overline{P_{\varepsilon}P_{+}w_{n}}+\overline{P_{\varepsilon}P_{+}(2\i\mathbb{A}\cdot \nabla w_{n})}\cdot P_{\varepsilon}P_{+}w_{n}\notag\\
			&-P_{\varepsilon}P_{+}(\i(\operatorname{div} \mathbb{A})w_{n})\cdot \overline{P_{\varepsilon}P_{+}w_{n}}+\overline{P_{\varepsilon}P_{+}(\i(\operatorname{div} \mathbb{A})w_{n})}\cdot P_{\varepsilon}P_{+}w_{n}\notag\\
			&-P_{\varepsilon}P_{+}(|\mathbb{A}|^{2}w_{n})\cdot \overline{P_{\varepsilon}P_{+}w_{n}}+\overline{P_{\varepsilon}P_{+}(|\mathbb{A}|^{2}w_{n})}\cdot P_{\varepsilon}P_{+}w_{n}\notag\\
			=&P_{\varepsilon}P_{+}(\mathbb{V}w_{n})\cdot \overline{P_{\varepsilon}P_{+}w_{n}}-\overline{P_{\varepsilon}P_{+}(\mathbb{V}w_{n})}\cdot P_{\varepsilon}P_{+}w_{n}\notag\\
			&+P_{\varepsilon}P_{+}(\tilde{\mathbb{F}}_{n})\cdot \overline{P_{\varepsilon}P_{+}w_{n}}-\overline{P_{\varepsilon}P_{+}(\tilde{\mathbb{F}}_{n})}\cdot P_{\varepsilon}P_{+}w_{n}\notag\\
			&+P_{\varepsilon}P_{+}(hw_{n})\cdot \overline{P_{\varepsilon}P_{+}w_{n}}-\overline{P_{\varepsilon}P_{+}(hw_{n})}\cdot P_{\varepsilon}P_{+}w_{n}\notag\\
			&+P_{\varepsilon}P_{+}(a^{2}\partial_{x_1}w_{n})\cdot \overline{P_{\varepsilon}P_{+}w_{n}}-\overline{P_{\varepsilon}P_{+}(a^{2}\partial_{x_1}w_{n})}\cdot P_{\varepsilon}P_{+}w_{n}\notag\\
			&+P_{\varepsilon}P_{+}(\i tb\partial_{x_1}w_{n})\cdot \overline{P_{\varepsilon}P_{+}w_{n}}-\overline{P_{\varepsilon}P_{+}(\i tb\partial_{x_1}w_{n})}\cdot P_{\varepsilon}P_{+}w_{n}.
		\end{align*}
		Taking the imaginary part of the above equation, we have
		\begin{align}\label{equ lemma1-11}
			\partial_{t}|P_{\varepsilon}&P_{+}w_{n}|^2+2 {\rm Im}(\Delta P_{\varepsilon}P_{+}w_{n}\cdot \overline{P_{\varepsilon}P_{+}w_{n}})\notag\\
			=&4{\rm Re}(P_{\varepsilon}P_{+}(\mathbb{A}\cdot \nabla w_{n})\cdot \overline{P_{\varepsilon}P_{+}w_{n}})
			+2{\rm Re}(P_{\varepsilon}P_{+}((\operatorname{div} \mathbb{A})w_{n})\cdot \overline{P_{\varepsilon}P_{+}w_{n}})\notag\\
			&+2{\rm Im}(P_{\varepsilon}P_{+}(|\mathbb{A}|^{2}w_{n})\cdot \overline{P_{\varepsilon}P_{+}w_{n}})
			+2{\rm Im}(P_{\varepsilon}P_{+}(\mathbb{V}w_{n})\cdot \overline{P_{\varepsilon}P_{+}w_{n}})\notag\\
			&+2{\rm Im}(P_{\varepsilon}P_{+}(\tilde{\mathbb{F}}_{n})\cdot \overline{P_{\varepsilon}P_{+}w_{n}})
			+2{\rm Im}(P_{\varepsilon}P_{+}(hw_{n})\cdot \overline{P_{\varepsilon}P_{+}w_{n}})\notag\\
			&+2{\rm Im}(P_{\varepsilon}P_{+}(a^{2}\partial_{x_1}w_{n})\cdot \overline{P_{\varepsilon}P_{+}w_{n}})
			+2{\rm Re}(P_{\varepsilon}P_{+}(tb\partial_{x_1}w_{n})\cdot \overline{P_{\varepsilon}P_{+}w_{n}}).
		\end{align}
		
		Now we integrate on both sides of \eqref{equ lemma1-11} and estimate each term in the integration. Let
		\begin{align*}
			W_1:=&2{\rm Im}\int_{\mathbb{R}^{n}}\Delta P_{\varepsilon}P_{+}w_{n}\cdot \overline{P_{\varepsilon}P_{+}w_{n}}\ \d x,\\
			W_2:=&2{\rm Re}\int_{\mathbb{R}^{n}}P_{\varepsilon}P_{+}((\operatorname{div} \mathbb{A})w_{n})\cdot \overline{P_{\varepsilon}P_{+}w_{n}}\ \d x
			+2{\rm Im}\int_{\mathbb{R}^{n}}P_{\varepsilon}P_{+}(|\mathbb{A}|^{2}w_{n})\cdot \overline{P_{\varepsilon}P_{+}w_{n}}\ \d x\notag\\
			&+2{\rm Im}\int_{\mathbb{R}^{n}}P_{\varepsilon}P_{+}(\mathbb{V}w_{n})\cdot \overline{P_{\varepsilon}P_{+}w_{n}}\ \d x
			+2{\rm Im}\int_{\mathbb{R}^{n}}P_{\varepsilon}P_{+}(\tilde{\mathbb{F}}_{n})\cdot \overline{P_{\varepsilon}P_{+}w_{n}}\ \d x\notag\\
			&+2{\rm Im}\int_{\mathbb{R}^{n}}P_{\varepsilon}P_{+}(hw_{n})\cdot \overline{P_{\varepsilon}P_{+}w_{n}}\ \d x, \\ 
			W_3:=&2{\rm Im}\int_{\mathbb{R}^{n}}P_{\varepsilon}P_{+}(a^{2}\partial_{x_1}w_{n})\cdot \overline{P_{\varepsilon}P_{+}w_{n}}\ \d x
			+2{\rm Re}\int_{\mathbb{R}^{n}}P_{\varepsilon}P_{+}(tb\partial_{x_1}w_{n})\cdot \overline{P_{\varepsilon}P_{+}w_{n}}\ \d x,  \\  
			W_4:=&4{\rm Re}\int_{\mathbb{R}^{n}}P_{\varepsilon}P_{+}(\mathbb{A}\cdot \nabla w_{n})\cdot \overline{P_{\varepsilon}P_{+}w_{n}}\ \d x.   
		\end{align*}
		
		%Since for all $n\in \mathbb{Z}^{+}$, $w_{n}(\cdot)\in L^{2}(\mathbb{R}^{n})$, $\tilde{\mathbb{F}}_{n}(t)\in L^{2}(\mathbb{R}^{n})$ for a.e. $t$, we have 
		For the term $W_1$, integration by parts yields
		\begin{equation}\label{equ lemma1-12}
			{\rm Im}\int_{\mathbb{R}^{n}}\Delta P_{\varepsilon}P_{+}w_{n}\cdot \overline{P_{\varepsilon}P_{+}w_{n}}\ \d x=0.
		\end{equation}
		
		For the term $W_2$, since $w_{n}(\cdot)\in L^{2}(\mathbb{R}^{n})$ for almost every $t$, we use the H\"{o}lder inequality and the $L^{2}$ boundedness of $P_{\varepsilon}P_{+}$ to obtain the following estimates:
		\begin{align}
			\label{equ lemma1-13}\left| {\rm Re}\int_{\mathbb{R}^{n}}P_{\varepsilon}P_{+}((\operatorname{div} \mathbb{A})w_{n})\cdot \overline{P_{\varepsilon}P_{+}w_{n}}\ \d x\right|
			&\leqslant c\sum_{j=2}^{d}\|\partial_{x_{j}}\mathbb{A}_{j}\|_{L^{\infty}}\|w_n\|_{L^{2}}^{2}, \\
			\label{equ lemma1-14}\left|{\rm Im}\int_{\mathbb{R}^{n}}P_{\varepsilon}P_{+}(|\mathbb{A}|^{2}w_{n})\cdot \overline{P_{\varepsilon}P_{+}w_{n}}\ \d x\right|
			&\leqslant c\|\ |\mathbb{A}|^{2}\|_{L^{\infty}}\left\|w_{n}\right\|_{L^{2}}^{2}, \\
			\label{equ lemma1-15}\left|{\rm Im}\int_{\mathbb{R}^{n}}P_{\varepsilon}P_{+}(\mathbb{V}w_{n})\cdot \overline{P_{\varepsilon}P_{+}w_{n}}\ \d x\right|
			&\leqslant c\|\mathbb{V}\|_{L^{\infty}}\left\|w_{n}\right\|_{L^{2}}^{2}, \\
			\label{equ lemma1-16}\left|{\rm Im}\int_{\mathbb{R}^{n}}P_{\varepsilon}P_{+}(\tilde{\mathbb{F}}_{n})\cdot \overline{P_{\varepsilon}P_{+}w_{n}}\ \d x\right|
			&\leqslant c\|\tilde{\mathbb{F}}_{n}\|_{L^{2}}\left\|w_{n}\right\|_{L^{2}}, \\
			\label{equ lemma1-17}\left|{\rm Im}\int_{\mathbb{R}^{n}}P_{\varepsilon}P_{+}(hw_{n})\cdot \overline{P_{\varepsilon}P_{+}w_{n}}\ \d x\right|
			&\leqslant c\left\|h\right\|_{L^{\infty}}\left\|w_{n}\right\|_{L^{2}}^{2}\leqslant \frac{c}{n}\left\|w_{n}\right\|_{L^{2}}^{2},
		\end{align}
		%for all $n\in \mathbb{Z}^{+}$, $w_{n}(\cdot)\in L^{2}(\mathbb{R}^{n})$, $\tilde{\mathbb{F}}_{n}(t)\in L^{2}(\mathbb{R}^{n})$ for a.e. $t$,  
		where in \eqref{equ lemma1-16}, we used the fact that $\tilde{\mathbb{F}}_{n}(\cdot)\in L^{2}(\mathbb{R}^{n})$ for almost every $t$; and in \eqref{equ lemma1-17}, we used $\eqref{equ lemma1-5}$. The constant $c$ in $\eqref{equ lemma1-13}$-$\eqref{equ lemma1-17}$ is independent of $\varepsilon \in (0,1]$ and $n\in \mathbb{Z}^{+}$.
		
		Before estimating $W_3$ and $W_4$, we recall Calder${\rm \acute{o}}$n's first commutator estimates (see \cite{APC65,EMS93} or  \cite[Lemma 2.1]{KPV03}): Let $f\in L^2{}(\R^n), \partial_{x_{1}}a \in L^{\infty}(\R^n)$ and let  $P_{\pm}, P_{\varepsilon}$ be the operators defined in \eqref{projection}, then
		\begin{align}
			\label{equ lemma1-18}\left\|\left[P_{\pm};a\right]\partial_{x_{1}}f\right\|_{L^{2}},\,\,\left\|\partial_{x_{1}}\left[P_{\pm};a\right]f\right\|_{L^{2}}\leqslant& c\left\|\partial_{x_{1}}a\right\|_{L^{\infty}}\|f\|_{L^{2}}, \\
			%\label{equ lemma1-19}\left\|\partial_{x_{1}}\left[P_{\pm};a\right]f\right\|_{L^{2}}\leqslant& c\left\|\partial_{x_{1}}a\right\|_{L^{\infty}}\|f\|_{L^{2}}, \\
			\label{equ lemma1-20}\left\|\left[P_{\varepsilon};a\right]\partial_{x_{1}}f\right\|_{L^{2}},\,\,\left\|\partial_{x_{1}}\left[P_{\varepsilon};a\right]f\right\|_{L^{2}}\leqslant& c\left\|\partial_{x_{1}}a\right\|_{L^{\infty}}\|f\|_{L^{2}},
			%\label{equ lemma1-21}\left\|\partial_{x_{1}}\left[P_{\varepsilon};a\right]f\right\|_{L^{2}}\leqslant& c\left\|\partial_{x_{1}}a\right\|_{L^{\infty}}\|f\|_{L^{2}} .
		\end{align}
		where the constant $c$ is independent of $\varepsilon \in (0,1]$ and $n\in \mathbb{Z}^{+}$.
		
		For the term $W_3$, we utilize the results established in \cite[Lemma 2.1]{KPV03}. We revisit these estimates and present them as the following statements:
		
		\noindent (\uppercase\expandafter{\romannumeral1}) \textit{For $a^{2}(x_1)$  satisfying \eqref{equ lemma1-6}}, we have
		\begin{align}\label{equ lemma1-22}
			{\rm Im}&\int_{\mathbb{R}^{n}}P_{\varepsilon}P_{+}(a^{2}\partial_{x_1}w_{n})\cdot \overline{P_{\varepsilon}P_{+}w_{n}}\ \d x \notag\\
			&={\rm Im}\int_{\mathbb{R}^{n}}\partial_{x_1}P_{\varepsilon}P_{+}(aw_{n})\cdot \overline{P_{\varepsilon}P_{+}(aw_{n})}\ \d x
			+O\bigg(\frac{\left\|w_{n}\right\|_{L^{2}}^{2}}{n}\bigg) \notag\\
			&= O\bigg(\frac{\left\|w_{n}\right\|_{L^{2}}^{2}}{n}\bigg)
		\end{align}
		\textit{holds uniformly in $\varepsilon \in (0,1]$ and $n\in \mathbb{Z}^{+}$}.
		
		\noindent  (\uppercase\expandafter{\romannumeral2}) \textit{For $b(x_1)$  satisfying \eqref{equ lemma1-7}}, we have
		\begin{align*}%\label{equ lemma1-23}
			\int_{\mathbb{R}^{n}}&P_{\varepsilon}P_{+}(b\partial_{x_1}w_{n})\cdot \overline{P_{\varepsilon}P_{+}w_{n}}\ \d x \notag\\
			&=-\overline{\int_{\mathbb{R}^{n}}P_{\varepsilon}P_{+}(b\partial_{x_1}w_{n}\cdot \overline{P_{\varepsilon}P_{+}w_{n}})\ \d x}+O\bigg(\frac{\left\|w_{n}\right\|_{L^{2}}^{2}}{n}\bigg)
		\end{align*}
		\textit{\textit{holds uniformly in $\varepsilon \in (0,1]$ and $n\in \mathbb{Z}^{+}$}. Thus}
		\begin{equation}\label{equ lemma1-24}
			{\rm Re}\int_{\mathbb{R}^{n}}P_{\varepsilon}P_{+}(tb\partial_{x_1}w_{n})\cdot \overline{P_{\varepsilon}P_{+}w_{n}}\ \d x
			=O\bigg(\frac{\left\|w_{n}\right\|_{L^{2}}^{2}}{n}\bigg)
		\end{equation}
		\textit{holds uniformly in $\varepsilon \in (0,1]$ and $n\in \mathbb{Z}^{+}$}.
		
		\noindent From \eqref {equ lemma1-22} and \eqref{equ lemma1-24}, we derive the estimate for $W_3$. 
		
		For the term $W_4$, observe that
		\begin{equation}\label{W_4-sum}
			{\rm Re}(P_{\varepsilon}P_{+}(\mathbb{A}\cdot \nabla w_{n})\cdot \overline{P_{\varepsilon}P_{+}w_{n}})
			={\rm Re}\sum_{j=2}^{n}(P_{\varepsilon}P_{+}(\mathbb{A}_{j}\partial_{x_{j}}w_{n})\cdot \overline{P_{\varepsilon}P_{+}w_{n}}).
		\end{equation} 
		We address each term in the summation. First, from \eqref{equ lemma1-18}, we have
		\begin{equation*}
			\|P_{+}(\mathbb{A}_{j}\partial_{x_{j}}w_{n})-\mathbb{A}_{j} P_{+}(\partial_{x_{j}}w_{n})\|_{L^2}\leqslant c\|\partial_{x_{j}}\mathbb{A}_{j}\|_{L^{\infty}}\|w_n\|_{L^{2}}, \,\,\, j=2,\ldots,n.
		\end{equation*}
		This shows that the operator $P_{+}\mathbb{A}_{j}\partial_{x_{j}}-\mathbb{A}_{j} P_{+}\partial_{x_{j}}$,  acting on $w_n$, is bounded in $L^{2}$. Then by \eqref{equ lemma1-20}, we derive  
		\begin{equation*}
			\|P_{\varepsilon}P_{+}(\mathbb{A}_{j}\partial_{x_{j}}w_{n})-\mathbb{A}_{j} P_{\varepsilon}P_{+}(\partial_{x_{j}}w_{n})\|_{L^2}\leqslant c\|\partial_{x_{j}}\mathbb{A}_{j}\|_{L^{\infty}}\|w_n\|_{L^{2}},\,\,\, j=2,\ldots,n,
		\end{equation*}
		where the constant $c$ is independent of $\varepsilon\in (0,1]$ and $n\in \mathbb{Z}^{+}$.
		
		\noindent Hence, integrating by parts, and from \eqref{equ lemma1-18} and \eqref{equ lemma1-20}, we obtain
		\begin{align*}\label{equ lemma1-25}
			\int_{\mathbb{R}^{n}}P_{\varepsilon}P_{+}(\mathbb{A}_{j}\partial_{x_{j}}w_{n}) \overline{P_{\varepsilon}P_{+}w_{n}}\ \d x
			&=\int_{\mathbb{R}^{n}}\mathbb{A}_{j}P_{\varepsilon}P_{+}(\partial_{x_{j}}w_{n})\cdot \overline{P_{\varepsilon}P_{+}w_{n}}\ \d x
			+O\left(\|\partial_{x_{j}}\mathbb{A}_{j}\|_{L^{\infty}}\|w_n\|_{L^{2}}^{2}\right) \notag\\
			&=-\int_{\mathbb{R}^{n}}P_{\varepsilon}P_{+}w_{n}\overline{\partial_{x_{j}}(\mathbb{A}_{j}P_{\varepsilon}P_{+}w_{n})}\ \d x
			+O\left(\|\partial_{x_{j}}\mathbb{A}_{j}\|_{L^{\infty}}\|w_n\|_{L^{2}}^{2}\right) \notag\\
			&=-\int_{\mathbb{R}^{n}}P_{\varepsilon}P_{+}w_{n}\overline{P_{\varepsilon}P_{+}(\mathbb{A}_{j}\partial_{x_{j}}w_{n})}\ \d x
			+O\left(\|\partial_{x_{j}}\mathbb{A}_{j}\|_{L^{\infty}}\|w_n\|_{L^{2}}^{2}\right) \notag\\
			&=-\overline{\int_{\mathbb{R}^{n}}P_{\varepsilon}P_{+}(\mathbb{A}_{j}\partial_{x_{j}}w_{n}) \overline{P_{\varepsilon}P_{+}w_{n}}\ \d x}
			+O\left(\|\partial_{x_{j}}\mathbb{A}_{j}\|_{L^{\infty}}\|w_n\|_{L^{2}}^{2}\right).
		\end{align*}
		%$\eqref{equ lemma1-25}$ 
		From the above inequality and \eqref{W_4-sum}, we deduce that
		\begin{equation}\label{equ lemma1-26}
			{\rm Re}\int_{\mathbb{R}^{n}}P_{\varepsilon}P_{+}(\mathbb{A}\cdot \nabla w_{n})\cdot \overline{P_{\varepsilon}P_{+}w_{n}}\ \d x
			\leqslant c\sum_{j=2}^{n}\|\partial_{x_{j}}\mathbb{A}_{j}\|_{L^{\infty}}\|w_n\|_{L^{2}}^{2},
		\end{equation}
		where the constant $c$ independent of $\varepsilon\in (0,1]$ and $n\in \mathbb{Z}^{+}$.
		
		Integrating \eqref{equ lemma1-11}, then from the estimates for $W_i$ ($1\leqslant i\leqslant 4$), i.e. $\eqref{equ lemma1-12}$-$\eqref{equ lemma1-17}$, $\eqref{equ lemma1-22}$-$\eqref{equ lemma1-26}$, we obtain the desired inequality $\eqref{equ lemma1-27}$. 
		
		Arguing similarly for $P_{-}$, we obtain
		\begin{align}\label{equ lemma1-28}
			\partial_{t}\int_{\mathbb{R}^{n}}|P_{\varepsilon}P_{-}w_{n}|^2\ \d x
			\geqslant& -6c\sum_{j=2}^{n}\|\partial_{x_{j}}\mathbb{A}_{j}\|_{L^{\infty}}\|w_n\|_{L^{2}}^{2}
			-2c\|\ |\mathbb{A}|^{2}\|_{L^{\infty}}\left\|w_{n}\right\|_{L^{2}}^{2} \notag\\
			\ \ &-2c\left\|\mathbb{V}\right\|_{L^{\infty}}\left\|w_{n}\right\|_{L^{2}}^{2}-2c\|\tilde{\mathbb{F}}\|_{L^{2}}\left\|w_{n}\right\|_{L^{2}}
			-\frac{c}{n}\left\|w_{n}\right\|_{L^{2}}^{2},
		\end{align}
		where $c$ is independent of $\varepsilon \in (0,1]$ and $n\in \mathbb{Z}^{+}$.

		\textbf{Step 3: Estimating the $L^{2}$ norm of $w_n$.}
		
		We observe that for each $n\in \mathbb{Z}^{+}$, 
		\begin{equation}\label{equ lemma1-29}
			\sup_{0\leqslant t\leqslant 1}\|w_n(\cdot,t)\|_{L^{2}}<\infty,
		\end{equation}
		which implies the existence of $t_{n}\in [0,1]$ such that
		\begin{equation}\label{equ lemma1-30}
			\frac{1}{2}\sup_{0\leqslant t\leqslant 1}\|w_n(\cdot,t)\|_{L^{2}}^{2}\leqslant \|w_n(\cdot,t_n)\|_{L^{2}}^{2}.
		\end{equation}
		Thus, from $\eqref{equ lemma1-27}$, $\eqref{equ lemma1-28}$-$\eqref{equ lemma1-30}$, we obtain
		\begin{align}\label{equ lemma1-31}
			\frac{1}{2}&\sup_{0\leqslant t\leqslant 1}\|w_n(\cdot,t)\|_{L^{2}}^{2}\leqslant \|w_n(\cdot,t_n)\|_{L^{2}}^{2}\notag\\
			&=\lim_{\varepsilon\rightarrow 0}\left(\|P_{\varepsilon}P_{+}w_n(\cdot,t_n)\|_{L^{2}}^{2}+\|P_{\varepsilon}P_{-}w_n(\cdot,t_n)\|_{L^{2}}^{2}\right)\notag\\
			&=\lim_{\varepsilon\rightarrow 0}\left(\int_{0}^{t_n}\partial_{s}\|P_{\varepsilon}P_{+}w_n(\cdot,s)\|_{L^{2}}^{2}\ \d s
			+\|P_{\varepsilon}P_{+}w_n(\cdot,0)\|_{L^{2}}^{2}\right.\notag\\
			&\left.\ \ \ \ \ \ \ \ \ \ \ \ \ -\int_{t_n}^{1}\partial_{s}\|P_{\varepsilon}P_{-}w_n(\cdot,s)\|_{L^{2}}^{2}\ \d s+\|P_{\varepsilon}P_{-}w_n(\cdot,1)\|_{L^{2}}^{2}\right)\notag\\
			&\leqslant 6c\sum_{j=2}^{n}\int_{0}^{1}\|\partial_{x_{j}}\mathbb{A}_{j}\|_{L^{\infty}}\ \d s\cdot \sup_{0\leqslant t\leqslant 1}\|w_n(\cdot,t)\|_{L^{2}}^{2}
			+2c\int_{0}^{1}\|\ |\mathbb{A}|^{2}\|_{L^{\infty}}\ \d s\cdot \sup_{0\leqslant t\leqslant 1}\left\|w_{n}(\cdot,t)\right\|_{L^{2}}^{2}\notag\\
			&\ \ \ \ +2c\int_{0}^{1}\left\|\mathbb{V}\right\|_{L^{\infty}}\ \d s\cdot \sup_{0\leqslant t\leqslant 1}\|w_n(\cdot,t)\|_{L^{2}}^{2}
			+2c\int_{0}^{1}\left\|\mathbb{F}\right\|_{L^{2}(e^{2\beta x_1}dx)}\ \d s\cdot \sup_{0\leqslant t\leqslant 1}\|w_n(\cdot,t)\|_{L^{2}}\notag\\
			&\ \ \ \ +\frac{c}{n}\sup_{0\leqslant t\leqslant 1}\|w_n(\cdot,t)\|_{L^{2}}^{2}+\|w_n(\cdot,0)\|_{L^{2}}^{2}+\|w_n(\cdot,1)\|_{L^{2}}^{2}.
		\end{align}
		\noindent Taking $n$ large enough such that $\frac{c}{n}<\frac{1}{16}$ and choosing $\varepsilon_0$, $\varepsilon'_{0}$ and $\varepsilon''_{0}$ such that
		\begin{equation*}
			6c\sum_{j=2}^{n}\int_{0}^{1}\|\partial_{x_{j}}\mathbb{A}_{j}\|_{L^{\infty}}\ \d s<\frac{1}{16},\,\qquad 2c\int_{0}^{1}\|\ |\mathbb{A}|^{2}\|_{L^{\infty}}\ \d s<\frac{1}{16},
		\end{equation*}
		and 
		\begin{equation*}
			2c\int_{0}^{1}\left\|\mathbb{V}\right\|_{L^{\infty}}\ \d s<\frac{1}{16}.   
		\end{equation*}
		Under these conditions, \eqref{equ lemma1-31} is transformed into
		\begin{equation*}
			\frac{1}{4}\sup_{0\leqslant t\leqslant 1}\|w_n(\cdot,t)\|_{L^{2}}^{2}\leqslant 16c^{2}\left(\int_{0}^{1}\left\|\mathbb{F}\right\|_{L^{2}(e^{2\beta x_1}dx)}\ \d s\right)^{2}+\|w_n(\cdot,0)\|_{L^{2}}^{2}+\|w_n(\cdot,1)\|_{L^{2}}^{2}.
		\end{equation*}
		Taking the limit as $n\rightarrow\infty$, we obtain the desired inequality \eqref{equ lemma1-1}. This completes the proof  of Lemma \ref{Interpolation}.
	\end{proof}
	
	The next lemma gives an appropriate Carleman estimate for the magnetic Schr\"{o}dinger operator $\i\partial_{t}+\Delta_{A}$.
	
	\begin{lemma}\label{carleman estimate}
		Let $n\geqslant 3$, and let $\varphi:[0,1]\rightarrow \mathbb{R}$ be a smooth function. Define $A=A_k=A_k(x,t):\mathbb{R}^{n+1}\rightarrow \mathbb{R}^{n}$ and denote $B=B_k=B_k(x,t)=DA_k-DA_k^{t}$, where $k$ is a parameter. Assume that there exists some large and fixed constant $k_0$, such that
		\begin{equation}\label{RXB}
			\|x^{t}B\|_{L^{\infty}}\leqslant \Big(\frac{k^m}{a_0}\Big)^{\frac{1}{2p}}M_B, \,\,\,\,\,\mbox{if}\,\,\, k\geqslant k_0,
		\end{equation}
		where $a_0,m$ are the constants in \eqref{thm1.1-6}-\eqref{thm1.1-7}, $M_B$ is defined in \eqref{M_B}. Additionally,  assume that
		\begin{equation}\label{carleman1-1}
			x\cdot \partial_{t}A(x,t)=0,\ \  e_{1}\cdot \partial_{t}A(x,t)=0, \ \  e_{1}^{t}B(x,t)=0,
		\end{equation}
		for any $x\in \mathbb{R}^{n}$ and $e_{1}=(1,0,\ldots,0)$. Then there exists $c=c(n, M_B, \|\varphi'\|_{L^{\infty}}, \|\varphi''\|_{L^{\infty}}), c_1=c_1(n)$ such that the inequality 
		\begin{equation}\label{carleman1-2}
			\frac{\sigma^{\frac{3}{2}}}{R^2}\|e^{\sigma|\frac{x}{R}+\varphi(t)e_{1}|^{2}}g\|_{L^{2}(\d x \ \d t)}\leqslant c_1 \|e^{\sigma|\frac{x}{R}+\varphi(t)e_{1}|^{2}}(\i\partial_t+\Delta_{A})g\|_{L^{2}(\d x \ \d t)}
		\end{equation}
		holds when $\sigma \geqslant cR^{2}$, where
		\begin{equation}\label{RKA0}
			R=2a_0^{-\frac{1}{2p}}k^{\frac{m}{2(2-p)}},    
		\end{equation}
		with $k,a_0,m$ being the parameters mentioned above, and $g\in C_{0}^{\infty}(\mathbb{R}^{n+1})$ with 
		\begin{equation}\label{supp g}
			supp \  g \subset \left\{(x,t):\big|\frac{x}{R}+\varphi(t)e_1\big| \geqslant 1 \right\}.  
		\end{equation}
	\end{lemma}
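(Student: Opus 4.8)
I would prove \eqref{carleman1-2} by the classical conjugation–and–commutator method, the main issue being a careful bookkeeping of the magnetic and time–dependent contributions. Set $y=\tfrac{x}{R}+\varphi(t)e_{1}$, $\phi(x,t)=\sigma|y|^{2}$ and $f=e^{\phi}g\in C_{0}^{\infty}(\mathbb{R}^{n+1})$. Since $\phi$ is real and $\nabla_{A}(e^{-\phi}f)=e^{-\phi}(\nabla_{A}f-(\nabla\phi)f)$, a direct computation gives $e^{\phi}(\i\partial_{t}+\Delta_{A})(e^{-\phi}f)=\mathcal{S}f+\mathcal{A}f$ where, relative to the $L^{2}(\d x\,\d t)$ pairing on compactly supported functions,
\begin{equation*}
\mathcal{S}=\i\partial_{t}+\Delta_{A}+|\nabla\phi|^{2}\ \ (\text{symmetric}),\qquad \mathcal{A}=-2\nabla\phi\cdot\nabla_{A}-\Delta\phi-\i\,\partial_{t}\phi\ \ (\text{skew-symmetric}),
\end{equation*}
using that $\Delta_{A}=\nabla_{A}^{2}$ is symmetric (because $\nabla_{A}$ is skew), that $\i\partial_{t}$ is symmetric after integration by parts in $t$ (no boundary terms, as $g$ is compactly supported in $t$), and that $-\i\partial_{t}\phi$ is multiplication by a purely imaginary function. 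Then
\begin{equation*}
\|e^{\phi}(\i\partial_{t}+\Delta_{A})g\|_{L^{2}}^{2}=\|\mathcal{S}f\|_{L^{2}}^{2}+\|\mathcal{A}f\|_{L^{2}}^{2}+([\mathcal{S},\mathcal{A}]f,f)\ \geqslant\ ([\mathcal{S},\mathcal{A}]f,f),
\end{equation*}
so the whole estimate reduces to showing $([\mathcal{S},\mathcal{A}]f,f)\geqslant 16\,\sigma^{3}R^{-4}\|f\|_{L^{2}}^{2}$ on $\supp g$.

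I would then expand $[\mathcal{S},\mathcal{A}]$ term by term, using that $\phi$ is quadratic in $y$: $\nabla\phi=\tfrac{2\sigma}{R}y$, $D^{2}\phi=\tfrac{2\sigma}{R^{2}}I$, $\Delta\phi=\tfrac{2\sigma n}{R^{2}}$ is constant in $x$ and $t$, $|\nabla\phi|^{2}=\tfrac{4\sigma^{2}}{R^{2}}|y|^{2}$, $\partial_{t}\phi=2\sigma\varphi'(t)y_{1}$, $\partial_{t}^{2}\phi=2\sigma\varphi''(t)y_{1}+2\sigma\varphi'(t)^{2}$, and all $x$-derivatives of $\phi$ of order $\geqslant 3$ — as well as $\Delta_{x}(\partial_{t}\phi)$ — vanish. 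The nonzero brackets are: $[\Delta_{A},-2\nabla\phi\cdot\nabla_{A}]=-\tfrac{8\sigma}{R^{2}}\Delta_{A}$ plus a magnetic operator contributing $\tfrac{8\sigma}{R^{2}}\sum_{k}\int(x^{t}B)_{k}\,\Im\big((\nabla_{A,k}f)\overline{f}\big)$ to the quadratic form, where the identity $y^{t}B=\tfrac{1}{R}x^{t}B$ — valid thanks to $e_{1}^{t}B=0$ — is crucial; $[|\nabla\phi|^{2},-2\nabla\phi\cdot\nabla_{A}]=2(\nabla\phi\cdot\nabla|\nabla\phi|^{2})=\tfrac{32\sigma^{3}}{R^{4}}|y|^{2}$, the main positive term; $[\i\partial_{t},-\i\partial_{t}\phi]=\partial_{t}^{2}\phi$; $[\i\partial_{t},-2\nabla\phi\cdot\nabla_{A}]$ produces a multiplication term $-2\nabla\phi\cdot\partial_{t}A$, which vanishes by $x\cdot\partial_{t}A=e_{1}\cdot\partial_{t}A=0$, leaving only $-\tfrac{4\i\sigma\varphi'}{R}\nabla_{A,1}$; and $[\Delta_{A},-\i\partial_{t}\phi]=-\tfrac{4\i\sigma\varphi'}{R}\nabla_{A,1}$; all remaining brackets vanish. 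With $(-\Delta_{A}f,f)=\|\nabla_{A}f\|_{L^{2}}^{2}$ this gives
\begin{equation*}
([\mathcal{S},\mathcal{A}]f,f)=\frac{32\sigma^{3}}{R^{4}}\int|y|^{2}|f|^{2}+\frac{8\sigma}{R^{2}}\int|\nabla_{A}f|^{2}+\int\partial_{t}^{2}\phi\,|f|^{2}+\mathcal{R},
\end{equation*}
where $\mathcal{R}$ collects the magnetic and first-order terms and obeys $|\mathcal{R}|\leqslant\big(\tfrac{8\sigma}{R^{2}}\|x^{t}B\|_{L^{\infty}}+\tfrac{c(n)\sigma}{R}\|\varphi'\|_{L^{\infty}}\big)\int|f||\nabla_{A}f|$.

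The last step is absorption. By Cauchy--Schwarz, $\tfrac{8\sigma}{R^{2}}\int|\nabla_{A}f|^{2}$ absorbs the two $|\nabla_{A}f|$-pieces of $\mathcal{R}$ at the cost of adding $\lesssim\big(\tfrac{\sigma}{R^{2}}\|x^{t}B\|_{L^{\infty}}^{2}+\sigma\|\varphi'\|_{L^{\infty}}^{2}\big)\int|f|^{2}$, and $\int\partial_{t}^{2}\phi\,|f|^{2}\geqslant-2\sigma\|\varphi''\|_{L^{\infty}}\int|y||f|^{2}$. On $\supp g$ we have $|y|\geqslant 1$, hence $\int|f|^{2}\leqslant\int|y||f|^{2}\leqslant\int|y|^{2}|f|^{2}$, so all these error terms are dominated by $\tfrac12\cdot\tfrac{32\sigma^{3}}{R^{4}}\int|y|^{2}|f|^{2}$ once $\sigma\geqslant cR^{2}$ — with $c$ depending on $n,\|\varphi'\|_{L^{\infty}},\|\varphi''\|_{L^{\infty}},M_{B}$ — provided also $\|x^{t}B\|_{L^{\infty}}^{2}\lesssim\sigma^{2}/R^{2}$. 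This last point is exactly where the scalings \eqref{RXB}, \eqref{RKA0} are used: since $\sigma^{2}/R^{2}\geqslant c^{2}R^{2}=4c^{2}a_{0}^{-1/p}k^{m/(2-p)}$ while $\|x^{t}B\|_{L^{\infty}}^{2}\leqslant a_{0}^{-1/p}k^{m/p}M_{B}^{2}$, and since $\tfrac1p<\tfrac1{2-p}$ precisely because $1<p<2$, the needed inequality $M_{B}^{2}k^{m/p}\leqslant(\text{const})\,k^{m/(2-p)}$ holds for all $k\geqslant k_{0}$. One concludes $([\mathcal{S},\mathcal{A}]f,f)\geqslant 16\sigma^{3}R^{-4}\int|y|^{2}|f|^{2}\geqslant 16\sigma^{3}R^{-4}\|f\|_{L^{2}}^{2}$, which with the two displays above yields \eqref{carleman1-2}.

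The hard part is the magnetic error $\tfrac{\sigma}{R^{2}}(x^{t}B)\cdot\nabla_{A}$ generated by $[\Delta_{A},\nabla\phi\cdot\nabla_{A}]$: it is neither sign-definite nor of lower order in $\nabla_{A}$, so no softness argument absorbs it for free. Controlling it is exactly what forces the algebraic hypotheses $e_{1}^{t}B=0$ and $x\cdot\partial_{t}A=e_{1}\cdot\partial_{t}A=0$ (under which the genuinely dangerous pieces collapse) together with the quantitative balance between $\|x^{t}B\|_{L^{\infty}}$ and the weight parameters in \eqref{RXB}, \eqref{RKA0} — the exponent inequality $\tfrac1p<\tfrac1{2-p}$ for $1<p<2$ being what makes that balance attainable once $k$ is large. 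Everything else is routine once the commutator is organized as above, the support hypothesis \eqref{supp g} serving only to turn $\int|y|^{2}|f|^{2}$ into $\|f\|_{L^{2}}^{2}$.
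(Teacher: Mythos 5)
Your proof is correct and follows essentially the same route as the paper: conjugate by $e^{\phi}$, split the conjugated operator into symmetric and skew-symmetric parts, bound $\|e^{\phi}(\i\partial_t+\Delta_A)g\|^2$ from below by the commutator quadratic form $([\mathcal{S},\mathcal{A}]f,f)$, compute it exactly, kill the $\partial_tA$ term via the first two conditions in \eqref{carleman1-1}, reduce $y^tB$ to $\tfrac1R x^tB$ via $e_1^tB=0$, absorb the magnetic and $\varphi'$ error terms by Cauchy--Schwarz into $\tfrac{8\sigma}{R^2}\int|\nabla_A f|^2$, and finally use the scaling balance between \eqref{RXB} and \eqref{RKA0} together with $1/p<1/(2-p)$ to bound $R^{-2}\|x^tB\|_{L^\infty}^2$ by a $k$-independent constant, so that $\sigma\geqslant cR^2$ with $c=c(n,M_B,\|\varphi'\|_{L^\infty},\|\varphi''\|_{L^\infty})$ dominates everything and the support hypothesis $|y|\geqslant1$ converts $\int|y|^2|f|^2$ into $\|f\|^2$. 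The only cosmetic difference is that the paper observes outright that $\tfrac{2}{R^2}\|x^tB\|_{L^\infty}^2\leqslant\tfrac12 M_B^2$ before choosing $c$, whereas you phrase the same fact as the requirement $\|x^tB\|_{L^\infty}^2\lesssim\sigma^2/R^2$, but the quantitative content is identical.
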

	
	\begin{proof}
		We point out that the analogous result has been proved in \cite[Lemma 2.3]{BCF22}, with the exception that, in our context, the potential $A_k$ and the magnetic field $B_k$ also depend on the parameter $k$. For the sake of completeness, we give the proof here.
		Set $\phi(x,t)=\sigma|\frac{x}{R}+\varphi(t)e_{1}|^{2}$ and $f(x,t):=e^{\phi}g(x,t)$, then
		\begin{equation}\label{carleman1-3}
			e^{\phi}(\i\partial_t+\Delta_{A})g=\mathcal{S}_{\sigma}f+\mathcal{A}_{\sigma}f,
		\end{equation}
		where 
		\begin{align*}
			\mathcal{S}_{\sigma}&=\i\partial_{t}+\Delta_{A}+|\nabla_x \phi|^{2}, \\
			\mathcal{A}_{\sigma}&=-2\nabla_x \phi\cdot\nabla_{A}-\Delta_x \phi-\i\partial_{t}\phi.
		\end{align*}
		Thus, 
		\begin{equation}\label{carleman1-4}
			\mathcal{S}_{\sigma}^{*}=\mathcal{S}_{\sigma},\ \ \ \mathcal{A}_{\sigma}^{*}=-\mathcal{A}_{\sigma},
		\end{equation}
		and
		\begin{align}\label{carleman1-5}
			\|e^{\phi}(\i\partial_t+\Delta_{A})g\|_{L^{2}}^{2}&=\langle\mathcal{S}_{\sigma}f+\mathcal{A}_{\sigma}f, \mathcal{S}_{\sigma}f+\mathcal{A}_{\sigma}f\rangle\notag\\
			&\geqslant \langle(\mathcal{S}_{\sigma}\mathcal{A}_{\sigma}-\mathcal{A}_{\sigma}\mathcal{S}_{\sigma})f,f\rangle=\langle[\mathcal{S}_{\sigma},\mathcal{A}_{\sigma}]f,f\rangle.
		\end{align}
		Note that
		\begin{align*}
			\partial_{t}\phi=&2\sigma(\frac{x_1}{R}+\varphi e_{1})\varphi',\  \partial_{tt}\phi=2\sigma(\frac{x_1}{R}+\varphi)\varphi''+2\sigma \varphi'^{2},\\
			\nabla_{x}\phi=&\frac{2\sigma}{R}(\frac{x}{R}+\varphi e_{1}),\  \nabla_{x}\partial_t\phi=(\frac{2\sigma}{R}\varphi',0,\ldots,0), \\
			\Delta_{x}\phi=&\frac{2d\sigma}{R^{2}},\  \Delta_{x}^{2}\phi=0, \ \Delta_{x}^{2}\partial_{t}\phi=0, \ D^{2}\phi=\frac{2d\sigma}{R^{2}}I,
		\end{align*}
		a calculation shows that
		\begin{align}\label{carleman1-6}
			\langle&[\mathcal{S}_{\sigma},\mathcal{A}_{\sigma}]f,f\rangle \notag\\
			&=\frac{32\sigma^3}{R^4}\int \big|\frac{x}{R}+\varphi e_{1}\big|^{2}|f|^{2}\ \d x\ \d t+\frac{8\sigma}{R^2}\int |\nabla_{A}f|^{2}\ \d x\ \d t \notag\\
			&\ \ \ +2\sigma\int \Big[(\frac{x_1}{R}+\varphi)\varphi''+\varphi'^2\Big]|f|^{2}\ \d x\ \d t
			+\frac{8\sigma}{R}{\rm Im}\int \varphi'(\nabla_{A}\cdot e_1)f\overline{f}\ \d x\ \d t\notag\\
			&\ \ \ -\frac{4\sigma}{R}\int (\frac{x}{R}+\varphi e_{1})\cdot \partial_{t}A |f|^{2}\ \d x\ \d t
			-\frac{8\sigma}{R}{\rm Im}\int f(\frac{x}{R}+\varphi e_{1})^{t}B\cdot \overline{\nabla_{A}f}\ \d x\ \d t.
		\end{align}
		By \eqref{carleman1-1}, the last but one term at right hand side of \eqref{carleman1-6} vanishes. Also, we have
		\begin{align}\label{carleman1-7}
			\frac{8\sigma}{R}{\rm Im}\int \varphi'(\nabla_{A}\cdot e_1)f\overline{f}\ \d x\ \d t
			\geqslant -4\sigma \|\varphi'\|_{L^{\infty}}^{2}\int |f|^{2}\ \d x \ \d t-\frac{4\sigma}{R^2}\int|\nabla_{A}f|^2 \ \d x\ \d t.
		\end{align}
		In addition, by \eqref{carleman1-1}, we have
		\begin{align}\label{carleman1-8}
			-&\frac{8\sigma}{R}{\rm Im}\int f(\frac{x}{R}+\varphi e_{1})^{t}B\cdot \overline{\nabla_{A}f}\ \d x\ \d t
			=-\frac{8\sigma}{R^2}{\rm Im}\int fx^{t}B\cdot \overline{\nabla_{A}f}\ \d x\ \d t\notag\\
			&\geqslant -\frac{4\sigma}{R^2}\|x^{t}B\|_{L^{\infty}}^{2}\int |f|^{2}\ \d x \ \d t-\frac{4\sigma}{R^2}\int|\nabla_{A}f|^2 \ \d x\ \d t.
		\end{align}
		Using the hypothesis on the support of $g$, i.e. \eqref{supp g}, we have
		\begin{align}\label{carleman1-9}
			&\|e^{\phi}(\i\partial_t+\Delta_{A})g\|_{L^{2}}^{2}\notag\\
			&\ \ \geqslant \Big[\frac{32\sigma^3}{R^4}-2\sigma\big(\|\varphi''\|_{L^{\infty}}+\|\varphi'\|_{L^{\infty}}^2+\frac{2}{R^2}k^{\frac{m}{p}}a_0^{-\frac{1}{p}}M_B^2\big)\Big]
			\int \big|\frac{x}{R}+\varphi e_{1}\big||f|^{2}\d x\ \d t\notag\\
			&\ \ \geqslant \Big[\frac{32\sigma^3}{R^4}-2\sigma\big(\|\varphi''\|_{L^{\infty}}+\|\varphi'\|_{L^{\infty}}^2+\frac{1}{2}M_B^2\big)\Big]
			\int \big|\frac{x}{R}+\varphi e_{1}\big||f|^{2}\d x\ \d t,
		\end{align}
		where in the first inequality, we used \eqref{RXB}, \eqref{carleman1-5}-\eqref{carleman1-8}; while in the second inequality, we used \eqref{RKA0}. 
		
		\eqref{carleman1-2} follows from \eqref{carleman1-9} if  $\sigma\geqslant cR^2$ for some large $c=c(n, M_B, \|\varphi'\|_{L^{\infty}}, \|\varphi''\|_{L^{\infty}})$. Therefore the proof of  Lemma \ref{carleman estimate} is complete. 
	\end{proof}

	\section{Proofs of the main results}\label{proof of corollary and theorem}
	In Subsection \ref{proof-thm3.1}, we first establish  a modified version  of Theorem \ref{morgan for p-q}. Then, as a corollary, we  proceed to prove Theorem \ref{morgan for p-q} and Corollary \ref{apply morgan} in Subsections \ref{sec-4-2} and \ref{sec-4-3}, respectively.
	
	\subsection{A variant form of Theorem \ref{morgan for p-q}.}\label{proof-thm3.1}\,
	
	%In this subsection, we  prove the following Theorem.
	\begin{theorem}\label{morgan for p-p}
		Let $n\geqslant 3$ and $1<p<2$. There exists some $M_{p}>0$ such that for any solution  $u\in C([0,1];L^{2}(\mathbb{R}^n))$  of \eqref{Introduction-1} that satisfies \textbf{Condition A} and meets the following criteria for some positive constants $a_0, a_1, a_2>0$:
		\begin{equation}\label{thm1.1-6}
			\int_{\mathbb{R}^{n}} |u(x,0)|^{2}e^{2a_0|x|^p}\ \d x<\infty,
		\end{equation}
		and for any fixed $m\in \R^{+}$,
		\begin{equation}\label{thm1.1-7}
			\int_{\mathbb{R}^{n}} |u(x,1)|^{2}e^{2k^{m}|x|^p}\ \d x<a_2^2e^{2a_1k^{mq/(q-p)}},\,\,\,\,\,\, \mbox{for any} \,\,\, k\in \mathbb{Z^{+}},
		\end{equation}
		where  $1/p+1/q=1$. Additionally, if 
		\begin{equation}\label{thm1.1-8}
			a_0a_1^{p-2}>M_p,
		\end{equation}
		then $u\equiv 0$.
	\end{theorem}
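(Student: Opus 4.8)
The plan is to run the Escauriaza--Kenig--Ponce--Vega scheme for Morgan-type uniqueness \cite{EKPV11}, with the three magnetic tools established above — the linear exponential decay estimate (Lemma \ref{Interpolation}), the weighted commutator estimate (Proposition \ref{bound h}) and the Carleman inequality (Lemma \ref{carleman estimate}) — in place of their free counterparts.

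\textbf{Step 1: gauge fixing and symmetrization.} Since \textbf{Condition A}(i) makes the integrals in \eqref{gauge1-1} finite a.e., Lemma \ref{Cronstrom gauge} applies; the gauge $\varphi$ of \eqref{gauge1-2} being real, $u\mapsto e^{\i\varphi}u$ does not change $|u|$ and hence preserves \eqref{thm1.1-6}--\eqref{thm1.1-7}, while it turns $A$ into $\tilde A=-\Theta$, for which $x\cdot\tilde A=0$ and $x\cdot x^tD\tilde A=0$ by \eqref{gauge1-5}. As $B$ and $x^tB$ are gauge invariant, \eqref{thm1.1-4} still forces $e_1^tB=0$, hence the first component of $\Psi$, and of $\Theta$, to vanish, so also $e_1\cdot\tilde A=0$; and $\tilde A$ is time independent. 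Thus the magnetic data now meet exactly the structural hypotheses of Lemmas \ref{Interpolation} and \ref{carleman estimate}, with $\|x^tB\|_\infty=M_B$ and $\||\Theta|^2\|_\infty\le\varepsilon_0'$, $\|\partial_{x_j}\Theta_j\|_\infty\le\varepsilon_0''$ from \eqref{thm1.1-5}. Next, fixing $m$ and, for each large $k\in\Z^+$, applying the Appell transformation of Lemma \ref{Appell trans} with $\alpha=1$, $\beta=(k^m/a_0)^{1/p}$ produces $v=v_k$ solving an equation of the form \eqref{bounded1-1} — the extra term in \eqref{Appell1-3} vanishing since the Cronstr\"om gauge is preserved under the Appell map — whose potentials $A_k,V_k$ satisfy (using \eqref{gauge1-5}) $x\cdot A_k=x\cdot\partial_tA_k=0$, $e_1\cdot\partial_tA_k=0$, $e_1^tB_k=0$ and obey precisely the size bounds \eqref{MBMV}, the factor $\sqrt{\alpha\beta}/(\alpha(1-t)+\beta t)$ being $\le(k^m/a_0)^{1/(2p)}$. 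A direct computation with \eqref{Appell1-2} symmetrizes the endpoints: with $\theta=(a_0k^m)^{1/2}$ and $mq/(q-p)=m/(2-p)$ one gets $\|e^{\theta|x|^p}v_k(0)\|_{L^2}^2<\infty$ and $\|e^{\theta|x|^p}v_k(1)\|_{L^2}^2<a_2^2e^{2a_1k^{m/(2-p)}}$. Finally, \eqref{thm1.1-2} lets us split $V$ into a piece supported in a fixed ball, absorbed into the source $\mathbb F$ of Lemma \ref{Interpolation} after the (scaling invariant) rescaling, and an $\varepsilon_0$-small remainder playing the role of $\mathbb V$.

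\textbf{Step 2: decay at intermediate times.} Applying Lemma \ref{Interpolation} to $v_k$ with $\lambda=\rho\omega$ for all unit $\omega$ and $\rho>0$, bounding the endpoints via Young's inequality $\rho|x|\le\theta|x|^p+c_p\rho^q\theta^{-q/p}$ and the inner source by a lower-order term, and then converting back to the radial weight by the Legendre duality $\theta|x|^p=\sup_{\mu>0}(\mu|x|-c\mu^q\theta^{1-q})$ — a dyadic decomposition in $|x|$ plus $e^{\mu|x|}\lesssim_n(1+\mu|x|)^{(n-1)/2}\int_{\mathbb S^{n-1}}e^{\mu\omega\cdot x}\,\d\omega$ — while optimizing the $t=1$ growth over $k$, yields hypothesis \eqref{bounded1-4} of Proposition \ref{bound h}, namely $\sup_{0\le t\le1}\|e^{\theta|x|^p}v_k(t)\|_{L^2}\le c^*k^{c_{p,m}}e^{a_1k^{m/(2-p)}}$ (the conversion overhead, of the form $e^{c_pa_0^{-q/(2p)}k^{m/(2-p)}}$, being harmless for $a_0$ large). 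Feeding this into Proposition \ref{bound h} with $h=e^{\tilde\theta\varphi}v_k$, $\tilde\theta=\theta/2$, $\varphi$ as in \eqref{def-varphi}, and using $D^2\varphi\ge p(p-1)|x|^{p-2}I$, $\nabla\varphi\simeq|x|^{p-1}x$ from \eqref{nabla varphi}, gives the quantitative decay
$$\tilde\theta\int_0^1\!\!\int_{\R^n}t(1-t)\,|x|^{p-2}|\nabla_{A_k}h|^2\,\d x\,\d t+\tilde\theta^3\int_0^1\!\!\int_{\R^n}t(1-t)\,|x|^{3p-4}|h|^2\,\d x\,\d t\le c^*k^{c_{p,m}}e^{2a_1k^{m/(2-p)}},$$
which in particular controls $\|v_k\|_{L^2}$ and $\|\nabla_{A_k}v_k\|_{L^2}$ on any shell $|x|\simeq cR$ by $e^{(a_1-c^p2^{p-1})k^{m/(2-p)}}$ up to polynomial factors, where $R=R(k)=2a_0^{-1/(2p)}k^{m/(2(2-p))}$ and $\theta R^p=2^pk^{m/(2-p)}$.

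\textbf{Step 3: Carleman inequality and conclusion.} Suppose $u\not\equiv0$. Fix a smooth path $\varphi=\varphi(t)$ with large constant amplitude $|\varphi|\simeq L$ and a cutoff $\psi$ equal to $1$ on $\{2\le|x/R+\varphi(t)e_1|\le M\}$ and supported in $\{1\le|x/R+\varphi(t)e_1|\le M+1\}$, so $g=\psi v_k$ meets the support condition \eqref{supp g}. Since $(\i\partial_t+\Delta_{A_k})g=\psi V_kv_k+\i(\partial_t\psi)v_k+2\nabla\psi\cdot\nabla_{A_k}v_k+(\Delta\psi)v_k$, with the commutator terms supported in the two shells $|x/R+\varphi e_1|\simeq1$ (where $|x|\simeq LR$) and $|x/R+\varphi e_1|\simeq M$ (where $|x|\simeq MR$), and $|\nabla\psi|\lesssim1/R$, Lemma \ref{carleman estimate} with $\sigma\simeq R^2$ (chosen large enough to absorb the $k$-growing bound on $\|V_k\|_\infty$ via the splitting of Step 1) gives, after dividing out the common weight $e^{4\sigma}$ and using $\sigma\simeq a_0^{-1/p}k^{m/(2-p)}$, an upper bound $\|v_k\|_{L^2(\{2\le|x/R+\varphi e_1|\le M\}\times[\delta,1-\delta])}\le c^*k^{c_{p,m}}e^{E_+k^{m/(2-p)}}$ with $E_+=\max\big(a_1-L^p2^{p-1},\ ((M+1)^2-4)ca_0^{-1/p}+a_1-M^p2^{p-1}\big)$. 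Together with the decay of $v_k$ on the complementary regions (also from Step 2, contributing exponents $a_1-L^p2^p$ and $a_1-M^p2^p$), this bounds $\int_\delta^{1-\delta}\|v_k(\cdot,s)\|_{L^2(\R^n)}^2\,\d s$ by $c^*k^{c_{p,m}}e^{Ek^{m/(2-p)}}$ with $E=\max$ of the four exponents; but since $\beta\to\infty$ as $k\to\infty$, this integral converges to $(1-2\delta)\|u(\cdot,1)\|_{L^2}^2$. Choosing $L$ large first (which makes $a_1-L^p2^{p-1}$ and $a_1-L^p2^p$ negative for any $a_1$) and then requiring $a_0$ large relative to the Carleman constant $c=c(n,M_B,L)$ (to make $((M+1)^2-4)ca_0^{-1/p}-M^p2^{p-1}$ negative), $E$ becomes negative, so $\|u(\cdot,1)\|_{L^2}=0$ and hence $u\equiv0$ by backward uniqueness for $\i\partial_t+\Delta_{A_k}$. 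Tracking the dependence of $L$, $M$, $c$ on the data, the condition $E<0$ reduces to one of the stated shape $a_0a_1^{p-2}>M_p$ with $M_p=M_p(n,p,M_B,\varepsilon_0,\varepsilon_0',\varepsilon_0'')$.

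\textbf{Main obstacle.} The delicate point is this last bookkeeping: the scales — the symmetrized rate $\theta\simeq(a_0k^m)^{1/2}$, the Carleman radius $R\simeq a_0^{-1/(2p)}k^{m/(2(2-p))}$, the parameter $\sigma\simeq R^2$, and the admissible growth $e^{a_1k^{m/(2-p)}}$ — are all engineered, via the choices of $\beta$ and \eqref{RKA0}, so that every exponential factor in Step 3 carries the same power $k^{m/(2-p)}$; making the Carleman gain strictly win while keeping the threshold $M_p$ free of $a_1,a_2$, and controlling how the amplitude $L$ of the path feeds back into the Carleman constant, is where the hard work lies. Two secondary issues: unlike the Hardy case, log-convexity of $t\mapsto\|e^{c|x|^p}u(t)\|_{L^2}$ fails (remark ($\textbf{c}_3$)), so Step 2 has to go through Lemma \ref{Interpolation} and the $\rho|x|$–$|x|^p$ duality rather than a convexity argument; and the hypothesis $n\ge3$ is essential, since the whole construction rests on $e_1^tB=0$, i.e.\ on $B$ having a nontrivial kernel, which is impossible when $n=2$ (remark ($\textbf{c}_2$)).
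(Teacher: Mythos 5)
Your Steps 1 and 2 roughly parallel the paper's reductions (gauge fixing, Appell transform, and the linear exponential decay via Lemma \ref{Interpolation} plus the weighted estimate of Proposition \ref{bound h}). Step 3, however, departs structurally from the paper, and the departure is not merely a matter of bookkeeping — it introduces gaps that your own ``main obstacle'' paragraph hints at but does not resolve.

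First, you are missing the paper's Step 2 entirely: the quantitative lower bound
\[
\int_{|x|<R/2}\int_{3/8}^{5/8}|\tilde u(x,t)|^2\,\d t\,\d x \geqslant \tfrac{C}{10}\,e^{-2M}\|u(0)\|_{L^2}^2,
\]
which is obtained from the Appell scaling, the change of variables $s=s_k(t)$, and the continuity of $\|u(\cdot,s)\|_{L^2}$ near $s=1$. This lower bound on the mass in a ball is the positivity input that drives the contradiction. Your argument instead tries to derive $\|u(\cdot,1)\|_{L^2}=0$ directly from upper bounds, i.e.\ it is not a contradiction scheme at all, yet it opens with ``Suppose $u\not\equiv0$'' and never uses that hypothesis quantitatively. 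Without the mass lower bound there is no obstruction to making the Carleman estimate trivially true (both sides small); the whole leverage of the paper's Step 3 comes from pairing the exponentially large weight $e^{25\sigma/4}$ on the region of guaranteed mass against the bounded weights ($e^{16\sigma}$ and $e^{4\sigma}$) on the two commutator shells.

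Second, the cutoff geometry you propose does not close. You take $\psi$ equal to $1$ on $\{2\leqslant|x/R+\varphi(t)e_1|\leqslant M\}$ and supported in $\{1\leqslant\cdot\leqslant M+1\}$, with a path of \emph{large} amplitude $L$. The commutator supported on the outer shell $|x/R+\varphi e_1|\simeq M$ carries a Carleman weight as large as $e^{(M+1)^2\sigma}$ (quadratic in $M$), while the decay of $v_k$ there, coming from your Step 2, is at best like $e^{-(M-L)^p\cdot c\,k^{m/(2-p)}}$ with $p<2$ (sub-quadratic in $M$). So for any fixed path amplitude, if $M$ is large enough the Carleman loss dominates and your $E_+$ is positive; if $M-L$ is small, the compensating decay exponent $(M-L)^p$ is bounded and cannot absorb the Carleman penalty for an arbitrary given $a_1$. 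The paper avoids this entirely: the second cutoff $\theta_R(x)$, localized at $|x|\simeq R$ (and independent of the translated variable), produces a commutator $F_1$ whose Carleman weight is $\leqslant e^{16\sigma}$ \emph{uniformly}, because the path amplitude is kept \emph{fixed} at $3$, so $|x/R+\varphi(t)e_1|\leqslant 4$ there. This is what lets the final exponential balance reduce to a clean condition of the form $a_0 a_1^{p-2}>M_p$.

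Third, the parameter scheduling is circular. You propose ``choosing $L$ large first'' and ``then requiring $a_0$ large relative to the Carleman constant $c=c(n,M_B,L)$.'' But $a_0$ is a datum of the theorem, not a tunable quantity, and the Carleman constant grows with $\|\varphi'\|_\infty,\|\varphi''\|_\infty$ and hence with $L$; so the requirement on $a_0$ would depend on $L$, which you chose as a function of $a_1$, and the resulting threshold would not be of the stated $a_1$-free form $M_p=M_p(n,p,M_B,\ldots)$. The paper circumvents this feedback by keeping the path amplitude a fixed absolute constant, so that $c$ is fixed once and for all, and the dependence on $a_0,a_1$ enters only through the choice of $R$ (and hence $\sigma=cR^2$) and through the decay rate $\gamma R^p$.

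Finally, a minor point: you invoke ``backward uniqueness for $\i\partial_t+\Delta_{A_k}$'' to pass from $\|u(\cdot,1)\|_{L^2}=0$ to $u\equiv0$. This is unnecessary — and, as a standalone result, not something the paper establishes — since the energy estimate \eqref{pf-of-thm1.1-14} already gives $\|u(1)\|_{L^2}\geqslant e^{-M}\|u(0)\|_{L^2}$, which immediately forces $u(0)=0$, and then forward uniqueness suffices.
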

	\begin{proof}
		%We first reduce to the Cronstr\"{o}m gauge. 
		First, due to  gauge invariance, it is sufficient to prove Theorem \ref{morgan for p-p} for the function $\tilde{u}=e^{\i\varphi}u$, where $\tilde{u}$ is a solution to
		\begin{equation}\label{pf-of-thm1.1-1}
			\partial_{t}\tilde{u}=\i(\Delta_{\tilde{A}}\tilde{u}+V(x,t)\tilde{u}+\tilde{F}(x,t)),
		\end{equation}
		with $\tilde{F}=e^{\i\varphi}F$ and $\varphi$, $\tilde{A}$ defined as in \eqref{gauge1-2}, \eqref{gauge1-3}, respectively.
		By Lemma \ref{Cronstrom gauge}, it follows that
		\begin{equation}\label{pf-of-thm1.1-3/2}
			x\cdot \tilde{A}(x)=0,\,\,\, x\cdot x^{t}D\tilde{A}(x)=0.
		\end{equation}
		Thus we  reduce to the case of the Cronstr\"{o}m gauge.
		Moreover, from \eqref{thm1.1-4} and \eqref{gauge1-3}, we see that
		\begin{equation}\label{pf-of-thm1.1-2}
			e_{1}\cdot \tilde{A}(x)=0, \,\,\,\,\,\mbox{for all} \,\,\, x\in \mathbb{R}^{n}.
		\end{equation}
		
		Next, we  apply Lemma \ref{Appell trans}  (the Appell transformation) to solutions of \eqref{pf-of-thm1.1-1}. To simplify the notation, we will henceforth omit the tildes and denote $\tilde{u}, \tilde{A}$, $\tilde{F}$ by $u, A$, $F$, respectively.
		Using  the Appell transform \eqref{Appell1-2}, it follows from \eqref{Appell1-3} and  $x\cdot A(x)=0$ (due to \eqref{pf-of-thm1.1-3/2}) that $\tilde{u}$ solves
		\begin{equation}\label{pf-of-thm1.1-3}
			\partial_{t}\tilde{u}=\i\bigg(\Delta_{\tilde{A}}\tilde{u}+\tilde{V}(x,t)\tilde{u}+\tilde{F}(x,t)\bigg) \,\,\,\,\mbox{in} \,\,\,\mathbb{R}^{n}\times [0,1],
		\end{equation}
		where
		\begin{align}
			\label{pf-of-thm1.1-4}\tilde{A}(x,t)&=\frac{\sqrt{\alpha\beta}}{\alpha(1-t)+\beta t}A\left(\frac{\sqrt{\alpha \beta}x}{\alpha(1-t)+\beta t}\right),\\
			\label{pf-of-thm1.1-5}\tilde{V}(x,t)&=\frac{\alpha\beta}{(\alpha(1-t)+\beta t)^{2}}V\left(\frac{\sqrt{\alpha \beta}x}{\alpha(1-t)+\beta t}, \frac{\beta t}{\alpha(1-t)+\beta t}\right),\\
			\label{pf-of-thm1.1-6}\tilde{F}(x,t)&=\left(\frac{\sqrt{\alpha\beta}}{\alpha(1-t)+\beta t}\right)^{\frac{n}{2}+2}F\left(\frac{\sqrt{\alpha \beta}x}{\alpha(1-t)+\beta t}, \frac{\beta t}{\alpha(1-t)+\beta t}\right)e^{\frac{(\alpha-\beta)|x|^{2}}{4\i(\alpha(1-t)+\beta t)}}.
		\end{align}
		Observe that $\tilde{A}$ is also in the Cronstr\"{o}m gauge. In addition, by \eqref{pf-of-thm1.1-3/2}, \eqref{pf-of-thm1.1-2} and \eqref{pf-of-thm1.1-4}, we derive that
		\begin{equation}\label{pf-of-thm1.1-7}
			e_{1}\cdot \tilde{A}(x,t)=0, \ \ e_{1}\cdot \partial_{t}\tilde{A}(x,t)=0, \ \ x\cdot \partial_{t}\tilde{A}(x,t)=0, \,\,\, \mbox{ for all} \,\, x\in \mathbb{R}^{n},\, t\in [0,1].
		\end{equation}
		
		Now, we divide the proof into three steps.
		
		\textbf{Step 1: The upper bounds.}
		
		We prove that
		\begin{align}\label{pf-of-thm1.1-33}
			\sup_{[0,1]}\|e^{\gamma |x|^{p}}\tilde{u}(t)\|_{L^{2}}^{2}+\gamma\int_{0}^{1}\int_{\mathbb{R}^{n}}\frac{t(1-t)}{(1+|x|)^{2-p}}|\nabla_{\tilde{A}}\tilde{u}(x,t)|^{2}e^{\gamma |x|^{p}}\ \d x \ \d t
			\leqslant c^{*}k^{c_{p,m}}e^{2a_{1}k^{\frac{m}{2-p}}}
		\end{align}
		holds for $k\geqslant k_{0}$, where $k, k_0,m, c^*,c_{p,m}$ as in \eqref{MBMV}-\eqref{bounded1-4}, $\gamma=(k^m a_0)^{\frac{1}{2}}$, and $a_0, a_1$ are the constants in \eqref{thm1.1-6}-\eqref{thm1.1-7}.
		
		First, we choose $\beta=\beta(k)$. By hypothesis on $u(0)$ and $u(1)$, we have
		\begin{align}
			\label{pf-of-thm1.1-8}\|e^{a_0|y|^p}u(y,0)\|_{L^{2}}&=: A_0,\\
			\label{pf-of-thm1.1-9}\|e^{k^m|y|^p}u(y,1)\|_{L^{2}}&=: A_k\leqslant a_2e^{a_1k^{\frac{mq}{q-p}}}=a_2e^{a_1k^{\frac{m}{2-p}}}.
		\end{align}
		Thus, for $\gamma=\gamma(k)\in [0,\infty)$ to be chosen later, we obtain from  \eqref{Appell1-2} that
		\begin{align}
			\label{pf-of-thm1.1-10}\|e^{\gamma|x|^p}\tilde{u}(x,0)\|_{L^{2}}&=\|e^{\gamma(\frac{\alpha}{\beta})^{\frac{p}{2}}|x|^p}u(x,0)\|_{L^{2}}=A_0,\\
			\label{pf-of-thm1.1-11}\|e^{\gamma|x|^p}\tilde{u}(x,1)\|_{L^{2}}&=\|e^{\gamma(\frac{\beta}{\alpha})^{\frac{p}{2}}|x|^p}u(x,1)\|_{L^{2}}=A_k.
		\end{align}
		To match our assumptions, we now take
		\begin{equation*}
			\gamma\left(\frac{\alpha}{\beta}\right)^{\frac{p}{2}}=a_0 \ \ {\rm and} \ \ \gamma\left(\frac{\beta}{\alpha}\right)^{\frac{p}{2}}=k^m.
		\end{equation*}
		Therefore, we have
		\begin{equation}\label{pf-of-thm1.1-12}
			\alpha=a_0^{\frac{1}{p}},\ \ \beta=k^{\frac{m}{p}}, \ \ \gamma=(k^ma_0)^{\frac{1}{2}}.
		\end{equation}
		From \eqref{pf-of-thm1.1-3}, we get
		\begin{equation}\label{pf-of-thm1.1-13}
			M:=\int_{0}^{1}\|{\rm Im}\ V(t)\|_{L^{\infty}}\ \d t=\int_{0}^{1}\|{\rm Im}\ \tilde{V}(s)\|_{L^{\infty}}\ \d s.
		\end{equation}
		Using energy estimates and taking $F=0$, we obtain
		\begin{equation}\label{pf-of-thm1.1-14}
			e^{-M}\|u(0)\|_{L^{2}}\leqslant\|u(t)\|_{L^{2}}= \|\tilde{u}(s)\|_{L^{2}}\leqslant e^{M}\|u(0)\|_{L^{2}},
		\end{equation}
		where $t, s\in [0,1]$ and $s=\frac{\beta t}{\alpha(1-t)+\beta t}$.
		
		Next, we desire to apply Lemma \ref{Interpolation} to a solution of \eqref{pf-of-thm1.1-3}. Since $0<\alpha<\beta=\beta(k)$ for $k\geqslant k_{0}$, then 
		\begin{equation*}
			\alpha\leqslant \alpha(1-t)+\beta t\leqslant \beta, \ \ \ \mbox{for any} \,\, t\in [0,1].
		\end{equation*}
		Hence, if $y=\frac{\sqrt{\alpha\beta}x}{\alpha(1-t)+\beta t}$, then by \eqref{pf-of-thm1.1-12}, we have
		\begin{equation}\label{pf-of-thm1.1-15}
			\bigg(\frac{a_0}{k^m}\bigg)^{\frac{1}{2p}}|x|=\sqrt{\frac{\alpha}{\beta}}|x|\leqslant |y|\leqslant\sqrt{\frac{\beta}{\alpha}}|x|=\bigg(\frac{k^m}{a_0}\bigg)^{\frac{1}{2p}}|x|.
		\end{equation}
		Thus, for $j=2,\ldots,n$,
		\begin{align}\label{pf-of-thm1.1-16}
			\|\tilde{V}\|_{L^{\infty}}\!\leqslant\! \bigg(\frac{k^m}{a_0}\bigg)^{\frac{1}{p}}\!\|V\|_{L^{\infty}},\,
			\|\ |\tilde{A}|^{2}\|_{L^{\infty}}\!
			\leqslant\! \bigg(\frac{k^m}{a_0}\bigg)^{\frac{1}{p}}\!
			\| \ |A|^{2}\|_{L^{\infty}},\,
			\|\partial_{x_j}\tilde{A}_j\|_{L^{\infty}}\!\leqslant\! \bigg(\frac{k^m}{a_0}\bigg)^{\frac{1}{p}}\!\|\partial_{x_j}A_j\|_{L^{\infty}}.
		\end{align}
		Also, if $s=\frac{\beta t}{\alpha(1-t)+\beta t}$, then $\d t=\frac{(\alpha(1-t)+\beta t)^{2}}{\alpha\beta}\d s$. Consequently,
		\begin{align}\label{pf-of-thm1.1-17}
			\int_{0}^{1}\|\tilde{V}(\cdot,t)\|_{L^{\infty}}\ \d t&=\int_{0}^{1}\|V(\cdot,s)\|_{L^{\infty}}\ \d s,\notag\\
			\int_{0}^{1}\|\ |\tilde{A}|^{2}(\cdot,t)\|_{L^{\infty}}\ \d t&=\|\ |A|^{2}(\cdot)\|_{L^{\infty}},\notag\\
			\int_{0}^{1}\|\partial_{x_j}\tilde{A}_j(\cdot,t)\|_{L^{\infty}}\ \d t&=\|\partial_{x_j}A_j(\cdot)\|_{L^{\infty}},
		\end{align}
		and by \eqref{pf-of-thm1.1-15}, we have
		\begin{equation}\label{pf-of-thm1.1-18}
			\int_{0}^{1}\|\tilde{V}(\cdot,t)\|_{L^{\infty}(|x|\geqslant R)}\ \d t\leqslant \int_{0}^{1}\|V(\cdot,s)\|_{L^{\infty}(|y|>\Gamma)}\ \d s,
		\end{equation}
		where $\Gamma=\Big(\frac{a_0}{k^m}\Big)^{\frac{1}{2p}}R$. Hence, if
		\begin{equation*}
			\int_{0}^{1}\|V(\cdot,s)\|_{L^{\infty}(|y|>\Gamma)}\ \d s\leqslant \varepsilon_{0},
		\end{equation*}
		then
		\begin{equation}\label{pf-of-thm1.1-19}
			\int_{0}^{1}\|\tilde{V}(\cdot,t)\|_{L^{\infty}(|x|\geqslant R)}\ \d t\leqslant \varepsilon_{0}, \ \ \ R=\Gamma\Big(\frac{k^m}{a_0}\Big)^{\frac{1}{2p}}.
		\end{equation}
		We apply Lemma \ref{Interpolation} to \eqref{pf-of-thm1.1-3} with
		\begin{equation}\label{pf-of-thm1.1-20}
			\mathbb{A}(x,t)=\tilde{A}(x,t),\ \mathbb{V}(x,t)=\tilde{V}(x,t)\chi_{(|x|>R)}(x),\ \mathbb{F}(x,t)=\tilde{V}(x,t)\chi_{(|x|\leqslant R)}(x)\tilde{u}(x,t),
		\end{equation}
		and recall the following fact (see e.g. in \cite{EKPV11}): for any $x\in \mathbb{R}^{n}$, $p\in (1,2)$, $\frac{1}{p}+\frac{1}{q}=1$, 
		\begin{equation}\label{pf-of-thm1.1-21}
			e^{\gamma |x|^{p}/p} \simeq \int_{\mathbb{R}^{n}}e^{\gamma^{1/p}\lambda\cdot x-|\lambda|^{q}/q}|\lambda|^{n(q-2)/2}\ \d \lambda.
		\end{equation}
		Substituting $(2p)^{\frac{1}{p}}\gamma^{\frac{1}{p}}\lambda/2$ for $\lambda$ in \eqref{equ lemma1-0}, we obtain
		\begin{align}\label{pf-of-thm1.1-22}
			\sup_{[0,1]}\|e^{(2p)^{\frac{1}{p}}\gamma^{\frac{1}{p}}\lambda\cdot x/2}\tilde{u}(t)\|_{L^{2}}
			\leqslant& c_{n}\Big(\|e^{(2p)^{\frac{1}{p}}\gamma^{\frac{1}{p}}\lambda\cdot x/2}\tilde{u}(0)\|_{L^{2}}
			+\|e^{(2p)^{\frac{1}{p}}\gamma^{\frac{1}{p}}\lambda\cdot x/2}\tilde{u}(1)\|_{L^{2}}\Big)\notag\\
			&+c_{n}\|\tilde{V}\|_{L^{\infty}}\|u(0)\|_{L^{2}}e^{M}e^{|\lambda|(2p)^{\frac{1}{p}}\gamma^{\frac{1}{p}}R/2}.
		\end{align}
		Now we square both sides of \eqref{pf-of-thm1.1-22}, multiply by $e^{-|\lambda|^{q}/q}|\lambda|^{n(q-2)/2}$, and integrate over $\lambda$ and  $x$. Applying Fubini theorem and \eqref{pf-of-thm1.1-21}, we  obtain
		\begin{align}\label{pf-of-thm1.1-23}
			\int_{|x|>1}e^{2\gamma |x|^{p}}|\tilde{u}(x,t)|^{2}\ \d x
			\leqslant& c_{n}\int_{\mathbb{R}^{n}}e^{2\gamma |x|^{p}}\left(|\tilde{u}(x,0)|^{2}+\tilde{u}(x,1)|^{2}\right)\ \d x\notag\\
			&+c_{n}\|u(0)\|_{L^{2}}^{2}\|\tilde{V}\|_{L^{\infty}}^{2}e^{2M}e^{2\gamma R^{p}}.
		\end{align}
		Then we have
		\begin{align}\label{pf-of-thm1.1-24}
			\sup_{[0,1]}\|e^{\gamma |x|^{p}}\tilde{u}(t)\|_{L^{2}}\leqslant&c_{n}\left(\|e^{\gamma |x|^{p}}\tilde{u}(0)\|_{L^{2}}+\|e^{\gamma |x|^{p}}\tilde{u}(1)\|_{L^{2}}\right)\notag\\
			&+c_{n}\|u(0)\|_{L^{2}}e^{M}e^{\gamma}+c_{n}\|u(0)\|_{L^{2}}\Big(\frac{k^m}{a_0}\Big)^{c_p}\|V\|_{L^{\infty}}e^{M}e^{\Gamma^{p}\gamma(\frac{k^m}{a_0})^{\frac{1}{2}}}\notag\\
			\leqslant& c_{n}(A_0+A_k)+c_{n}\|u(0)\|_{L^{2}}e^{M}\bigg(e^{\gamma}+\Big(\frac{k^m}{a_0}\Big)^{c_p}\|V\|_{L^{\infty}}e^{\Gamma^{p}k^m}\bigg)\notag\\
			\leqslant& c^{*}k^{c_{p,m}}e^{a_{1}k^{\frac{m}{2-p}}},\,\,\,\, \mbox{if} \,\,\, k\geqslant k_{0},
		\end{align}
		where in the first inequality, we used \eqref{pf-of-thm1.1-14}, \eqref{pf-of-thm1.1-16} and \eqref{pf-of-thm1.1-23}; in the second inequality, we used \eqref{pf-of-thm1.1-12}; while in the last inequality, we used \eqref{pf-of-thm1.1-8}, \eqref{pf-of-thm1.1-9}. The constants $k, k_0,m, c^*,c_{p,m}$ are given by \eqref{MBMV}-\eqref{bounded1-4}.
		
		It remains to establish bounds for the term $\nabla_{\tilde{A}}\tilde{u}$. We shall use
		\begin{equation}\label{pf-of-thm1.1-25}
			\partial_{t}\tilde{u}=\i\Delta_{\tilde{A}}\tilde{u}+\i F,\ \ \ F=\tilde{V}\tilde{u},
		\end{equation}
		and let
		\begin{equation}\label{pf-of-thm1.1-26}
			f(x,t)=e^{\tilde{\gamma}\varphi}\tilde{u}(x,t).
		\end{equation}
		Substituting $\tilde{A}$, $\tilde{V}$, $\tilde{u}$, $\gamma$, $f$ for $A$, $V$, $v$, $\theta$, $h$ in Proposition \ref{bound h}, we obtain
		\begin{align*}\label{pf-of-thm1.1-27}
			8\tilde{\gamma}&\int_{0}^{1}\int_{\mathbb{R}^{n}}t(1-t)\nabla_{\tilde{A}}f\cdot D^{2}\varphi \overline{\nabla_{\tilde{A}}f}\ \d x\ \d t+8{\tilde{\gamma}}^{3}\int_{0}^{1}\int_{\mathbb{R}^{n}}t(1-t)D^{2}\varphi \nabla \varphi\cdot \nabla \varphi|f|^{2}\ \d x \ \d t\notag\\
			&\leqslant c^{*}k^{c_{p,m}}e^{2a_{1}k^{\frac{m}{2-p}}},\,\,\,\, \mbox{if} \,\,\, k\geqslant k_{0},
		\end{align*}
		where $k, k_0,m, c^*,c_{p,m}$ are given by \eqref{MBMV}-\eqref{bounded1-4}. 
		By \eqref{pf-of-thm1.1-26}, we have
		\begin{equation*}\label{pf-of-thm1.1-28}
			\nabla_{\tilde{A}}f=\tilde{\gamma}\nabla \varphi e^{\tilde{\gamma}\varphi}\tilde{u}+e^{\tilde{\gamma}\varphi}\nabla_{\tilde{A}}{\tilde{u}}.
		\end{equation*}
		In addition,
		\begin{equation*}\label{pf-of-thm1.1-29}
			|2\tilde{\gamma}^{2}D^{2}\varphi\nabla \varphi\cdot \nabla_{\tilde{A}}{\tilde{u}}{\tilde{u}}|
			\leqslant\frac{3}{2}\tilde{\gamma}^{3}D^{2}\varphi\nabla \varphi\cdot \nabla\varphi|{\tilde{u}}|^{2}+\frac{2}{3}\tilde{\gamma}D^{2}\varphi \nabla_{\tilde{A}}{\tilde{u}}\cdot \nabla_{\tilde{A}}{\tilde{u}}.
		\end{equation*}
		Therefore, by integrating the three  facts above, we deduce that
		\begin{align}\label{pf-of-thm1.1-30}
			2\tilde{\gamma}&\int_{0}^{1}\int_{\mathbb{R}^{n}}t(1-t)\nabla_{\tilde{A}}{\tilde{u}}\cdot D^{2}\varphi\overline{\nabla_{\tilde{A}}{\tilde{u}}}e^{\gamma |x|^{p}}\ \d x \ \d t
			+4{\tilde{\gamma}}^{3}\int_{0}^{1}\int_{\mathbb{R}^{n}}t(1-t)D^{2}\varphi\nabla \varphi\cdot \nabla\varphi|{\tilde{u}}|^{2}e^{\gamma |x|^{p}}\ \d x \ \d t\notag\\
			&\leqslant c^{*}k^{c_{p,m}}e^{2a_{1}k^{\frac{m}{2-p}}}, \,\,\,\, \mbox{if} \,\,\, k\geqslant k_{0},
		\end{align}
		where $k, k_0,m, c^*,c_{p,m}$ are given by \eqref{MBMV}-\eqref{bounded1-4}. 
		According to the choice of $\varphi$ in \eqref{def-varphi}, we conclude that for all $x\in \mathbb{R}^{n}$,
		\begin{equation}\label{pf-of-thm1.1-31}
			\nabla_{\tilde{A}}{\tilde{u}}\cdot D^{2}\varphi\overline{\nabla_{\tilde{A}}{\tilde{u}}}\geqslant c_{p}(1+|x|)^{p-2}|\nabla_{\tilde{A}}{\tilde{u}}|^{2}.
		\end{equation}
		Thus, combining \eqref{pf-of-thm1.1-30} with \eqref{pf-of-thm1.1-31}, we obtain
		\begin{align}\label{pf-of-thm1.1-32}
			\gamma \int_{0}^{1}\int_{\mathbb{R}^{n}}t(1-t)\frac{1}{(1+|x|)^{2-p}}|\nabla_{\tilde{A}}\tilde{u}(x,t)|^{2}e^{\gamma |x|^{p}}\ \d x \ \d t
			\leqslant c^{*}k^{c_{p,m}}e^{2a_{1}k^{\frac{m}{2-p}}},\,\,\,\, \mbox{if} \,\,\, k\geqslant k_{0},
		\end{align}
		where $k, k_0,m, c^*,c_{p,m}$ as in \eqref{MBMV}-\eqref{bounded1-4}. Then, by \eqref{pf-of-thm1.1-24} and \eqref{pf-of-thm1.1-32}, we prove the desired inequality \eqref{pf-of-thm1.1-33}.
		
		\textbf{Step 2: The lower bounds.} 
		
		We prove that for sufficiently large $R$,  there is an absolute constant $C>0$ such that
		\begin{align}\label{pf-of-thm1.1-37}
			\int_{|x|<\frac{R}{2}}\int_{\frac{3}{8}}^{\frac{5}{8}}|\tilde{u}(x,t)|^{2}\ \d t \ \d x\geqslant \frac{Ce^{-2M}\|u(0)\|_{L^{2}}^{2}}{10},
		\end{align}
		where $M$ is given in \eqref{pf-of-thm1.1-13}.
		Indeed, by \eqref{Appell1-2}, one has
		\begin{align}\label{pf-of-thm1.1-34}
			&\int_{|x|<\frac{R}{2}}\int_{\frac{3}{8}}^{\frac{5}{8}}|\tilde{u}(x,t)|^{2}\ \d t \ \d x\notag\\
			&\ \ \ =\int_{|x|<\frac{R}{2}}\int_{\frac{3}{8}}^{\frac{5}{8}}\bigg|\Big(\frac{\sqrt{\alpha\beta}}{\alpha(1-t)+\beta t}\Big)^{\frac{n}{2}}
			u\Big(\frac{\sqrt{\alpha \beta}x}{\alpha(1-t)+\beta t}, \frac{\beta t}{\alpha(1-t)+\beta t}\Big)\bigg|^{2}\ \d t \ \d x.
		\end{align}
		
		For the time variable $t$, let 
		$$s:=s_k(t)=\frac{\beta t}{\alpha(1-t)+\beta t},$$
		where $\alpha=a_0^{\frac{1}{p}},\beta=k^{\frac{m}{p}}$ (see \eqref{pf-of-thm1.1-12}). Then in the interval $t\in [\frac{3}{8},\frac{5}{8}]$, one has
		\begin{equation}\label{eq-t-s}
			\d t=\frac{(\alpha(1-t)+\beta t)^{2}}{\alpha\beta}\d s=\frac{\beta}{\alpha}\frac{t^2}{s^2}\d s\sim \frac{\beta}{\alpha}\frac{1}{s^2}\d s,
		\end{equation}
		and
		\begin{equation*}
			s_k\bigg(\frac{5}{8}\bigg)-s_k\bigg(\frac{3}{8}\bigg)
			=\frac{\alpha\beta\Big(\frac{5}{8}-\frac{3}{8}\Big)}{\Big(\frac{5}{8}\alpha+\frac{3}{8}\beta\Big)\Big(\frac{3}{8}\alpha+\frac{5}{8}\beta\Big)}
			\sim \frac{\alpha}{\beta},\quad\,\,k\geqslant c_{n},
		\end{equation*}
		where  $c_{n}>0$ is some  large constant. This implies that $s_k\Big(\frac{5}{8}\Big)>s_k\Big(\frac{3}{8}\Big)$. Furthermore, it follows from the definition of $s_k$ that $s_k\Big(\frac{3}{8}\Big)\rightarrow 1$ as $k\rightarrow \infty$ and $s_k\Big(\frac{3}{8}\Big)\geqslant \frac{1}{2}$ for $k\geqslant c_{n}$.
		
		For the spatial variable $x$, let $y=\frac{\sqrt{\alpha\beta}x}{\alpha(1-t)+\beta t}$,  where $\alpha, \beta$ are the same as in  \eqref{pf-of-thm1.1-12}. Then, for $t\in [\frac{3}{8},\frac{5}{8}]$, one has
		\begin{equation}\label{eq-y-x}
			y\sim \sqrt{\frac{\alpha}{\beta}}|x|=\bigg(\frac{a_0}{k^m}\bigg)^{\frac{1}{2p}}|x|.
		\end{equation}
		Taking
		\begin{equation}\label{pf-of-thm1.1-35}
			R\geqslant 2\mu\bigg(\frac{k^m}{a_0}\bigg)^{\frac{1}{2p}},
		\end{equation}
		where $\mu=\mu_k$ is a constant to be determined later (see \eqref{muk}). Thus there exists an absolute constant $C>0$ such that
		\begin{align}\label{pf-of-thm1.1-36}
			\eqref{pf-of-thm1.1-34}&\geqslant \frac{C\beta}{\alpha}\int_{|y|\leqslant \frac{R}{2}(\frac{a_0}{k^m})^{\frac{1}{2p}}}\int_{s_k(\frac{3}{8})}^{s_k(\frac{5}{8})}|u(y,s)|^{2}\ \frac{\d s\ \d y}{s^{2}} \notag\\
			&\geqslant \frac{C\beta}{\alpha}\int_{|y|\leqslant \mu}\int_{s_k(\frac{3}{8})}^{s_k(\frac{5}{8})}|u(y,s)|^{2}\ \d s\ \d y,
		\end{align}
		where in the first inequality, we used \eqref{eq-t-s}  and \eqref{eq-y-x}; in the second inequality, we used \eqref{pf-of-thm1.1-35}. 
		
		To proceed, we observe the following facts:
		
		\textit{Fact 1.}  $\big[s_k\big(\frac{3}{8}\big),s_k\big(\frac{5}{8}\big)\big]\subset \big[\frac{1}{2},1\big]$. Moreover, since 
		$\lim\limits_{k\rightarrow \infty}s_k\big(\frac{3}{8}\big)=1$, thus given $\varepsilon>0$, there exists $k_{0}>0$ such that for any $k\geqslant k_{0}$, $\big[s_k\big(\frac{3}{8}\big),s_k\big(\frac{5}{8}\big)\big]\subset \big[1-\varepsilon,1\big]$.
		
		\textit{Fact 2.} $\big|s_k\big(\frac{5}{8}\big)-s_k\big(\frac{3}{8}\big)\big|\sim \frac{\alpha}{\beta}$ for $k$ sufficiently large. 
		
		\textit{Fact 3.} By the continuity of $\|u(\cdot,s)\|_{L^{2}}$ at $s=1$, there exist $l_{0}=l_{0}(u)$ such that for any $k\geqslant l_{0}$ and for any $s\in \big[s_k\big(\frac{3}{8}\big),s_k\big(\frac{5}{8}\big)\big]$, 
		\begin{equation*}
			\int_{|y|\leqslant \mu}|u(y,s)|^{2}\ \d y\geqslant\frac{C\|u(s)\|_{L^{2}}^{2}}{10} \geqslant\frac{Ce^{-2M}\|u(0)\|_{L^{2}}^{2}}{10}
		\end{equation*}
		holds provided  $\mu\gg 1$, where in the second inequality above, we used the \eqref{pf-of-thm1.1-14}.
		
		Combining \eqref{pf-of-thm1.1-36} with the three facts mentioned above, we obtain \eqref{pf-of-thm1.1-37}.

		\textbf{Step 3: Conclusion of the proof.}

		We  complete the proof by invoking the Carleman inequality as stated in Lemma \ref{carleman estimate}. To achieve this, 
		we choose $\varphi\in C^{\infty}([0,1])$ and $\theta_{R},\, \theta\in C_{0}^{\infty}(\mathbb{R}^{n})$ to satisfy that $0\leqslant \varphi \leqslant 3$,
		\begin{align*}
			\varphi(t)=
			\begin{cases}
				3,& t\in \big[\frac{3}{8},\frac{5}{8}\big],\\
				0,& t\in \big[0,\frac{1}{4}\big]\cup\big[\frac{3}{4},1\big],
			\end{cases}
		\end{align*}
		\begin{align*}
			\theta_{R}(x)=
			\begin{cases}
				1,& |x|\leqslant R-1,\\
				0,& |x|>R,
			\end{cases}
		\end{align*}
		and
		\begin{align*}
			\theta(x)=
			\begin{cases}
				1,& |x|\geqslant 2,\\
				0,& |x|\leqslant 1.
			\end{cases}
		\end{align*}
		Set
		\begin{equation}\label{pf-of-thm1.1-43}
			g(x,t)=\theta_{R}(x)\theta\Big(\frac{x}{R}+\varphi(t)e_{1}\Big)\tilde{u}(x,t).
		\end{equation}
		Based on the definitions of $\varphi(t), \theta_{R}(x), \theta(x)$, we have the following facts:
		
		\noindent(i) If $|x|\leqslant \frac{R}{2}$, $t\in \big[\frac{3}{8},\frac{5}{8}\big]$, then 
		$\big|\frac{x}{R}+\varphi(t)e_{1}\big|\geqslant \frac{5}{2}>2$.
		Consequently, we have
		\begin{equation}\label{eq-g(x,t)}
			g(x,t)=\tilde{u}(x,t),\,\,\mbox{and}\,\,e^{\sigma |\frac{x}{R}+\varphi(t)e_{1}|^{2}}\geqslant e^{\frac{25}{4}\sigma}.
		\end{equation}
		
		\noindent (ii) If $|x|\geqslant R$ or $t\in \big[0,\frac{1}{4}\big]\cup\big[\frac{3}{4},1\big]$, then $g(x,t)=0$, and 
		$$
		\mbox{supp} \ g\subseteq \Big\{(x,t)\in \R^n\times [0,1]: |x|\leqslant R, \ \frac{1}{4}\leqslant t \leqslant \frac{3}{4}\Big\}\cap \Big\{\big|\frac{x}{R}+\varphi(t)e_{1}\big|\geqslant 1\Big\}.
		$$ 
		Let us define $\xi=\frac{x}{R}+\varphi(t)e_{1}$. With this definition, we write
		\begin{align}\label{pf-of-thm1.1-44}
			(\i\partial_{t}+\Delta_{\tilde{A}}+\tilde{V})g
			&=[\theta(\xi)(2\nabla \theta_{R}(x)\cdot \nabla_{\tilde{A}}\tilde{u}+\tilde{u}\Delta \theta_{R}(x))+2\nabla \theta(\xi)\cdot \nabla \theta_{R}(x)\tilde{u}]\notag\\
			&\ \ \ +\theta_{R}(x)[2R^{-1}\nabla\theta(\xi)\cdot\nabla_{\tilde{A}}\tilde{u}+R^{-2}\tilde{u}\Delta\theta(\xi)+\i\varphi'\partial_{x_1}\theta(\xi)\tilde{u}]\notag\\
			&=:F_1+F_2.
		\end{align}
		In view of the definitions of $\varphi(t), \theta_{R}(x), \theta(x)$, we  derive the following support conditions 
		%with $F_1, F_2$ satisfying the following support conditions:
		\begin{equation*}
			\mbox{supp} \ F_1\subset \Big\{(x,t)\in \mathbb{R}^{n}\times [0,1]:R-1\leqslant |x|\leqslant R,\,\, \frac{1}{32}\leqslant t\leqslant\frac{31}{32}\Big\},
		\end{equation*}
		and
		\begin{equation*}
			\mbox{supp} \ F_2\subset \Big\{(x,t)\in \mathbb{R}^{n}\times [0,1]:1\leqslant\Big|\frac{x}{R}+\varphi(t)e_{1}\Big|\leqslant 2\Big\}.
		\end{equation*}
		
		From now on, we assume that in \eqref{pf-of-thm1.1-35},
		\begin{equation}\label{muk}
			\mu=k^{\frac{m(p-1)}{p(2-p)}},
		\end{equation}
		and take
		\begin{equation}\label{eq-R-s}
			R=2a_0^{-\frac{1}{2p}}k^{\frac{m}{2(2-p)}}\gg k^{\frac{m}{2p}}.
		\end{equation}
		We apply the Carleman inequality \eqref{carleman1-2} and take
		\begin{align}\label{eq-sigma}
			\sigma=cR^{2},
		\end{align}
		where $c$ is the constant appeared in \eqref{carleman1-2}. This, together with the identity \eqref{pf-of-thm1.1-44}, yields that
		\begin{align}\label{pf-of-thm1.1-45}
			R\|e^{\sigma|\frac{x}{R}+\varphi(t)e_{1}|^{2}}g\|_{L^{2}(\d x \ \d t)}
			&\leqslant c_{n} \|e^{\sigma|\frac{x}{R}+\varphi(t)e_{1}|^{2}}(\i\partial_t+\Delta_{\tilde{A}})g\|_{L^{2}(\d x \ \d t)}\notag\\
			&\leqslant c_{n} \|e^{\sigma|\frac{x}{R}+\varphi(t)e_{1}|^{2}}\tilde{V}g\|_{L^{2}(\d x \ \d t)}
			+c_{n} \|e^{\sigma|\frac{x}{R}+\varphi(t)e_{1}|^{2}}F_{1}\|_{L^{2}(\d x \ \d t)}\notag\\
			&\ \ \ +c_{n} \|e^{\sigma|\frac{x}{R}+\varphi(t)e_{1}|^{2}}F_{2}\|_{L^{2}(\d x \ \d t)}\notag\\
			&=:G_{1}+G_{2}+G_{3}.
		\end{align}
		%so we could have taken $\mu\sim k^{m}$, $m=\frac{1}{2(2-p)}-\frac{1}{2p}=\frac{1}{p(2-p)}>0$ and take $\mu$ independent of $u$ when $k\rightarrow \infty$.
		Now we will address each term $G_j$ for $1\leqslant j\leqslant 3$ separately.
		
		For the term $G_1$, when $(x,t)\in \mathbb{R}^{n}\times [\frac{1}{32},\frac{31}{32}]$,  it follows from  \eqref{pf-of-thm1.1-5} that
		$$
		|\tilde{V}(x,t)|\leqslant 32^2 \frac{\alpha}{\beta}\|V\|_{L^{\infty}}= 32^2 \Big(\frac{a_0}{k^m}\Big)^{\frac{1}{p}}\|V\|_{L^{\infty}}.
		$$
		By combining the above inequality with the fact \eqref{eq-R-s}, we deduce that
		\begin{equation*}\label{pf-of-thm1.1-41}
			R\gg \|\tilde{V}\|_{L^{\infty}\big(\mathbb{R}^{n}\times [\frac{1}{32},\frac{31}{32}]\big)}.
		\end{equation*}
		This allows us to absorb the term $G_{1}$ into the left hand side of \eqref{pf-of-thm1.1-45}. 
		
		For the term $G_2$, we introduce the  following quantity:
		\begin{equation}\label{pf-of-thm1.1-42}
			\omega(R)=\Big(\int_{\frac{1}{32}}^{\frac{31}{32}}\int_{R-1\leqslant|x|\leqslant R}(|\tilde{u}(x,t)|^{2}+|\nabla_{\tilde{A}}\tilde{u}(x,t)|^{2})\ \d x\ \d t\Big)^{\frac{1}{2}},
		\end{equation}
		where $R$ satisfies \eqref{eq-R-s}.
		Given that $|\frac{x}{R}+\varphi(t)e_{1}|\leqslant 4$, we obtain
		\begin{equation}\label{eq-G2}
			G_{2}\leqslant c_{n}\omega(R)e^{16\sigma}.
		\end{equation}
		
		For the term $G_{3}$, since $1\leqslant|\frac{x}{R}+\varphi(t)e_{1}|\leqslant 2$, so 
		\begin{equation}\label{eq-G3}
			G_{3}\leqslant c_{n}e^{4\sigma}\|\ |\tilde{u}|+|\nabla_{\tilde{A}}\tilde{u}|\ \|_{L^{2}\big(\{|x|\leqslant R\}\times [\frac{1}{32},\frac{31}{32}]\big)}.
		\end{equation}
		Notice that 
		\begin{equation*}
			\|\tilde{u}(t)\|_{L^{2}}^{2}=\|u(s)\|_{L^{2}}^{2}, \ \ \ s=\frac{\beta t}{\alpha(1-t)+\beta t},
		\end{equation*}
		then by \eqref{pf-of-thm1.1-14}, we get
		\begin{equation*}\label{pf-of-thm1.1-38}
			\int_{|x|<R}\int_{\frac{1}{32}}^{\frac{31}{32}}|\tilde{u}(x,t)|^{2}\ \d t\ \d x\leqslant \|u(0)\|_{L^{2}}^{2}e^{2M}.
		\end{equation*}
		Meanwhile, by \eqref{pf-of-thm1.1-33}, we have
		\begin{align*}\label{pf-of-thm1.1-39}
			\int_{\frac{1}{32}}^{\frac{31}{32}}\int_{|x|\leqslant R}|\nabla_{\tilde{A}}\tilde{u}(x,t)|^{2}\ \d x\ \d t
			&\leqslant c_{n}\int_{\frac{1}{32}}^{\frac{31}{32}}\int_{|x|\leqslant R}t(1-t)\frac{(1+|x|)^{2-p}}{(1+|x|)^{2-p}}e^{\gamma |x|^{p}}|\nabla_{\tilde{A}}\tilde{u}(x,t)|^{2}\ \d x\ \d t\notag\\
			&\leqslant c_{n}\gamma^{-1}R^{2-p}c^{*}k^{c_{p,m}}e^{2a_{1}k^{\frac{m}{2-p}}}\leqslant c^{*}k^{c_{p,m}}e^{2a_{1}k^{\frac{m}{2-p}}},
		\end{align*}
		where we used \eqref{pf-of-thm1.1-12} and \eqref{eq-R-s} in the last inequality.
		Thus, for $k\geqslant k_{0}$ sufficiently large, we have
		\begin{equation*}\label{pf-of-thm1.1-40}
			\int_{|x|<R}\int_{\frac{1}{32}}^{\frac{31}{32}}(|\tilde{u}(x,t)|^{2}+|\nabla_{\tilde{A}}\tilde{u}(x,t)|^{2})\ \d t\ \d x\leqslant c^{*}k^{c_{p,m}}e^{2a_{1}k^{\frac{m}{2-p}}},
		\end{equation*}
		where $k, k_0,m, c^*,c_{p,m}$ are given by \eqref{MBMV}-\eqref{bounded1-4}. 
		This, together with \eqref{eq-G3}, yields
		\begin{equation}\label{eq-G3-3}
			G_{3}\leqslant c^{*}e^{4\sigma}k^{c_{p,m}}e^{2a_{1}k^{\frac{m}{2-p}}}.
		\end{equation}
		
		Regarding the left hand side of \eqref{pf-of-thm1.1-45}, observe that by the fact \eqref{eq-g(x,t)} and the lower bound \eqref{pf-of-thm1.1-37} established in \textbf{Step 2}, we derive that
		\begin{equation*}
			\sqrt{\frac{C}{10}}e^{-M}e^{\frac{25}{4}\sigma}\|u(0)\|_{L^{2}}\leqslant R\|e^{\sigma|\frac{x}{R}+\varphi(t)e_{1}|^{2}}g\|_{L^{2}(\d x \ \d t)}.
		\end{equation*}
		By inserting this as well as the upper bounds \eqref{eq-G2} and \eqref{eq-G3-3} concerning $G_2, G_3$ into \eqref{pf-of-thm1.1-45}, we obtain that 
		\begin{align*}
			\sqrt{\frac{C}{10}}e^{-M}e^{\frac{25}{4}\sigma}\|u(0)\|_{L^{2}}
			%&\leqslant R e^{\frac{25}{4}\sigma}\Big(\int_{|x|<\frac{R}{2}}\int_{\frac{3}{8}}^{\frac{5}{8}}|\tilde{u}(x,t)|^{2}\d t\ \d x\Big)^{\frac{1}{2}}\notag\\
			%&\leqslant c_{n}\omega(R)e^{16\sigma}+c_{n}e^{4\sigma}\|\ |\tilde{u}|+|\nabla_{\tilde{A}}\tilde{u}|\ \|_{L^{2}\big(|x|\leqslant R\times [\frac{1}{32},\frac{31}{32}]\big)}\notag\\
			\leqslant c_{n}\omega(R)e^{16\sigma}+c^{*}k^{c_{p,m}}e^{4\sigma}e^{2a_{1}k^{\frac{m}{2-p}}}.
		\end{align*}
		Take $a_{1}=2ca_0^{-\frac{1}{p}}$, thus we have $\sigma=2a_{1}k^{\frac{m}{2-p}}$ by \eqref{eq-sigma}. Hence, the above inequality leads to the following lower bound of $\omega(R)$:
		\begin{align}\label{pf-of-thm1.1-47}
			\omega(R)\geqslant c_{n}e^{-M}\|u(0)\|_{L^{2}}e^{-10\sigma}=c_{n}e^{-M}\|u(0)\|_{L^{2}}e^{-20a_{1}k^{\frac{m}{2-p}}},
		\end{align}
		for $k\geqslant k_{0}$ sufficiently large.
		
		We now proceed to determine an upper bound for $\omega(R)$ based on the upper bound established in \textbf{Step 1}. Specifically, one has
		\begin{align}\label{pf-of-thm1.1-48}
			\omega^{2}(R)&=\int_{\frac{1}{32}}^{\frac{31}{32}}\int_{R-1\leqslant|x|\leqslant R}(|\tilde{u}(x,t)|^{2}+|\nabla_{\tilde{A}}\tilde{u}(x,t)|^{2})\ \d x\ \d t\notag\\
			%&\leqslant \int_{\frac{1}{32}}^{\frac{31}{32}}\int_{R-1\leqslant|x|\leqslant R}e^{\gamma |x|^{p}}e^{-\gamma |x|^{p}}|\tilde{u}(x,t)|^{2}\d x\ \d t\notag\\
			%&\ \ \ +c_{n}\int_{\frac{1}{32}}^{\frac{31}{32}}\int_{R-1\leqslant|x|\leqslant R}t(1-t)\frac{(1+|x|)^{2-p}}{(1+|x|)^{2-p}}e^{\gamma |x|^{p}}e^{-\gamma |x|^{p}}|\nabla_{\tilde{A}}\tilde{u}(x,t)|^{2}\d x\ \d t\notag\\
			&\leqslant c_{n}e^{-\gamma (R-1)^{p}}\sup_{[0,1]}\|e^{\frac{\gamma}{2} |x|^{p}}\tilde{u}(t)\|_{L^{2}}^{2}\notag\\
			&\ \ \ +c_{n}\gamma^{-1}R^{2-p}e^{-\gamma (R-1)^{p}}\int_{\frac{1}{32}}^{\frac{31}{32}}\int_{R-1\leqslant|x|\leqslant R}\frac{t(1-t)}{(1+|x|)^{2-p}}e^{\gamma |x|^{p}}|\nabla_{\tilde{A}}\tilde{u}(x,t)|^{2}\ \d x\ \d t\notag\\
			&\leqslant c^{*}k^{c_{p,m}}e^{2a_{1}k^{\frac{m}{2-p}}}e^{-\gamma (R-1)^{p}},
		\end{align}
		where in the first inequality, we used \eqref{pf-of-thm1.1-33} and the fact $R-1 \leqslant |x|\leqslant R$; while in the second inequality,  we used \eqref{pf-of-thm1.1-12} and \eqref{eq-R-s}.
		
		Thus  we obtain
		\begin{align}\label{pf-of-thm1.1-49}
			c_{n}\|u(0)\|_{L^{2}}^{2}e^{-2M}
			&\leqslant c^{*}k^{c_{p,m}}e^{42a_{1}k^{\frac{m}{2-p}}-\gamma (R-1)^{p}}\notag\\
			&\leqslant c^{*}k^{c_{p,m}}e^{42a_{1}k^{\frac{m}{2-p}}-2^{\frac{p}{2}}a_{0}^{\frac{1}{2}}
				\left(\frac{a_{1}}{c}\right)^{\frac{p}{2}}k^{\frac{m}{2-p}}+O\big(k^{\frac{m}{2(2-p)}}\big)},
		\end{align}
		where in the first inequality, we used \eqref{pf-of-thm1.1-47}, \eqref{pf-of-thm1.1-48}; in the second inequality, we used $\gamma=(k^m a_0)^{\frac{1}{2}}$ (see \eqref{pf-of-thm1.1-12}),  the identity
		$$\sigma=cR^{2}=2a_{1}k^{\frac{m}{2-p}},\quad \frac{1}{2}+\frac{p}{2(2-p)}=\frac{1}{2-p},$$ 
		as well as the fact that $(R-1)^{p}=R^p+O(R^{p-1})$. Therefore, if 
		$$42a_{1}<2^{\frac{p}{2}}a_{0}^{\frac{1}{2}}\left(\frac{a_{1}}{c}\right)^{\frac{p}{2}},$$ 
		i.e., $a_{0}a_{1}^{p-2}>(42)^{2}\left(\frac{c}{2}\right)^{p}$,
		then  it follows that $u\equiv 0$ by letting $k\rightarrow \infty$ in \eqref{pf-of-thm1.1-49}. Therefore the proof of  Theorem \ref{morgan for p-p} is complete.
		$\hfill\Box$
		
		\subsection{Proof of Theorem \ref{morgan for p-q}}\label{sec-4-2}\,
		
		We  proceed to verify that the conditions of Theorem \ref{morgan for p-p} are fulfilled.
		
		First, under the assumption \eqref{cor1.1-1}, it is evident that condition \eqref{thm1.1-6} is satisfied with $a_0=\frac{\alpha^p}{p}$.
		
		Second, by assumption \eqref{cor1.1-1} again, we have 
		\begin{equation*}
			\int_{\R^n}|u(x,1)|^{2}e^{2b|x|^{q}}\ \d x<\infty,\,\,\,\,b=\frac{\beta^{q}}{q}.
		\end{equation*}
		Observe that
		\begin{equation*}
			\int_{\R^n}|u(x,1)|^{2}e^{2k^m|x|^{p}}\ \d x\leqslant \|f_{pq}(\cdot)\|_{L^{\infty}}\int_{\R^n}|u(x,1)|^{2}e^{2b|x|^{q}}\ \d x.
		\end{equation*}
		where $f_{pq}(x):=e^{2k^m|x|^{p}-2b|x|^{q}}$, $x\in \mathbb{R}^{n}$. A simple computation shows that
		\begin{equation*}
			\|f\|_{L^{\infty}}=f\Big|_{|x|=\big(\frac{k^m p}{bq}\big)^{\frac{1}{q-p}}}=e^{2a_{1}k^{\frac{mq}{q-p}}},
		\end{equation*}
		where
		\begin{equation}\label{pf-of-cor1.2-1}
			a_{1}=\frac{c_{p}}{b^{\frac{p}{q-p}}},\,\,\, c_p=\Big[\Big(\frac{p}{q}\Big)^{\frac{p}{q-p}}-\Big(\frac{p}{q}\Big)^{\frac{q}{q-p}}\Big].
		\end{equation}
		Thus, the condition  \eqref{thm1.1-7} is satisfied with $a_1$ given by \eqref{pf-of-cor1.2-1}.
		
		Third, since we have assumed 
		\begin{equation*}
			\alpha\beta>N_{p},
		\end{equation*}
		this, together with the explicit expressions of $a_0, a_1, b$, as well as the fact $\frac{q(2-p)}{q-p}=1$ given above, yields that 
		%\eqref{pf-of-cor1.2-1} in \eqref{thm1.1-8}, one has
		\begin{equation}\label{pf-of-cor1.2-2}
			a_0a_1^{p-2}>M_p,\,\,\,M_p=p^{-1}q^{-\frac{p}{q}}c_p^{p-2}N_p^{p}.
			%\left(\frac{1}{b^{\frac{p}{q-p}}}\right)^{p-2}>c_{p}^{2-p}M_{p},
		\end{equation}
		%with  $b=\frac{\beta^q}{q}$. , so \eqref{pf-of-cor1.2-2} implies that
		Thus the condition \eqref{thm1.1-8} is also fulfilled.
		
		Therefore, we can apply Theorem \ref{morgan for p-p} and the proof of Theorem \ref{morgan for p-q} is complete.
	\end{proof}

	\subsection{Proof of Corollary \ref{apply morgan}}\label{sec-4-3}\,
	
	Let
	\begin{equation*}
		u(x,t)=u_{1}(x,t)-u_{2}(x,t),
	\end{equation*}
	and
	\begin{equation*}
		V(x,t)=\frac{F(u_1, \bar{u}_1)-F(u_2, \bar{u}_2)}{u_1-u_2},
	\end{equation*}
	Then Theorem \ref{morgan for p-q} yields the result.
	$\hfill\Box$

	\section*{Acknowledgements}
	%We thank the referee for the careful reading and valuable comments.
	S. Huang was supported by the National Natural Science Foundation of China under grants 12171178
	and 12171442.

	%\bibliographystyle{IEEEtran}
	%\bibliography{IEEEabrv,REFS}
	
\end{document}